\documentclass{amsart}
\usepackage{amsmath,amsthm,amssymb,amscd}



\newcommand{\fc}{\mathfrak{c}}

\newcommand{\ga}{\alpha}
\newcommand{\gb}{\beta}
\renewcommand{\gg}{\gamma}
\newcommand{\gd}{\delta}
\newcommand{\gw}{\omega}

\newcommand{\gS}{\Sigma}
\newcommand{\gs}{\sigma}
\newcommand{\eps}{\varepsilon}

\newcommand{\proj}{\mathrm{proj}}

\newcommand{\pprec}{\sqsubseteq}
\newcommand{\trash}{\mathrm{trash}}

\newcommand{\liff}{\leftrightarrow}


\newcommand{\cof}{\mathtt{cof}}

\newcommand{\cantor}{2^\gw}
\newcommand{\baire}{\gw^\gw}
\newcommand{\bintree}{2^{<\gw}}
\newcommand{\gwtree}{\gw^{<\gw}}



\newcommand{\dom}{\mathrm{dom}}

\newcommand{\power}{\mathcal{P}}

\newtheorem{theorem}{Theorem}[section]

\newtheorem{claim}[theorem]{Claim}
\newtheorem{corollary}[theorem]{Corollary}

\theoremstyle{definition}
\newtheorem{definition}[theorem]{Definition}

\newtheorem{question}[theorem]{Question}

\title{Why Y-c.c.}

\author{David Chodounsk{\' y}}
\author{Jind{\v r}ich Zapletal}

\address{Institute of Mathematics of the Academy of Sciences
  of the Czech Republic, \v{Z}itn\'a 25, CZ - 115 67 Praha
  1, Czech Republic}
\thanks{The work of the
first author was supported by the GACR project I 1921-N25 and RVO:
67985840}

\email{chodounsky@math.cas.cz}

\address{Institute of Mathematics of the Academy of Sciences
  of the Czech Republic, \v{Z}itn\'a 25, CZ - 115 67 Praha
  1, Czech Republic and Department of Mathematics University
  of Florida, 358 Little Hall PO Box 118105 Gainesville, FL
  32611-8105, USA}
\thanks{The work of the second author was partially supported by NSF grant DMS 1161078}
\email{zapletal@math.ufl.edu}

\begin{document}
\subjclass[2000]{03E40} 

\keywords{c.c.c.\ partitions, proper forcing, forcing axiom}

\begin{abstract}
We outline a portfolio of novel iterable properties of c.c.c.\ and proper forcing notions and study its most important instantiations, Y-c.c.\ and Y-properness. These properties have interesting consequences for partition-type forcings and anticliques in open graphs.
Using Neeman's side condition method it is possible to obtain PFA variations and prove consistency results for them.
\end{abstract}

\maketitle

\section{Introduction}

A recent work of Yorioka \cite{yorioka:random} implicitly contains the following definition.

\begin{definition}
\label{yccdefinition}
A poset $P$ satisfies \emph{Y-c.c.\ }if for every countable elementary submodel $M\prec H_\theta$ containing $P$ and every condition $q\in P$
there is a filter $F\in M$ on the completion $RO(P)$ such that $\{p\in RO(P)\cap M\colon p\geq q\}\subset F$.
\end{definition}

\noindent We will show that this is a property intermediate between $\gs$-centered and c.c.c.\ whose verification follows typical $\Delta$-system arguments. Y-c.c.\ holds for many natural examples, such as Aronszajn tree specialization forcing (Corollary~\ref{aronszajncorollary}), gap specialization forcing (Corollary~\ref{gapcorollary}), Todorcevic's posets used for the resolution of Horn--Tarski problem (Corollary~\ref{todorceviccorollary}) and other partition style c.c.c.\ posets. Y-c.c.\ has a number of pleasing consequences, such as not adding random reals (Corollary~\ref{covercorollary}) and branches into $\gw_1$-trees (Corollary~\ref{branchcorollary}), preserving uncountable chromatic number of open graphs and not adding uncountable anticliques for them (Corollary~\ref{cliquecorollary}).  It is preserved under the finite support iteration (Theorem~\ref{fsitheorem}), which means that the forcing axiom for Y-c.c.\ posets can be forced via a Y-c.c.\ poset (Corollary~\ref{ymatheorem}). There is also a non-c.c.c.\ variant:

\begin{definition}
A poset $P$ is \emph{Y-proper} if for every countable elementary submodel $M\prec H_\theta$ containing $P$ and every condition $p\in P\cap M$ there is $q\leq p$ (a \emph{Y-master condition}) which is master for $M$ and such that for every $r\leq q$
there is a filter $F\in M$ on the completion $RO(P)$ such that $\{s\in RO(P)\cap M\colon s\geq r\}\subset F$.
\end{definition}

Again, a number of natural posets are Y-proper, such as the Laver forcing (Theorem~\ref{firsttheorem}), the ideal based forcings (Theorem~\ref{idealbasedtheorem}), or the PID forcings (Theorem~\ref{pidtheorem}). Implications of Y-properness are similar to Y-c.c.\ Preservation of Y-properness under the countable support iteration is unclear, but still the Neeman method allows one to produce a Y-proper forcing which forces the forcing axiom YPFA for Y-proper posets--Theorem~\ref{ypfaforcingtheorem}. YPFA does not imply OCA.

Both Y-c.c.\ and Y-properness are instances of a wide-ranging portfolio of iterable forcing properties quite distinct from those
considered so far in the literature. The general scheme for these properties results from replacing the requirement that the sets
$F\subset RO(P)$ be filters with some other regularity demand on $F$. Section~\ref{generalsection} provides the fairly involved general iteration theorems for the resulting concepts.

The present paper owes a great deal to previous work of Yorioka. In particular, most of the examples were known to Yorioka, with proofs that inspired our proofs. Our contribution consists of isolating abstract, axiomatically useful classes of partial orders, and proving general preservation and forcing axiom theorems about them.

We use set theoretic notational standard of \cite{jech:newset}. The forcing notation follows the western convention: $p\geq q$ means that $q$ is stronger or more informative than $p$. If $P$ is a (separative) partial order then $RO(P)$ denotes the completion of $P$,
the unique complete Boolean algebra in which $P$ is dense. If $B$ is a complete Boolean algebra and $\phi$ is a statement of its forcing language, $\|\phi\|$ denotes the Boolean value of $\phi$ in $B$; that is, $\|\phi\|$ is the supremum of all $b\in B$ such that $b\Vdash\phi$. In all arguments, $\theta$ denotes a large enough regular cardinal, and $H_\theta$ the collection of all sets
whose transitive closure has size less than $\theta$. OCA denotes the Open Coloring Axiom
\cite[Section 8]{todorcevic:partitions}, the statement that every open graph on a second countable space is either countably chromatic or else contains an uncountable clique.

\section{Y-c.c.: consequences}
\label{asection}

With a novel property such as Y-c.c., it appears necessary to explore its basic consequences.

\begin{theorem}
\label{ccctheorem}
$\gs$-centeredness implies Y-c.c.\ implies c.c.c.
\end{theorem}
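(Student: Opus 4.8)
The plan is to treat the two implications separately: the first is a direct unwinding of definitions, while the second is a reflection-and-counting argument whose combinatorial core is the real work.

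For $\gs$-centeredness implies Y-c.c., I would fix $M\prec H_\theta$ with $P\in M$ and a condition $q\in P$. By elementarity $M$ contains a decomposition $P=\bigcup_n C_n$ into centered sets, so each $C_n\in M$. Pick $n$ with $q\in C_n$. Since $C_n$ is centered it has the finite intersection property in $RO(P)$, so the filter $F$ it generates there is proper, and $F\in M$ because it is definable from the parameter $C_n\in M$. Finally, if $p\in RO(P)\cap M$ satisfies $p\ge q$, then from $q\in C_n\subseteq F$ and upward closure we obtain $p\in F$; hence $\{p\in RO(P)\cap M:p\ge q\}\subseteq F$, as required. No difficulty arises here.

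For Y-c.c.\ implies c.c.c., I would argue by contradiction. Suppose $A\subseteq P$ is an uncountable antichain, fix a countable $M\prec H_\theta$ with $P,A\in M$, and note that $A\setminus M$ is uncountable. For each $q\in A\setminus M$ apply Y-c.c.\ to obtain a filter $F_q\in M$ on $RO(P)$ with $\{p\in RO(P)\cap M:p\ge q\}\subseteq F_q$. As a sanity check, $F_q$ never meets $A$: for $a\in A\cap M$ we have $q\wedge a=0$, hence $q\le\neg a$ and $\neg a\in F_q$, so $a\in F_q$ would force $0\in F_q$. Since $M$ is countable it contains only countably many filters, so by the pigeonhole principle there is a single filter $F\in M$ and an uncountable set $A'\subseteq A\setminus M$ with $\{p\in RO(P)\cap M:p\ge q\}\subseteq F$ for every $q\in A'$.

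The goal is then to make two elements of $A'$ clash inside $F$: I want distinct $q_1,q_2\in A'$ and an element $b\in RO(P)\cap M$ with $q_1\le b$ and $q_2\le\neg b$, for then $b,\neg b\in F$ yields $b\wedge\neg b=0\in F$, contradicting properness of $F$. The main obstacle is exactly this \emph{separation} step: for a single incompatible pair $q_1\perp q_2$ no such $b\in M$ need exist, so the argument must exploit the uncountability of $A'$. The plan is to assign to each $q\in A'$ its type $\{b\in RO(P)\cap M:q\le b\}$ over the countable Boolean algebra $RO(P)\cap M$, and to show that uncountably many pairwise incompatible conditions cannot all be pairwise \emph{unseparated} by a clopen element coded in $M$; once some $b$ splitting two of them is produced, the contradiction above closes the proof. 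I expect this counting/indistinguishability step, rather than the soft reflection, to be where the essential difficulty lies.
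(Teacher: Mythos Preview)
Your first implication is correct and essentially what the paper does.

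For the second implication you have a genuine gap: you locate the difficulty honestly, but you do not resolve it. The pigeonhole step is a red herring, and the separation claim you aim for --- that among uncountably many $q\in A'$ some pair is split by an element $b\in RO(P)\cap M$ --- is not something a type-counting argument will deliver. The obstruction is exactly what you noticed: there are only countably many parameters in $M$, hence up to $2^{\aleph_0}$ possible types, and $|A'|=\aleph_1$, so cardinality alone cannot force two types to differ. Concretely, the natural candidates $b=\sum B$ for $B\in M\cap\mathcal P(A)$ give $q\le b\iff q\in B$, so you would need some $B\in M$ meeting $A'$ without containing it; but every countable $B\in M$ is a subset of $M$ and hence misses $A'$ entirely, while the co-countable ones contain $A'$, and nothing in your setup rules out that \emph{all} $B\in M\cap\mathcal P(A)$ behave this way with respect to $A'$.

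The paper's argument avoids this by working with a \emph{single} $q\in A\setminus M$ and its witnessing filter $F\in M$, and then pushing the construction back into $M$. Define $G=\{B\subseteq A:\sum B\in F\}$; since $A,F\in M$, also $G\in M$. For $B\in\mathcal P(A)\cap M$ one checks $B\in G\iff q\in B$: the forward direction is immediate since $q\le\sum B$, and if both $B$ and $A\setminus B$ were in $G$ then $\sum B$ and $\sum(A\setminus B)$ would be incompatible elements of the filter $F$. Thus $G$, viewed inside $M$, is a nonprincipal ultrafilter on $A$ closed under the countable intersections available in $M$; by elementarity $G$ is a genuine nonprincipal $\sigma$-complete ultrafilter on a set of size $\aleph_1$, which ZFC forbids. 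The point you were missing is that membership in $M$ of the filter $F$ lets you define $G$ \emph{inside} $M$ and then invoke elementarity, rather than trying to compare two external conditions directly.
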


\begin{proof}
Suppose first that a poset $P$ is $\gs$-centered, and fix a covering $\{F_n:n\in\gw\}$ of $RO(P)$ by countably many filters. Let $M\prec H_\theta$ be a countable elementary submodel containing $P, F_n$ for every $n\in\gw$.
and let $q\in P$ be an arbitrary condition. There must be $n\in\gw$ such that $q\in F_n$; clearly, the filter $F_n\in M$ has the properties required in Y-c.c. The Y-c.c.\ of $P$ has been verified.

The other implication is more involved. 
Suppose for contradiction that $P$ is a Y-c.c.\ poset and $A\subset P$ is an antichain of size $\aleph_1$. Let $M\prec H_\theta$ be a countable
elementary submodel containing $P, A$. Let $q\in A\setminus M$ be any element and let $F\in M$
be the filter guaranteed by Y-c.c.\  Let $G=\{B\subset A\colon \sum B\in F\}$; so $G\in M$ is a collection
of subsets of $A$.

\begin{claim}
For every set $B\subset A$ in $M$, $B\in G\liff q\in B$.
\end{claim}

\begin{proof}
Suppose that $q\in B$. Then $\sum B\geq q$ is an element of $M$ which must belong to $F$ by the choice of $F$,
and so $B\in G$.

 Suppose that $q\notin B$ and for contradiction assume that $B\in G$. By the previous paragraph, $A\setminus B\in G$,
and so both $\sum B$ and $\sum (A\setminus B)$ belong to the filter $F$. This is a contradiction--the conjunction of the two is zero since $A$ is an antichain.
\end{proof}

We can now argue that $G$ is a nonprincipal $\gs$-complete ultrafilter on $A$. By the elementarity of $M$, it is enough to show
that $G$ is closed under countable intersections in $M$, and that for every set $B\subset A$ in $M$, exactly one of $B\in G$, $A\setminus B\in G$. Both of these statements follow immediately from the claim.

However, in ZFC there are no nonprincipal countably complete ultrafilters on sets of size $\aleph_1$, the final contradiction.
\end{proof}

An important feature of Y-c.c.\ posets is their interaction with open graphs. This is encapsulated in the following theorem.

\begin{theorem}
\label{cliquetheorem}
Suppose that $X$ is a second countable topological space and $H\subset X^\gw$ is an open set. If $P$ is Y-c.c.
then  every $H$-anticlique in the extension is covered by a ground model countable set of $H$-anticliques.
\end{theorem}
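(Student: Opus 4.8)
The plan is to reduce the topological statement about $\dot A$ to a purely combinatorial statement about the \emph{trace} of $\dot A$ on a fixed countable base, and then to read the covering anticliques directly off the Y-c.c.\ filters. Fix a countable base $\{U_n\colon n\in\gw\}$ of $X$. Since $H\subseteq X^\gw$ is open in the product topology, it is a union of basic boxes, each constraining only finitely many coordinates to lie in basic open sets; collecting the index sets occurring in these boxes yields a \emph{ground model} countable family $\mathcal{D}\subseteq[\gw]^{<\aleph_0}$ with the following property: a set $B\subseteq X$ is an $H$-anticlique if and only if its trace $\tau_B=\{n\colon B\cap U_n\neq\emptyset\}$ contains no member of $\mathcal{D}$ (``avoids $\mathcal{D}$''). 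The verification of this equivalence --- realizing a box $B_j\subseteq H$ by a sequence drawn from $B$ once $B$ meets each of the finitely many relevant $U_n$ --- is where openness of $H$ is used; it is routine, modulo minor bookkeeping if one insists that the realizing sequence have distinct entries. For any $\tau\subseteq\gw$ avoiding $\mathcal{D}$ the closed set $A_\tau=X\setminus\bigcup_{n\notin\tau}U_n=\{x\colon\forall n\,(x\in U_n\to n\in\tau)\}$ has trace contained in $\tau$, hence is an $H$-anticlique, and it is the largest set whose trace lies inside $\tau$.

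Next I would fix a countable $M\prec H_\theta$ containing $P,\dot A,X,\{U_n\},\mathcal{D}$, and for each $n$ put $b_n=\|U_n\cap\dot A\neq\emptyset\|\in RO(P)\cap M$. To every filter $F\in M$ on $RO(P)$ I associate the ground model set $\tau_F=\{n\colon b_n\in F\}\in M$ and the ground model anticlique candidate $A_{\tau_F}$. The countable family $\{A_{\tau_F}\colon F\in M\text{ a filter on }RO(P)\}$, which lives in the ground model because $M$ is countable, will be the required cover.

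The crucial step --- and the precise reason the hypothesis is Y-c.c.\ rather than mere c.c.c.\ --- is that every $\tau_F$ \emph{automatically} avoids $\mathcal{D}$. Indeed, since $\dot A$ is forced to be an $H$-anticlique, for each $d\in\mathcal{D}$ it is forced that $\dot A$ does not meet every $U_n$ with $n\in d$ (otherwise those witnesses would realize a box of $H$ inside $\dot A$), so $\prod_{n\in d}b_n=0$; as a filter contains no finite subfamily with zero meet, not all of $\{b_n\colon n\in d\}$ lie in $F$, i.e.\ $d\not\subseteq\tau_F$. Thus each $A_{\tau_F}$ is a genuine $H$-anticlique. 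Note that the naive c.c.c.\ substitute --- replacing $F$ by the set of conditions compatible with a fixed $q$ --- fails exactly here, since compatibility with $q$ is not closed under finite meets and the corresponding trace need not avoid $\mathcal{D}$; it is the filter supplied by Y-c.c.\ that rescues the argument.

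Finally I would verify the covering. Let $G$ be generic and $a\in\dot A[G]$; as $a$ is a point of the ground model space $X$, choose $q\in G$ with $q\Vdash\check a\in\dot A$ and apply Y-c.c.\ to $M$ and $q$ to obtain a filter $F\in M$ with $\{p\in RO(P)\cap M\colon p\geq q\}\subseteq F$. For every $n$ with $a\in U_n$ the membership $\check a\in U_n$ is absolute, so $q\Vdash U_n\cap\dot A\neq\emptyset$, that is $q\leq b_n$; since $b_n\in M$ this forces $b_n\in F$, whence $n\in\tau_F$. Therefore $\{n\colon a\in U_n\}\subseteq\tau_F$, i.e.\ $a\in A_{\tau_F}$, and $A_{\tau_F}$ belongs to our countable ground model family. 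The main obstacle is conceptual and lies in the crucial step above: recognizing that one must pass from ``compatibility below a condition'' to an honest filter so that the combinatorial obstruction $\mathcal{D}$ is respected. A secondary technical point is the clean extraction of $\mathcal{D}$ and the trace characterization from the openness of $H$, together with the fact that the anticliques consist of ground model points, so that absoluteness delivers $q\leq b_n$.
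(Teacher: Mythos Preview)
Your proof is correct and is essentially the paper's argument in combinatorial dress: the paper defines, for each filter $F$, the set $B(\dot A,F)=\{x\in X:\|\check O\cap\dot A\neq 0\|\in F\text{ for every open }O\ni x\}$, which is precisely your $A_{\tau_F}$, proves directly (using openness of $H$ and the filter property, exactly as in your ``crucial step'') that it is an $H$-anticlique, and then shows via Y-c.c.\ that every forced member of $\dot A$ lands in $B(\dot A,F)$ for some $F\in M$. Your trace/$\mathcal{D}$ formalism is just an explicit coordinatization of the same idea; the aside about distinct entries is unnecessary since anticliques are defined via $A^\gw\cap H=\emptyset$ with repetitions allowed.
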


\noindent Here, an $H$-anticlique is just a set $A\subset X$ such that $A^\gw\cap H=0$.

\begin{proof}
Let $X$ be the space and $H\subset X^\gw$ be an open set.  Suppose that $P\Vdash \dot A\subset X$ is an anticlique. 
For every filter $F\subset RO(P)$, let $B(\dot A, F)=\{x\in X:$ for every open neighborhood $O\subset X$ of $x$,
the Boolean value $\|\check O\cap\dot A\neq 0\|$ is in the filter $F\}$.

\begin{claim}
The set $B(\dot A, F)$ is an $H$-anticlique.
\end{claim}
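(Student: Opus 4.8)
The plan is to show directly that no $\gw$-sequence of points of $B(\dot A,F)$ can belong to $H$; if one did, I would force an $\gw$-sequence from $\dot A$ into $H$, contradicting that $\dot A$ is an $H$-anticlique. So fix a sequence $\vec x=\langle x_n\colon n\in\gw\rangle\in B(\dot A,F)^\gw$ and assume toward a contradiction that $\vec x\in H$. The first step uses the topological hypotheses: since $X$ is second countable the product $X^\gw$ is second countable as well, and $H$ is open, so there are a number $k\in\gw$ and open sets $O_0,\dots,O_{k-1}\subseteq X$ with $x_n\in O_n$ for $n<k$ such that the basic open box $O_0\times\dots\times O_{k-1}\times X\times X\times\cdots$ is contained in $H$. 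The point of this reduction is that membership in $H$ has now been localized to finitely many coordinates.

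Next I exploit the definition of $B(\dot A,F)$ together with the filter structure of $F$. For each $n<k$ the set $O_n$ is an open neighborhood of the point $x_n\in B(\dot A,F)$, so by definition the Boolean value $\|\check O_n\cap\dot A\neq 0\|$ belongs to $F$. As $F$ is a filter it is closed under finite meets, so the element $b=\bigwedge_{n<k}\|\check O_n\cap\dot A\neq 0\|$ is again in $F$; since $F$ is a proper filter, $0\notin F$ and hence $b\neq 0$. The condition $b$ forces, for every $n<k$, that $\dot A$ meets $\check O_n$, so below $b$ one may choose (by the maximum principle) names $\dot a_n$ with $b\Vdash\dot a_n\in\dot A\cap\check O_n$. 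Let $\vec a$ be the name for the sequence whose first $k$ entries are $\dot a_0,\dots,\dot a_{k-1}$ and all of whose remaining entries equal $\dot a_0$, which is legitimate because $b$ forces $\dot A$ to be nonempty. Then $b$ forces $\vec a\in\dot A^\gw$, and $b$ also forces $\vec a$ to lie in the box $O_0\times\dots\times O_{k-1}\times X\times\cdots$, whose inclusion in $H$ is a ground-model statement about open sets and is therefore absolute. Thus $b\Vdash\vec a\in\dot A^\gw\cap H$, contradicting $P\Vdash\dot A^\gw\cap H=0$.

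I expect the only genuinely delicate points to be the bookkeeping around the Boolean values: one must read $F$ as a \emph{proper} filter, so that the finite meet $b$ is a nonzero condition, and one must quote the inclusion of the basic box into $H$ as an absolute fact so that it survives into the generic extension. Everything else is routine—second countability supplies the finite box, the filter axioms combine the finitely many intersection facts into a single forcing condition, and the openness of $H$ lets the tail coordinates be filled by a single element of $\dot A$. It is worth noting that this claim makes no use of the Y-c.c.\ of $P$: it holds for an arbitrary proper filter $F\subseteq RO(P)$. The Y-c.c.\ hypothesis of Theorem~\ref{cliquetheorem} will enter only afterward, when the filters supplied by Definition~\ref{yccdefinition} inside a countable elementary submodel $M$ are used to produce the required countably many anticliques of the form $B(\dot A,F)$ covering a given anticlique of the extension.
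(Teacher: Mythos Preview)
Your argument is correct and follows essentially the same route as the paper: assume a sequence from $B(\dot A,F)$ lies in $H$, use openness of $H$ to trap it in a finite box $\prod_{n<k}O_n\times X^\gw$, take the finite meet of the Boolean values $\|\check O_n\cap\dot A\neq 0\|$ in the filter $F$ to get a nonzero condition, and observe that this condition forces $\dot A^\gw$ to meet $H$. Your presentation is more explicit (maximum principle, absoluteness, the observation that Y-c.c.\ is not used here), but the idea is identical; note that second countability of $X$ is not actually needed for this step, since openness of $H$ in the product topology already yields the finite box.
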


\begin{proof}
For contradiction assume that this fails and let $\langle x_n:n\in\gw\rangle\in H$ be a sequence of points in $B(\dot A, F)$.
Use the fact that $H$ is open to find a number $l\in\gw$ and open sets $O_k\subset X$ for $k\in l$ such that $x_k\in O_k$ and $\prod_{k\in l}O_k\times X^\gw\subset H$.
By the definition of the set $B(\dot A, F)$, the Boolean values $\|\check O_k\cap\dot A\neq 0\|$ for $k\in l$ are all in the filter $F$ and have a lower bound $p\in P$. But then, $p\Vdash (\prod_{k\in l}O_k\times X^\gw)\cap \dot A^\gw\neq 0$ so $\dot A$ is not an $H$-anticlique. This is a contradiction.
\end{proof}

Now, let $M\prec H_\theta$ be a countable elementary submodel containing $P, X, H, \dot A$; we claim that $\dot A$ is forced to be covered by 
the anticliques in the model $M$. Suppose that this fails, and let $q\in P$ be a condition and $x\in X$ a point such that
$x$ belongs to no anticliques in the model $M$, and $q\Vdash\check x\in\dot A$. Let $F\subset RO(P)$ be a filter in the model
$M$ containing all elements of $RO(P)\cap M$ weaker than $q$. Since the space $X$ is second countable, it has a basis all of whose
elements belong to the model $M$. For every such basic open set $O\subset X$ containing the point $x$, the Boolean value $\|\check O\cap \dot A\neq 0\|$
is weaker than $q$, and it belongs to the model $M$. Therefore, $x\in B(\dot A, F)$ which an anticlique in the model $M$ by the claim,
a contradiction to the choice of the point $x$.  
\end{proof}

Theorem~\ref{cliquetheorem} has a number of prominent corollaries. The following is immediate:

\begin{corollary}
\label{cliquecorollary}
Let $P$ be a Y-c.c.\ poset. Let $X$ be a second countable space and let $H\subset X^\gw$ be an open set.

\begin{enumerate}
\item If in the extension $X$ is covered by countably many $H$-anticliques, then already in the ground model it is covered by countably many anticliques;
\item if in the extension $H$ has an uncountable anticlique, then $H$ has an uncountable anticlique in the ground model.
\end{enumerate}
\end{corollary}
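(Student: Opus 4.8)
The plan is to derive both statements directly from Theorem~\ref{cliquetheorem}, using in addition the implication Y-c.c.\ $\Rightarrow$ c.c.c.\ from Theorem~\ref{ccctheorem} together with routine absoluteness. In each case I would pass to the generic extension $V[G]$, apply Theorem~\ref{cliquetheorem} to replace the given extension anticlique(s) by ground model anticliques, and then transfer the resulting covering back down to $V$. The reason Theorem~\ref{cliquetheorem} is needed at all is that the anticliques obtained in $V[G]$ are in general genuinely new subsets of $X$, so one cannot reflect them directly; the theorem is exactly what lets one trade a new anticlique for a ground model countable family of anticliques.

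For part (2), I would take an $H$-anticlique $A\subset X$ that is uncountable in $V[G]$ and apply Theorem~\ref{cliquetheorem} to obtain a ground model countable family $\mathcal{C}$ of $H$-anticliques with $A\subset\bigcup\mathcal{C}$. Since $A=\bigcup_{C\in\mathcal{C}}(A\cap C)$ is uncountable while $\mathcal{C}$ is countable, a pigeonhole argument carried out in $V[G]$ yields some $C\in\mathcal{C}$ with $A\cap C$ uncountable, so that $C$ itself is uncountable in $V[G]$. As $C$ is a ground model set and any set countable in $V$ remains countable in $V[G]$, the set $C$ must already be uncountable in $V$; this is the desired uncountable $H$-anticlique in the ground model. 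Note that this half uses only absoluteness of countability and not c.c.c.

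For part (1), I would work in $V[G]$ with a covering $X=\bigcup_n A_n$ by $H$-anticliques, apply Theorem~\ref{cliquetheorem} to each $A_n$ to get ground model countable families $\mathcal{C}_n$ covering $A_n$, and enumerate $\bigcup_n\mathcal{C}_n$ as a sequence $\langle C_k:k\in\gw\rangle$ of ground model $H$-anticliques with $X=\bigcup_k C_k$ in $V[G]$. The crux is to produce a single countable family of ground model anticliques covering $X$ already in $V$, and this is where c.c.c.\ enters: the sequence $\langle C_k:k\rangle$ is a function from $\gw$ into the ground model set of all $H$-anticliques, so by the standard c.c.c.\ range-covering argument its range lies in some ground model countable family $\mathcal{D}$. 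Then $X\subset\bigcup\mathcal{D}$ holds in $V[G]$, and since $X$ and $\bigcup\mathcal{D}$ are both ground model sets this inclusion reflects to $V$, giving a ground model countable covering of $X$ by anticliques. I expect no real obstacle here beyond bookkeeping: the one nontrivial ingredient is Theorem~\ref{cliquetheorem} itself, and the only care required is keeping straight which objects genuinely live in $V$ (the families $\mathcal{C}_n$ and $\mathcal{D}$, and the point set $X$) as opposed to those living only in $V[G]$ (the anticliques $A_n$ and the enumerating sequence), so that absoluteness of $\subset$ and of countability may be applied correctly.
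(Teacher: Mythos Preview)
Your argument is correct and is exactly the kind of derivation the paper has in mind; the corollary is stated there as ``immediate'' from Theorem~\ref{cliquetheorem} with no further proof, and your unpacking via pigeonhole for (2) and the c.c.c.\ covering-of-names trick for (1) is a sound way to cash that out. One small simplification you may note for (1): the proof of Theorem~\ref{cliquetheorem} actually yields a uniform version---a single countable $M\prec H_\theta$ containing the sequence $\langle\dot A_n:n\in\omega\rangle$ already has the property that each $\dot A_n$ is forced to be covered by the anticliques in $M$---so the separate appeal to c.c.c.\ to collect the families $\mathcal{C}_n$ into one ground model $\mathcal{D}$ can be avoided, though your route is perfectly valid.
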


\noindent Thus, Y-c.c.\ posets cannot be used to force an instance of OCA in clopen graphs.

\begin{corollary}
\label{covercorollary}
Let $P$ be a Y-c.c.\ poset. If $X$ is a compact Polish space and $C$ is an $\gw_1$-cover consisting of $G_\gd$-sets, then in the extension $C$ remains an $\gw_1$-cover.
\end{corollary}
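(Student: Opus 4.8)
The plan is to deduce this from Theorem~\ref{cliquetheorem} by coding the covering property of $C$ as a statement about anticliques of a suitable open set. Throughout I read ``$C$ is an $\gw_1$-cover'' as the assertion that every countable subset of $X$ is contained in a single member of $C$, and ``$C$ remains an $\gw_1$-cover'' as the same assertion evaluated, via Borel codes, in the extension. First I would dispose of the easy half: any countable set of \emph{ground model} points of $X$ that appears in the extension is, by the c.c.c.\ of $P$ (Theorem~\ref{ccctheorem}), contained in a ground model countable set, hence in a member of $C$ already chosen in $V$; so the whole difficulty concerns countable sets that contain genuinely new points of $X$.

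The core step is to manufacture a single open set $H\subseteq X^\gw$ whose $H$-anticliques are exactly the subsets of $X$ contained in one member of $C$. Writing each member as $A=\bigcap_m U^A_m$ with $U^A_m$ open, a set $S$ fails to sit inside $A$ precisely when some point of $S$ lands in the $\gs$-compact complement $\bigcup_m(X\setminus U^A_m)$. With such an $H$ in hand, the hypothesis that $C$ is an $\gw_1$-cover says exactly that in $V$ every countable subset of $X$ is an $H$-anticlique, and Theorem~\ref{cliquetheorem} then forces every $H$-anticlique of the extension, in particular every new countable set, to be covered by countably many ground model $H$-anticliques, i.e.\ by countably many members of $C$; a short compactness and category bookkeeping over the layers $U^A_m$ is meant to upgrade ``covered by countably many members'' to ``contained in one member,'' which is what ``remains an $\gw_1$-cover'' demands.

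The hard part is the construction and verification of $H$. The naive choice $H=\{\bar x:\{x_n\}\text{ lies in no member of }C\}$ is \emph{not} open: its complement $\{\bar x:\exists\,A\in C\ \forall n\ x_n\in A\}$ is a union of $G_\gd$ sets, hence only $F_{\gs\gd}$, so the existential quantifier over members ranging over $G_\gd$ sets destroys openness. I would try to repair this by passing to an auxiliary compact second countable space in which the members of $C$ become relatively clopen, using the standard device that realizes a $G_\gd$ set as a closed section of $X\times\gw^\gw$, and running Theorem~\ref{cliquetheorem} there; alternatively, by applying the theorem layer-by-layer to the open sets $U^A_m$ and amalgamating the countably many outcomes with a single choice of member supplied by the $\gw_1$-cover hypothesis. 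A secondary obstacle, which the openness of $H$ is precisely designed to overcome, is that the filter $F$ furnished by Y-c.c.\ only ``sees'' the members and neighbourhoods lying in the elementary submodel $M\prec H_\gq$; transferring membership from a countable dense approximation of the generic set to the full generic set across all the layers $U^A_m$, where a boundary point could escape an open layer, is the delicate point, and is exactly what the finite reduction built into the proof of Theorem~\ref{cliquetheorem} is meant to handle.
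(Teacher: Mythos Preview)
Your proposal has a genuine gap in two linked places.  You want an open $H\subset X^\gw$ whose anticliques are exactly the subsets of $X$ contained in some member of $C$, and you correctly observe that the naive choice is not open; the suggested repairs (an auxiliary space, or a layer-by-layer application to the $U^A_m$) are not worked out and it is not clear either can be made to yield a single open $H$ with the desired anticliques.  More seriously, even granting such an $H$, Theorem~\ref{cliquetheorem} would only tell you that the generic countable set $\{\dot x_n:n\in\gw\}$ is covered by countably many ground-model anticliques $D_m$, each contained in some $A_m\in C$.  You then have $\{\dot x_n\}\subset\bigcup_m A_m$, but nothing forces a single $A_m$ to contain all the $\dot x_n$'s, and $C$ is not assumed closed under countable unions.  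The ``compactness and category bookkeeping'' you invoke does not bridge this: the $D_m$ need not be countable, so you cannot feed their union back into the ground-model $\gw_1$-cover hypothesis.

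The paper's proof sidesteps both problems by passing to the hyperspace $K(X)$ with the Vietoris topology and taking $H\subset K(X)^\gw$ to be the set of sequences $\langle K_n:n\in\gw\rangle$ with $\bigcap_n K_n=0$; this \emph{is} open, by compactness of $X$.  An $H$-anticlique is then a family of compact sets with the finite intersection property, hence with nonempty intersection.  One applies Theorem~\ref{cliquetheorem} to the names $\dot A_n=\{K\in K(X)^V:\dot x_n\in\dot K\}$, obtaining ground-model anticliques $D_m$ covering $\bigcup_n\dot A_n$, and picks a single point $y_m\in\bigcap D_m$ for each $m$.  Now $\{y_m:m\in\gw\}$ is a \emph{countable} ground-model set, so one member $B\in C$ contains it, and the $G_\gd$-ness of $B$ finishes: if some $\dot x_n\notin\dot B$, the compact complement $K$ of an open layer of $B$ would lie in $\dot A_n$, hence in some $D_m$, forcing $y_m\in K$ and contradicting $y_m\in B$.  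The move to $K(X)$ is the key idea your proposal is missing.
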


\noindent Here, a set $C\subset\power(X)$ is an $\gw_1$-cover if every countable subset of $X$ is a subset of one element of $C$. Corollary~\ref{covercorollary} needs to be understood in the context of interpretations of descriptive set theoretic notions in generic extensions: the space $X$ as well as the $G_\gd$ elements of the cover $C$ are naturally interpreted in the $P$-extension as a compact Polish space and its $G_\gd$-subsets again.

\begin{proof}
Let $p\in P$ be a condition and $\dot x_n$ for $n\in\gw$ be names for elements of $X$; we must find a set $B\in C$ and a condition $q\leq p$ such that $q\Vdash\{\dot x_n:n\in\gw\}\subset\dot B$.

\begin{claim}
There is a condition $q\leq p$ and a countable set $\{y_n:n\in\gw\}$ such that for every compact set $K\subset X$,
$$q\Vdash \dot K\cap \{\dot x_n:n\in\gw\}\neq 0\text{ implies }K\cap \{y_n:n\in\gw\}\neq 0.$$
\end{claim}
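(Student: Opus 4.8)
The plan is to run the entire argument inside a single countable elementary submodel and to reuse the set $B(\dot A,F)$ from the proof of Theorem~\ref{cliquetheorem}, now with $\dot A=\{\dot x_n:n\in\gw\}$, letting $\{y_n:n\in\gw\}$ be a countable \emph{dense} subset of $B(\dot A,F)$. First I would fix $M\prec H_\gq$ countable containing $P$, $X$, $p$, the sequence $\langle\dot x_n:n\in\gw\rangle$ and a countable basis $\basis\in M$ of $X$, and apply Y-c.c.\ to the condition $p$ to obtain a filter $F\in M$ on $RO(P)$ with $\{b\in RO(P)\cap M:b\geq p\}\subseteq F$. Since $B(\dot A,F)$ is defined from the parameters $\dot A,F\in M$, it is a closed (hence, by compactness of $X$, compact) subset of $X$ lying in $M$, and I would pick $\{y_n:n\in\gw\}\in M$ dense in it.

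Next I would produce the condition $q\leq p$. The idea is to build $q$ by a fusion carried out inside $M$ which, for each $n$, drives $\dot x_n$ into a decreasing sequence of basic open sets $O^n_0\supseteq O^n_1\supseteq\cdots$ of vanishing diameter, chosen so that the Boolean values $\|\dot A\cap\check O^n_k\neq0\|$ remain in the filter $F$; the filter property of $F$ keeps finitely many such demands simultaneously satisfiable, while compactness lets the diameters shrink to $0$. In the limit each $\dot x_n$ is forced below $q$ into $\bigcap_kO^n_k\subseteq B(\dot A,F)=\overline{\{y_m:m\in\gw\}}$, so that $q\Vdash\dot A\subseteq B(\dot A,F)$.

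With $q$ and $\{y_n\}$ in hand, I would verify the displayed implication in the style of the Claim inside the proof of Theorem~\ref{cliquetheorem}. Fix a compact $K\subseteq X$ and suppose $q\Vdash\dot K\cap\dot A\neq0$. Then some $r\leq q$ forces a particular $\dot x_{n_0}$ into $K$; for every basic $O\in\basis$ disjoint from $K$ we have $\|\dot x_{n_0}\in\check O\|\wedge r=0$, whereas for basic $O$ meeting a neighborhood of the forced value the value $\|\dot A\cap\check O\neq0\|$ is driven into $F$ by the construction of $q$. Feeding these values into the argument of Theorem~\ref{cliquetheorem} exhibits a point of $K$ lying in $B(\dot A,F)$, and compactness of $K$ together with the density of $\{y_n\}$ in $B(\dot A,F)$ should then yield some $y_n\in K$, which is exactly what the Claim asserts.

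The hard part will be precisely this last extraction: upgrading \emph{$K$ meets the closed set $B(\dot A,F)=\overline{\{y_n\}}$} to \emph{$K$ meets the countable set $\{y_n\}$ itself}, and dually guaranteeing that the capturing set stays countable. This is the place where Y-c.c., rather than mere c.c.c., is indispensable: because $F$ is only a filter and need not be prime, at the finite-subcover step one cannot simply select a single basic open whose value lies in $F$, so the fusion defining $q$ must exploit the filter property of $F$ to keep the chosen Boolean values compatible along a branch while compactness forces the limit to be realized arbitrarily close to the dense set $\{y_n\}$. Once this countable capturing is secured, the remaining ingredients---the elementarity bookkeeping, and the absoluteness of membership of a ground model point in a ground model compact set between $V$ and the extension---are routine.
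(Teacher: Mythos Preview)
Your plan has two gaps that cannot be repaired along the lines you sketch.

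First, the ``fusion'' producing $q$ is not available. The poset $P$ is an arbitrary Y-c.c.\ poset: there is no fusion, no $\gs$-closure, and in general no single condition below $p$ deciding infinitely many facts about the $\dot x_n$. Having the Boolean values $\|\dot A\cap\check O^n_k\neq 0\|$ lie in the filter $F$ says nothing about the existence of a condition forcing all of them at once; $F$ is a ground-model filter, not the generic one, and its elements need have no common lower bound. The sentence ``in the limit each $\dot x_n$ is forced below $q$ into $\bigcap_k O^n_k$'' therefore has no content. Second, even granting $q\Vdash\{\dot x_n:n\in\gw\}\subseteq\overline{\{y_m:m\in\gw\}}$, the conclusion you need is strictly stronger. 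From $K\cap\overline{\{y_m\}}\neq 0$ one cannot deduce $K\cap\{y_m\}\neq 0$: take $K$ to be a singleton in $B(\dot A,F)\setminus\{y_m:m\in\gw\}$. This obstruction is purely topological and no forcing hypothesis removes it; you flag it yourself as ``the hard part'' but offer no mechanism.

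The paper sidesteps both problems by passing to the hyperspace. Work in $K(X)$ with the open set $H=\{\langle K_n\rangle\in K(X)^\gw:\bigcap_n K_n=0\}$, and for each $n$ let $\dot A_n$ be the name for $\{K\in K(X)^V:\dot x_n\in\dot K\}$, an $H$-anticlique. A single application of Theorem~\ref{cliquetheorem} yields $q\leq p$ and ground-model $H$-anticliques $D_n$ with $q\Vdash\bigcup_m\dot A_m\subseteq\bigcup_n D_n$. By compactness each $\bigcap D_n$ is nonempty; pick $y_n\in\bigcap D_n$. If some $r\leq q$ forces $\dot x_m\in\dot K$, then $r$ forces $K\in\dot A_m\subseteq\bigcup_n D_n$, so $K\in D_n$ for some $n$ and $y_n\in K$. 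No fusion is used, and the density issue never arises because $K\in D_n$ directly gives $y_n\in K$.
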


\begin{proof}
Consider the set $H\subset (K(X))^\gw$ consisting of all sequences $\langle K_n:n\in\gw\rangle$ such that $\bigcap_nK_n=0$.
A compactness argument shows that if the hyperspace $K(X)$ of compact subsets of $X$ is equipped with the Polish Vietoris topology, the set $H$ is open. For each $n\in\gw$
let $\dot A_n$ be the $P$-name for the collection of compact ground model subsets of $X$ which (whose canonical interpretations) contain the point $\dot x_n$. Clearly, $\dot A_n\subset K(X)^V$ is forced to be an $H$-anticlique. By Theorem~\ref{cliquetheorem},
there is a condition $q\leq p$ and a countable set $\{D_n\colon n\in\gw\}$ of $H$-anticliques such that $q\Vdash\bigcup_n\dot A_n\subset\bigcup_n D_n$. A compactness argument shows that the intersection of each $H$-anticlique is nonempty,
and for each $n\in\gw$ there is a point $y_n\in\bigcap D_n$. It is immediate that the set $\{y_n:n\in\gw\}$ works.
\end{proof}

Pick a condition $q\leq p$ and a set $\{y_n:n\in\gw\}$ as in the claim. Let $B\in C$ be a $G_\gd$-set such that
$\{y_n:n\in\gw\}\subset B$. It will be enough to prove that $q\Vdash\{\dot x_n:n\in\gw\}\subset\dot B$.
Suppose that this fails. As $B$ is $G_\gd$, there must be a ground model open superset of $B$ not containing the set $\{\dot x_n:n\in\gw\}$. Let $K\subset X$ be the compact complement of this open set. Then
$K\cap\{ \dot x_n:n\in\gw\}\neq 0$ while $K\cap \{y_n:n\in\gw\}=0$, a contradiction.
\end{proof}

The corollary has numerous consequences: Y-c.c.\ posets do not add random reals since the $G_\gd$ Lebesgue null sets form an $\gw_1$-cover. Y-c.c.\ posets do not separate gaps of uncountable cofinality, since each such gap induces a natural $\gw_1$-cover of $G_\gd$-sets.

Theorem~\ref{cliquetheorem} did not use the fact that the filters $F$ of Definition~\ref{yccdefinition} come from the model $M$; it was enough to assume that they come from some fixed countable set of filters on $RO(P)\cap M$. The assumption that the filters come from the model $M$ is used in the preservation of Y-c.c.\ under the finite support iteration, as well as in the following two features.

\begin{theorem}
Suppose that $P$ has Y-c.c.\ and $\kappa$ is a cardinal.  For every function $f\in\kappa^\kappa$ in the $P$-extension,
if $f\restriction a$ is in the ground model for every ground model countable set $a$, then $f$ is in the ground model.
\end{theorem}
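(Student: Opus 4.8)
The plan is to show that the set of conditions $r\le p$ which force $\dot f$ to equal a specific ground model function is dense below any condition; density then yields $p\Vdash\dot f\in\check V$. So I would fix $q\le p$, take a countable $M\prec H_\theta$ with $P,\dot f,\kappa,q\in M$, and put $a=M\cap\kappa$, a countable set in the ground model. Since $q$ forces the hypothesis, $q\Vdash\dot f\restriction\check a\in\check V$, so by the truth lemma there are $q_1\le q$ and a ground model function $h\colon a\to\kappa$ with $q_1\Vdash\dot f\restriction\check a=\check h$. Here I would invoke Theorem~\ref{ccctheorem}: as $P$ is c.c.c., every name in $M$ for an ordinal is realized inside $M$, so the values of $h$ lie in $a$ and, for each $\alpha\in a$, the Boolean value $\|\dot f(\check\alpha)=\check{h(\alpha)}\|$ is an element of $M$.

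Next I would feed $q_1$ into the Y-c.c.\ property for the model $M$, obtaining a filter $F\in M$ on $RO(P)$ with $\{s\in RO(P)\cap M\colon s\ge q_1\}\subseteq F$. Define a partial function $g$ on $\kappa$ by setting $g(\alpha)=\beta$ exactly when $\|\dot f(\check\alpha)=\check\beta\|\in F$; since distinct values give disjoint Boolean values and $F$ is a filter, $g$ is well defined, and since $F,\dot f,\kappa\in M$ we have $g\in M$. For $\alpha\in a$ the value $\|\dot f(\check\alpha)=\check{h(\alpha)}\|$ lies in $M$ and is $\ge q_1$, hence belongs to $F$, so $g\restriction a=h$. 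In particular $g$ is defined on all of $M\cap\kappa$, so by elementarity $g$ is total; thus $g\in V$. A further appeal to elementarity, applied to the statement that $\|\dot f(\check\alpha)=\check{g(\alpha)}\|\in F$ holds for every $\alpha$ in the domain of $g$, upgrades this to \emph{all} $\alpha<\kappa$.

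The main obstacle is the final step: passing from the local agreement $q_1\Vdash\dot f\restriction\check a=\check g\restriction\check a$ to the global statement $q_1\Vdash\dot f=\check g$ (equivalently, to $f\in V$). This is exactly where the clause $F\in M$ of Definition~\ref{yccdefinition} must be used in full, beyond the mere c.c.c.\ extracted above -- indeed plain c.c.c.\ forcings such as Suslin tree forcing do add such \emph{fresh} functions. The filter alone is insufficient, since it is closed only under finite meets and the crucial relation ``$\ge q_1$'' is not reflected by $M$ (as $q_1\notin M$), so $\|\dot f=\check g\|=\prod_\alpha\|\dot f(\check\alpha)=\check{g(\alpha)}\|$ need not lie above $q_1$. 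I expect to close the gap by recasting the situation as a branch problem: using c.c.c.\ the ground model tree $T$ of all forced approximations $\dot f\restriction\check\beta$ has countable levels, so a counterexample $f\notin V$ would be a new cofinal branch through an $\gw_1$-tree (after reducing to the least $\gamma\le\kappa$ with $f\restriction\gamma\notin V$ and passing to a cofinal sequence of order type $\mathrm{cf}(\gamma)$). Such a branch can be encoded as an $H$-anticlique for a suitable open $H$ on a second countable space assembled from the countable levels, and Theorem~\ref{cliquetheorem} then forces the branch to be covered by ground model anticliques, hence to lie in $V$ -- the desired contradiction. The delicate points I anticipate are the cofinality bookkeeping for singular or countably cofinal $\gamma$, and the verification that the approximation tree genuinely has countable levels and that the associated branch-reading is open.
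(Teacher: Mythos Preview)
Your first two paragraphs are correct and essentially reproduce the paper's argument: fix the model $M$, find a condition $q_1$ below it deciding $\dot f$ on $a=M\cap\kappa$, apply Y-c.c.\ to $q_1$ and $M$ to obtain $F\in M$, and define $g(\alpha)=\beta$ iff $\|\dot f(\check\alpha)=\check\beta\|\in F$. You correctly get $g\in M$ total with $q_1\Vdash\dot f\restriction\check a=\check g\restriction\check a$.

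The perceived ``main obstacle'' is not one. You already have the ingredients to finish in one line, exactly as the paper does (``by the elementarity of $M$ and the c.c.c.\ of $P$''). Here is the point you are missing: since $\dot f,g\in M$, the Boolean value
\[
b=\|\dot f\neq\check g\|=\sup_{\alpha<\kappa}\|\dot f(\check\alpha)\neq\check g(\check\alpha)\|
\]
lies in $M$. By c.c.c., every supremum in $RO(P)$ is attained on a countable subfamily, so inside $M$ there is a countable $S\subset\kappa$ with $b=\sup_{\alpha\in S}\|\dot f(\check\alpha)\neq\check g(\check\alpha)\|$. As $S\in M$ is countable, $S\subset M\cap\kappa=a$. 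But for every $\alpha\in a$ you already know $q_1\le\|\dot f(\check\alpha)=\check g(\check\alpha)\|$, hence $q_1$ is incompatible with each term of the sup, hence with $b$. Thus $q_1\le\|\dot f=\check g\|$, and you are done. The filter $F$ has already done its job (producing a single $g\in M$); the last step uses only c.c.c.\ together with the crucial fact that $g$ lives in $M$.

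Your proposed detour through tree branches and Theorem~\ref{cliquetheorem} is unnecessary and in fact runs backwards relative to the paper: Corollary~\ref{branchcorollary} is \emph{derived from} the present theorem, not used to prove it. Moreover, the reduction you sketch is not straightforward---nothing forces the minimal $\gamma$ with $f\restriction\gamma\notin V$ to have cofinality $\omega_1$, so you do not automatically land in an $\omega_1$-tree.
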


\begin{proof}
Let $p\in P$ be a condition and $\dot f$ a name such that $p\Vdash\dot f\in\kappa^\kappa$ is a function
such that $\dot f\restriction\check a\in V$ for every countable set $a\in V$. We must find a condition $q\leq p$
and a function $g\in\kappa^\kappa$ such that $q\Vdash\check g=\dot f$.

Let $M$ be a countable elementary submodel of $H_\theta$ containing $P, p, \dot f, \kappa$. Let $q\leq p$ be a condition
deciding all values of $\dot f(\ga)$ for $\ga\in\kappa\cap M$. We will show that there is a function $g$ in the model $M$
such that $q\Vdash \dot f=\check g$.

Let $F\subset RO(P)$ be a filter in the model $M$ obtained by an application of Y-c.c.\ to $M,q$. 
By the c.c.c.\ of $P$, for every ordinal $\ga\in\kappa\cap M$, the ordinal $\gb$ such that $q\Vdash\dot f(\check\ga)=\check \gb$
must be in the model $M$. Therefore, the Boolean value $\|\dot f(\check\ga)=\check\gb\|$ is in the model $M$, it is weaker than $q$, and therefore belongs to the filter $F$.
Let $g=\{\langle\ga, \gb\rangle\in\kappa\times\kappa\colon \|\dot f(\check \ga)=\check\gb\|\in F\}$. Since $F$ is a filter, this is a partial function from $\kappa$ to $\kappa$. By the elementarity of the model $M$, $g\in M$.
We just argued that $g$ is defined for every ordinal $\ga\in M$, and so by the elementarity of the model $M$, $g$ is a total function from $\kappa$ to $\kappa$. We have also argued that $q\Vdash\dot f\restriction M=\check g\restriction M$, and by the elementarity of $M$ and the c.c.c.\ of $P$, $q\Vdash\dot f=\check g$ as desired.
\end{proof}

\begin{corollary}
\label{branchcorollary}
If $P$ has Y-c.c., then $P$ does not add any new cofinal branches into $\gw_1$-trees.
\end{corollary}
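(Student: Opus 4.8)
The plan is to code a cofinal branch as a function on $\gw_1$ and invoke the preceding theorem with $\kappa=\gw_1$. Work in a $P$-extension $V[G]$, let $T\in V$ be an $\gw_1$-tree, and let $b\in V[G]$ be a cofinal branch through $T$; the goal is to show $b\in V$. Since Y-c.c.\ implies c.c.c.\ (Theorem~\ref{ccctheorem}), $\gw_1$ is preserved, so $b$ meets every level $T_\ga$ for $\ga<\gw_1$, and we may write $b(\ga)$ for its unique node at level $\ga$. Fix in $V$ bijections $e_\ga\colon T_\ga\to\gw$ (each level being countable) and set $f(\ga)=e_\ga(b(\ga))\in\gw\subseteq\gw_1$, so that $f\in\gw_1^{\gw_1}$ in $V[G]$. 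Because the $e_\ga$ lie in $V$, the branch $b$ is recoverable from $f$ inside $V$ via the inverses $e_\ga^{-1}$, so it suffices to prove $f\in V$.

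To apply the preceding theorem I must verify its hypothesis: $f\restriction a\in V$ for every ground model countable set $a\subseteq\gw_1$. Here the essential point is the \emph{coherence} of branches. For countable $a\in V$, regularity of $\gw_1$ in $V$ gives some $\gg<\gw_1$ strictly above $\sup(a)$, and the initial segment $b\restriction\gg$ is completely determined by the single node $b(\gg)\in T_\gg$: for $\ga<\gg$, the value $b(\ga)$ is just the predecessor of $b(\gg)$ at level $\ga$, computed from $T$. Since $T_\gg$ is a ground model set, whatever value $b(\gg)$ takes is already an element of $V$, and the predecessor function of $T$ belongs to $V$; hence $b\restriction\gg$, and a fortiori $f\restriction a$, lies in $V$.

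With the hypothesis satisfied, the preceding theorem yields $f\in V$, and therefore $b\in V$, as required. I expect the only real content to lie in the coherence observation of the second paragraph: a cofinal branch can fail to belong to $V$ as a whole $\gw_1$-length sequence, even though each of its countable initial segments is pinned down by a single ground model node and so carries no genuinely new information. This is precisely the discrepancy between being new as a function on $\gw_1$ and being new on some ground model countable set that the preceding theorem is designed to exploit.
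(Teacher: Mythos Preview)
Your proof is correct and is exactly the intended derivation: the paper states the corollary without proof immediately after the theorem on functions $f\in\kappa^\kappa$, and your argument---coding the branch as a function $f\colon\gw_1\to\gw_1$ and using the fact that any countable initial segment of a branch is determined by a single ground-model node---is precisely the way the corollary is meant to follow. The only superfluous part is the closing paragraph of commentary, which adds nothing to the argument.
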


It is well known that an atomless $\gs$-centered poset adds an unbounded real, and the proof translates with the obvious changes to Y-c.c.\ posets.

\begin{theorem}
\label{unboundedtheorem}
If $P$ is an atomless poset satisfying Y-c.c., then $P$ adds an unbounded real.
\end{theorem}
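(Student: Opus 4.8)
The plan is to follow the template of the $\gs$-centered case, where one exhibits a name $\dot f$ for a function in $\gw^\gw$ built from a sequence $\langle A_n:n\in\gw\rangle$ of maximal antichains: enumerate each as $A_n=\{a^n_k:k\in\gw\}$ and let $\dot f(n)$ be the unique $k$ with $a^n_k$ in the generic filter. Such an $\dot f$ is forced total (each $A_n$ is maximal), and it is forced unbounded as soon as the antichains are chosen \emph{spread out}, meaning that every condition is compatible with infinitely many members of $A_n$ for infinitely many $n$. Indeed, granting this, for a fixed ground model $g$, a fixed $N$, and an arbitrary condition $q$, I would locate some $n\ge N$ and infinitely many $k$ with $a^n_k$ compatible with $q$; choosing such a $k>g(n)$ and refining below $q\wedge a^n_k$ forces $\dot f(n)>g(n)$. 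Thus the set of conditions forcing $\dot f(n)>g(n)$ for some $n\ge N$ is dense, so $\dot f\not\leq^*\check g$ is forced for every ground model $g$, i.e.\ $\dot f$ is unbounded.

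To produce spread-out antichains from Y-c.c., I would fix a countable $M\prec H_\theta$ with $P\in M$ and let $\{F_i:i\in\gw\}$ list the filters on $RO(P)$ lying in $M$. For each $i$ I build inside $M$ a strictly decreasing sequence $\langle d^i_J:J\in\gw\rangle$ of nonzero elements of $RO(P)$ with $\bigwedge_J d^i_J=0$ and $-d^i_J\notin F_i$ for all $J$. The annuli $e^i_j=d^i_j\wedge -d^i_{j+1}$ are then nonzero and pairwise disjoint, and I let $A^{(i)}\in M$ be a maximal antichain refining $\{e^i_j:j\in\gw\}\cup\{-d^i_0\}$. Reindexing $\gw$ as $\gw\times\gw$ so that each index $i$ recurs infinitely often and setting $A_n=A^{(i(n))}$ produces the required sequence, and hence the name $\dot f$.

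The heart of the matter is the verification of spreading, and this is exactly where the precise form of Y-c.c.\ is used. Given an arbitrary condition $q$, I apply Y-c.c.\ to $M$ and $q$ to obtain a filter $F_{i_0}\in M$ with $\{p\in RO(P)\cap M:p\ge q\}\subseteq F_{i_0}$. For every $J$ the element $-d^{i_0}_J$ lies in $M$, so if $q\wedge d^{i_0}_J=0$ then $-d^{i_0}_J\ge q$ would force $-d^{i_0}_J\in F_{i_0}$, contradicting $-d^{i_0}_J\notin F_{i_0}$; hence $q$ is compatible with $d^{i_0}_J$ for every $J$. A telescoping computation using $\bigvee_{j\ge J}e^{i_0}_j=d^{i_0}_J$ then shows that $q$ is compatible with infinitely many annuli $e^{i_0}_j$, hence with infinitely many members of $A^{(i_0)}$; since $i_0=i(n)$ for infinitely many $n$, this is precisely the spreading demanded in the first paragraph.

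The step I expect to be most delicate is the construction of the sequences $\langle d^i_J\rangle$, which splits according to whether $\bigwedge F_i=0$ or $\bigwedge F_i=w_i\ne 0$. In the first case one draws a decreasing sequence from $F_i$ itself, using the c.c.c.\ of $P$ (available from Theorem~\ref{ccctheorem}) to already get infimum $0$ from countably many members, and then $-d^i_J\notin F_i$ is automatic by properness of the filter. In the second case one must descend strictly below $w_i$ to a vanishing sequence, and this is the only point where atomlessness is genuinely indispensable, since a nonzero $w_i$ must be splittable and cannot be an atom; here $-d^i_J\notin F_i$ follows because $-d^i_J\in F_i$ would force $-d^i_J\ge w_i\ge d^i_J$ and hence $d^i_J=0$. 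It is worth emphasizing that this is the sole place where more than c.c.c.\ is invoked, consistent with the fact that an atomless c.c.c.\ but non-Y-c.c.\ poset such as random forcing is bounding.
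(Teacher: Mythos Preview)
Your argument is correct and follows the same $\sigma$-centered template as the paper: enumerate the relevant filters in a countable $M$, attach to each an infinite antichain that the filter cannot absorb, build $\dot f$ from these antichains, and invoke Y-c.c.\ to identify which filter governs a given condition $q$. The paper's execution is tidier in one respect: rather than listing all filters in $M$ and running your case split on whether $\bigwedge F_i=0$, it enumerates only the \emph{ultrafilters} $F_i\in M$, observes that in an atomless algebra each ultrafilter is nowhere dense in $RO(P)$ (this is exactly where atomlessness enters, in one stroke), and lets $A_i\in M$ be any maximal antichain disjoint from $F_i$. The endgame then collapses to a single line: if $B\subset A_i$ is the finite set of members compatible with $q$, then $\sum B\geq q$ gives $\sum B\in F\subset F_i$, while $B\cap F_i=\emptyset$ together with $F_i$ being an ultrafilter gives $\sum B\notin F_i$. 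The passage from the Y-c.c.\ filter $F$ to some ultrafilter $F_i\in M$ is just the axiom of choice applied inside $M$. Your decreasing sequences $d^i_J$ and the annuli $e^i_j$ are doing by hand what the nowhere-density of ultrafilters delivers for free.
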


Together with the preservation of Y-c.c.\ under complete subalgebras, this reproves the fact that Y-c.c.\ posets add no random reals. If an Y-c.c.\ poset did add a random real, then the measure algebra would be Y-c.c.\ which contradicts Theorem~\ref{unboundedtheorem}.

\begin{proof}
Let $M\prec H_\theta$ be a countable elementary submodel. Let $\langle F_i\colon i\in\gw\rangle$ be an enumeration of all ultrafilters on $RO(P)$ that belong to the model $M$. Each of them is nowhere dense in $RO(P)$ and so one can find a maximal antichain $A_i\subset RO(P)\setminus F_i$ in the model $M$ for every $i\in\gw$. The antichain is infinite and countable by the c.c.c.\ of $P$.
Let $\{a_i^j\colon j\in\gw\}$ be an enumeration of $A_i$ for each $i\in\gw$ and define the name $\tau$ for an element of $\baire$ by $\tau(i)=j$ if $a_i^j$ belongs to the generic filter. We claim that this is a name for an unbounded real.

Suppose not, and find a condition $q\leq p$ such that for every $i\in \gw$, $q$ is compatible with only finitely many elements of the antichain $A_i$. Let $F\subset RO(P)$ be a filter in $M$ granted by the application of Y-c.c.\ to $M, q$. Use the axiom of choice in $M$ to find $i\in\gw$ such that $F\subset F_i$. Let $B\subset A_i$ be the finite set of all elements of $A_i$ compatible with $q$.
Thus, $B\in M$, $\sum B\in M$, and necessarily $\sum B\geq q$ since no elements of $A_i\setminus B$ are compatible with $q$.
Now, $B\cap F_i=0$ and so $\sum B\notin F_i$ as $F_i$ is an ultrafilter. On the other hand, $\sum B\geq q$ and so $\sum B\in F\subset F_i$ by the choice of $i$. This is a contradiction.
\end{proof}

The existence of unbounded reals in Y-c.c.\ extensions can be derived also abstractly from Theorem~\ref{cliquetheorem} and the following argument.

\begin{theorem}
Let $P$ be a bounding poset adding a new point $\dot x\in\cantor$. Then

\begin{enumerate}
\item either some condition forces $\dot x$ to be c.c.c.\ over the ground model and then some $\gw_1$-cover of $G_\gd$ sets on a compact Polish space is not preserved;
\item or $\dot x$ is forced not to be c.c.c.\ over the ground model, and then there is a compact Polish space $X$, an open graph $H\subset [X]^2$ and an $H$-anticlique in the extension which is not covered by countably many ground model $H$-anticliques.
\end{enumerate}
\end{theorem}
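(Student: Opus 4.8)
The plan is to read the two alternatives off the complete subalgebra generated by $\dot x$. Working in $RO(P)$, consider the Boolean homomorphism sending a ground model Borel set $B\subseteq\cantor$ to the value $\|\dot x\in\check B\|$; let $I=\{B:\|\dot x\in\check B\|=0\}$ be the $\gs$-ideal of ground model Borel sets that $\dot x$ is forced to avoid, and let $B_{\dot x}\le RO(P)$ be the complete subalgebra generated by $\dot x$, which is canonically the quotient $\borel/I$. I read ``$\dot x$ is c.c.c.\ over the ground model below $p$'' as ``$B_{\dot x}\restriction p$ is a c.c.c.\ Boolean algebra'', so that the two clauses are literally the two sides of a dichotomy: either some $p$ forces this, or below every condition there are uncountably many ground model Borel sets with pairwise zero Boolean intersection and nonzero value. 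The one place the boundedness of $P$ is used is a capturing lemma: writing a Borel $B$ as $f[C]$ for a ground model continuous $f\colon\baire\to\cantor$ and closed $C\subseteq\baire$, if $r\Vdash\dot x\in\check B$ and $\dot z\in C$ is a name with $\dot x=f(\dot z)$, then dominating $\dot z$ below $r$ by a ground model $g$ produces $r'\leq r$ forcing $\dot z$ into the compact box bounded by $g$, whence $\dot x\in f[K_z\cap C]=:K$, a ground model \emph{compact} set $K\subseteq B$ with $r'\Vdash\dot x\in\check K$.

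In the first case, fix $p_0$ forcing $B_{\dot x}$ c.c.c.\ and, refining, also forcing $\dot x$ different from every ground model real (possible since $\dot x$ is new), so that every ground model countable set lies in $I$. I claim the $I$-small $G_\gd$ subsets of $\cantor$ form an $\gw_1$-cover $C$ in the ground model. Given a countable $D=\{d_k:k\in\gw\}$, for each $r\leq p_0$ the bounding argument (dominating, below $r$, the function $k\mapsto$ the first level at which $\dot x$ splits from $d_k$) yields $r'\leq r$ and a ground model open $U_{r'}\supseteq D$ with $r'\Vdash\dot x\notin\check U_{r'}$; a maximal, hence by c.c.c.\ countable, antichain $\{r_i':i\in\gw\}$ below $p_0$ then gives the $G_\gd$ hull $G=\bigcap_i U_{r_i'}\supseteq D$ with $p_0\Vdash\dot x\notin\check G$, i.e.\ $G\in I$. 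Since every member of $C$ lies in $I$, the condition $p_0$ forces the new point $\dot x$ to avoid all of them, so in the extension the countable set $\{\dot x\}$ lies in no single element of $C$; this is the failure of preservation of an $\gw_1$-cover of $G_\gd$ sets on the compact Polish space $\cantor$ demanded by clause (1), contradicting Corollary~\ref{covercorollary} when $P$ is additionally Y-c.c.

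In the second case, fix $q$ and, by the failure of c.c.c., uncountably many ground model Borel sets $B_\xi$ with pairwise zero Boolean intersection and $\|\dot x\in\check B_\xi\|\wedge q\neq 0$. The capturing lemma replaces each $B_\xi$ by a ground model compact $K_\xi\subseteq B_\xi$ of positive value, so that the $K_\xi$ still have pairwise zero value of intersection. The technical heart is then to thin to an uncountable subfamily whose members are \emph{honestly} pairwise disjoint. Granting this, set $X=K(\cantor)$ with the open disjointness graph $H=\{\{K,L\}:K\cap L=0\}$, and let $\dot A=\{K\in K(\cantor)^V:\dot x\in K\}$, an $H$-anticlique since all its members contain the point $\dot x$. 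If some $q'\Vdash\dot A\subseteq\bigcup_n\check D_n$ for ground model anticliques $D_n$, then for each $\xi$ a condition forcing $\dot x\in\check K_\xi$ forces the absolute statement $K_\xi\in\bigcup_n\check D_n$, so $K_\xi\in D_{n(\xi)}$ already in the ground model; a pigeonhole over $n\in\gw$ puts uncountably many of the honestly disjoint $K_\xi$ into one pairwise intersecting $D_n$, a contradiction. Hence $\dot A$ is an anticlique not covered by countably many ground model anticliques, which is clause (2).

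The main obstacle is exactly this honest disjointification: passing from an antichain of $B_{\dot x}$, whose members are disjoint only modulo $I$, to an uncountable family of genuinely disjoint compact sets each still capturing $\dot x$ with positive probability. I expect to obtain it by iterating the elementary submodel trick behind Theorem~\ref{ccctheorem}: below a countable $M\prec H_\gq$ one deletes from each $K_\xi$ its (value zero) overlaps with the compacts coded in $M$ and re-applies the capturing lemma to recover a positive compact piece, arranging the deletions along a tree so that the surviving family is pairwise disjoint. It is here, and in the capturing lemma, that boundedness is indispensable; once an honestly disjoint positive family is in hand, both the hyperspace set-up and the pigeonhole are routine, and the contradiction with Corollary~\ref{cliquecorollary} is immediate.
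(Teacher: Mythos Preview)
Your overall architecture matches the paper's: define the ideal $I_p$ of Borel sets the new point is forced (below $p$) to avoid, use boundedness once to show every $I_p$-positive analytic set contains an $I_p$-positive compact subset (your ``capturing lemma''), and then split on whether some $I_p$ is c.c.c. Your Case~1 argument is a correct variant of the paper's; the paper covers an arbitrary Borel $B\in I_p$ by the complement of a countable maximal antichain of compact positive subsets of $\cantor\setminus B$, while you build the $G_\gd$ hull of a given countable set directly via bounding and a maximal antichain. Just be sure the antichain is taken in the c.c.c.\ algebra $B_{\dot x}\restriction p_0$, not in $P$; since boundedness passes to complete subalgebras, you may work entirely there.

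Where you diverge from the paper is in elevating ``honest disjointification'' in Case~2 to the technical heart of the argument. It is not one. The statement that $I_p$ fails to be c.c.c.\ \emph{is} the statement that there is an uncountable family of pairwise honestly disjoint $I_p$-positive Borel sets; that is the definition the paper uses. If instead you insist on starting from an uncountable antichain $\{[B_\ga]:\ga<\gw_1\}$ in the quotient, so that only $B_\ga\cap B_\gb\in I_p$ is known, simply set $C_\ga=B_\ga\setminus\bigcup_{\gb<\ga}B_\gb$: since each $\ga$ is countable and $I_p$ is a $\gs$-ideal, $B_\ga\setminus C_\ga\subseteq\bigcup_{\gb<\ga}(B_\ga\cap B_\gb)\in I_p$, so every $C_\ga$ remains positive and the $C_\ga$ are honestly pairwise disjoint. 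Now apply the capturing lemma to each $C_\ga$ to obtain compact positive $K_\ga\subseteq C_\ga$, which are automatically honestly disjoint. The elementary-submodel tree construction you sketch is unnecessary. With this in hand, your pigeonhole finish (equivalently, the paper's observation that each ground model anticlique $B_n$ meets the $H$-clique $\{K_\ga\}$ in at most one point) goes through below the given condition, exactly as in the paper.
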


\noindent Here, a point $x\in\cantor$ is c.c.c.\ over the ground model if there is a $\gs$-ideal $I$ on $\cantor$ in the ground model which is c.c.c.\ (i.e.\ there is no uncountable collection of Borel pairwise disjoint $I$-positive sets) and $x$ belongs to no Borel set in $I$
coded in the ground model.

\begin{proof}
Suppose that $p\in P$ is a condition. Let $I_p$ be the $\gs$-ideal on $\cantor$ consisting of all analytic sets $A\subset\cantor$ such that $p\Vdash\dot x\notin\dot A$. 

\begin{claim}
Every $I_p$-positive analytic set has an $I_p$-positive compact subset.
\end{claim}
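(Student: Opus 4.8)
The plan is to realize the analytic set $A$ as a projection and then use the bounding property of $P$ to trap a generic witness inside a ground model compact set. Concretely, I would write $A=\proj[C]$, where $C\subseteq\cantor\times\baire$ is closed and $\proj$ denotes the projection to the first coordinate. To say that $A$ is $I_p$-positive is precisely to say that some condition $q\leq p$ forces $\dot x\in\dot A$; fix such a $q$ together with a name $\dot y$ for an element of $\baire$ with $q\Vdash(\dot x,\dot y)\in\dot C$.

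Next I would invoke the hypothesis that $P$ is bounding: there are a condition $q'\leq q$ and a ground model function $g\in\baire$ with $q'\Vdash\dot y\leq^* g$. Strengthening $q'$ to decide the threshold of eventual domination as a concrete number $m_0$ yields $q''\leq q'$ forcing $\dot y(n)\leq g(n)$ for all $n\geq m_0$; that is, $q''$ forces $\dot y$ into the ground model set $L=\{z\in\baire:\forall n\geq m_0\ z(n)\leq g(n)\}$.

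The set $L$ is only $\sigma$-compact: $L=\bigcup_{s\in\gw^{m_0}}L_s$, where $L_s$ fixes the first $m_0$ coordinates of $z$ to be $s$ and so is compact. Correspondingly $K:=\proj[C\cap(\cantor\times L)]=\bigcup_s K_s$, with each $K_s:=\proj[C\cap(\cantor\times L_s)]$ the projection of a compact set, hence compact, and $K\subseteq A$. Since $\dot y$ witnesses membership, $q''\Vdash\dot x\in\check K$. A routine mixing argument then produces $q'''\leq q''$ and a single index $s^*$ with $q'''\Vdash\dot x\in\check K_{s^*}$: as $q''$ forces $\dot x$ into the countable ground model union $\bigcup_s\check K_s$, the maximum principle supplies a name for the relevant index, which a further strengthening decides. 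The compact set $K_{s^*}\subseteq A$ is then $I_p$-positive, since $q'''\leq p$ forces $\dot x\in K_{s^*}$.

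I expect the only delicate point to be the passage from eventual domination to an honestly compact witness set: bounding directly yields only that $\dot y$ lies in a ground model $\sigma$-compact set, and the real work is in splitting off a single compact piece while preserving $I_p$-positivity, which is exactly what the final mixing step accomplishes. Everything else—the representation of $A$ as a projection, and the absoluteness of the sets $L_s$ and $K_s$ between the ground model and the extension—is routine.
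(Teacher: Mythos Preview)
Your proof is correct and follows essentially the same approach as the paper: represent $A$ as a projection of a closed set, use the bounding assumption to trap the witness $\dot y$ in a ground model (sigma-)compact set, and project back to a compact subset of $A$ that is $I_p$-positive. The paper streamlines your $\sigma$-compact detour by passing directly to \emph{everywhere} domination---after deciding the threshold $m_0$ one can further strengthen to decide $\dot y\restriction m_0$ and then enlarge $g$ on those finitely many coordinates to get a single $z$ with $r\Vdash\forall n\ \dot y(n)\leq z(n)$, so that pruning the tree to nodes bounded by $z$ yields a finitely branching tree whose projection is the desired compact set in one step; this is exactly what your final ``mixing'' step accomplishes, only more directly.
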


\begin{proof}
Let $A\notin I_p$ be an analytic set, and let $T\subset (2\times\gw)^{<\gw}$ be a tree such that $A=\proj([T])$. Let $q\leq p$ be a condition forcing $\dot x\in\dot A$, and let $\dot y$ be a name for a function in $\baire$ such that $q\Vdash\langle \dot x, \dot y\rangle\in [\check T]$. Use the bounding assumption to find a condition $r\leq q$ and a function $z\in\baire$ such that $r\Vdash\dot y$ is dominated by $\check z$. Let $S$ be the tree obtained from $T$ by erasing all nodes which exceed the function $z$ at some point in their domain. Then $S$ is a finitely branching tree, $\proj([S])\subset A$ is compact, $r\Vdash \dot x\in \proj([S])$ and
so $\proj([S])$ is $I_p$-positive. The claim follows.
\end{proof}

Now, suppose that there is a c.c.c.\ $\gs$-ideal $J$ and a condition $p\in P$ which forces that $x$ belongs to no Borel set in $J$. Since each $I_p$-positive Borel set is $J$-positive, the $\gs$-ideal $I_p$ is c.c.c. Every Borel set $B\in I_p$ is covered by a $G_\gd$ set $C\in I_p$, namely $C=\cantor\setminus\bigcup A$ where $A$ is some (countable) maximal antichain of compact $I_p$-positive subsets of $\cantor$ disjoint from $B$. It follows that
the collection of all $G_\gd$ sets in the ideal $I_p$ is a $\gw_1$-cover, and the condition $p$ forces it not to be an $\gw_1$-cover in the extension--none of its elements contain the point $\dot x$.

Suppose on the other hand that $x$ is forced not to be c.c.c., and the ideal $I_p$ is not c.c.c.\ for any condition $p\in P$. Let $X=K(\cantor)$ and consider the open graph $H\subset X^2$ consisting of all pairs $\langle K, L\rangle$ such that $K\cap L=0$. Let $\dot A$ be a name for the $H$-anticlique consisting of all compact sets in the ground model containing the point $\dot x$. We claim that it is forced not to be covered by countably many anticliques in the ground model. Suppose that $p\in P$ is a condition and $B_n$ for $n\in\gw$ are $H$-anticliques;
we will find a condition $q\leq p$ and a compact set $K\subset\cantor$ such that $K\notin\bigcup_nB_n$ and $q\Vdash\dot x\in\dot K$. Just observe that the $\gs$-ideal $I_p$ is not c.c.c.\ and use the claim to produce an uncountable collection $C$ of pairwise disjoint $I_p$-positive compact sets. Note that this collection is an $H$-clique, and therefore for each $n\in\gw$ the intersection $B_n\cap C$ can contain at most one set. As $C$ is uncountable, there must be $K\in C\setminus\bigcup_nB_n$ and then any
condition $q\leq p$ forcing $\dot x\in\dot K$ is as required.
\end{proof}

\section{Y-c.c.: examples}

Y-c.c.\ in all of our examples is verified using the same general theorem.

\begin{theorem}
\label{ctheorem}
Let $P$ be a poset. Suppose that there is a function $w$ defined on $P$ such that

\begin{enumerate}
\item for every $p\in P$, $w(p)$ is a finite set;
\item if $p, q\in P$ are compatible, then they have a lower bound $r\leq p,q$ such that $w(r)=w(p)\cup w(q)$;
\item whenever $\{p_\ga\colon \ga\in\gw_1\}$ and $\{q_\ga\colon\ga\in\gw_1\}$ are subsets of $P$ such that $\{w(p_\ga)\colon\ga\in\gw_1\}$ and $\{w(q_\ga)\colon\ga\in\gw_1\}$ are $\Delta$-systems with the same root, then
there are ordinals $\ga, \gb\in\gw_1$ such that $p_\ga$ and $q_\gb$ are compatible.
\end{enumerate}

\noindent Then $P$ is Y-c.c.
\end{theorem}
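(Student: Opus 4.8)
The plan is to verify Y-c.c. directly from the definition: given a countable $M \prec H_\theta$ containing $P$ and $w$, and a condition $q \in P$, I must produce a filter $F \in M$ on $RO(P)$ containing every element of $RO(P) \cap M$ that is weaker than $q$. The natural candidate for $F$ is built from the combinatorial data $w(q)$. Since $w(q)$ is a finite set, I would first argue that $w(q) \cap M$ is an object in $M$; more usefully, the \emph{root-type} information of $w(q)$ relative to $M$ is what governs compatibility. The guiding intuition is that conditions $p$ whose weight $w(p)$ meshes correctly with $w(q)$ (sharing the appropriate root and being otherwise disjoint from $w(q) \setminus M$) should all be compatible with $q$ via property~(2), and property~(3) is exactly the device that prevents an antichain of such conditions.

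Concretely, I would define $F$ inside $M$ to be the set of all $b \in RO(P) \cap M$ such that the set $\{p \in P \cap M : p \leq b\}$ is, in a suitable sense, ``$w$-compatible'' with the root pattern of $q$ over $M$; equivalently, $F$ should collect those Boolean values weaker than some $p \in P \cap M$ that admits the correct weight interaction with $q$. The key step is to show two things. First, \textbf{every element of $RO(P) \cap M$ weaker than $q$ lands in $F$}: if $b \in M$ and $b \geq q$, then by density $q$ extends to a condition below $b$, and I extract the combinatorial witness placing $b$ in $F$. Second, \textbf{$F$ is a filter}: upward closure is immediate from the definition, and the crucial downward-directedness (any two members have a common lower bound in $F$) is where property~(2) enters—given two members with the right weight patterns, their canonical lower bound from~(2) has weight equal to the union, which still meshes correctly with $q$, so the meet stays in $F$ and is nonzero.

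The main obstacle, and the heart of the argument, will be proving that $F$ is \emph{nonzero as a filter}, i.e. that the membership condition I impose is genuinely compatible with $q$ and does not collapse to containing $0$. This is precisely what property~(3) is designed to handle. The danger is that the family of candidate conditions placed into $F$ could form (or generate) an antichain, which would force $\sum(P \cap M \cap F)$ to behave badly and would obstruct $F \ni q$-related values. I would rule this out by a $\Delta$-system argument: suppose toward a contradiction that the relevant conditions were pairwise incompatible; by thinning an uncountable collection inside $H_\theta$ (using that each $w(p)$ is finite, via the classical $\Delta$-system lemma) I obtain two $\gw_1$-indexed families of weights forming $\Delta$-systems with a common root, yet pairwise incompatible conditions—directly contradicting~(3). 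Since all the objects (the collection of weights, the candidate conditions) can be chosen in $M$ by elementarity, the contradiction is absolute and $F$ is a genuine filter lying in $M$.

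Finally I would tie the pieces together. Having shown $F \in M$ is a filter containing every element of $RO(P) \cap M$ above $q$, the definition of Y-c.c. is satisfied for the arbitrary $M$ and $q$, completing the proof. Throughout, the delicate bookkeeping is to make the defining property of $F$ \emph{expressible in $M$} (so that $F \in M$) while simultaneously ensuring it captures the external condition $q \notin M$; the standard resolution is to phrase $F$ in terms of the finitely many ``types'' that $w(q)$ can realize over $M$, of which $M$ sees all possibilities, and to fix the correct type using the actual $q$.
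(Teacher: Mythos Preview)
Your outline has the right instinct---use $a=w(q)\cap M$ as the parameter and build the filter from that---but it has a real gap in how properties~(2) and~(3) are deployed. You write that downward-directedness of $F$ ``is where property~(2) enters,'' but (2) only tells you what the lower bound looks like \emph{once you already know} two conditions are compatible; it does not establish compatibility. Compatibility must come from~(3), and~(3) speaks only about $\omega_1$-indexed $\Delta$-systems, not about finitely many conditions. Your sketch never explains how checking that two (or finitely many) members of $F$ have a common refinement produces the uncountable $\Delta$-systems needed to invoke~(3); the phrase ``thinning an uncountable collection'' is left without a source for that collection.

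The paper's proof supplies exactly this missing bridge. For the finite set $a=w(q)\cap M$, call $A\subset P$ \emph{$a$-large} if for every countable $b\supset a$ there is $p\in A$ with $w(p)\cap b=a$. This notion is definable in $M$, and---crucially---any $a$-large set yields, by transfinite recursion, an $\omega_1$-family whose weights form a $\Delta$-system with root $a$. So given finitely many $a$-large sets $A_0,\dots,A_{n-1}$, one builds such $\Delta$-systems in each, then applies~(3) iteratively (together with~(2) to control the weight of the merged condition at each step) to find a single condition below an element of every $A_i$. Hence $\{\sum A: A\ \text{is}\ a\text{-large}\}$ is centered, and any filter $F\in M$ extending it works: if $b\in RO(P)\cap M$ and $b\geq q$, then $\{r\in P: r\leq b\}$ is $a$-large (witnessed by $q$ itself against any countable $b'\in M$), so $b=\sum\{r:r\leq b\}\in F$. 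Your proposal would become a proof once you insert this ``$a$-large'' device and reorganize the roles of~(2) and~(3) accordingly.
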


\begin{proof}
Let $a$ be a finite set. Say that a set $A\subset P$ is $a$-large
if for every countable set $b\supset a$ there is a condition $p\in A$ such that $w(p)\cap b=a$.

\begin{claim}
\label{demclaim}
The set $\{\sum A\colon A$ is $a$-large$\}\subset RO(P)$ is centered.
\end{claim}

\noindent The trivial case where there are no $a$-large sets at all is included in the statement of the claim.

\begin{proof}
Let $\{A_i\colon i\in n\}$ be finitely many $a$-large sets; we must produce a condition $q\in P$ which for each $i$ has an element of $A_i$ above it. To this end, use transfinite induction and the largeness assumption to find conditions $p_\ga^i\in A_i$ for each $\ga\in\gw_1$ and $i\in n$ so that $\{w(p_\ga^i)\colon\ga\in\gw_1\}$ is a $\Delta$-system with root $a$ for each $i\in n$.

By induction on $i\in n$ find sets $\{q_\ga^i\colon\ga\in\gw_1\}$ so that $\{w(q_\ga^i)\colon \ga\in\gw_1\}$ forms a $\Delta$-system with root $a$, and for every $\ga\in\gw_1$ and every $j\in i+1$ there is $\gb\in\gw_1$ such that $q_\ga^i\leq p_\gb^j$. The step $i=0$ is trivially satisfied with $p_\ga^i=q_\ga^i$. To perform the induction step, by transfinite recursion on $\gg\in\gw_1$ use item (3) of the assumptions repeatedly to find countable ordinals $\ga_\gg$ and $\gb_\gg$ such that $q_{\ga_\gg}^i$ and $p_{\gb_\gg}^i$ are compatible and whenever $\gd\neq\gg$ then $(w(p_{\gb_\gg}^i)\cup w(q_{\ga_\gg}^i))\cap (w(p_{\gb_\gd}^i)\cup w(q_{\ga_\gd}^i))=a$. Then, use (2) to find conditions $q_\gg^{i+1}\leq p_{\gb_\gg}^i, q_{\ga_\gg}^i$ such that $w(q_{\gg}^{i+1})=w(p_{\gb_\gg}^i)\cup w(q_{\ga_\gg}^i)$; this concludes the induction step.

In the end the condition $q=q_0^{n-1}$ works as required.
\end{proof}

Now suppose that $M\prec H_\theta$ is a countable elementary submodel containing $w, P$, and suppose that $q\in P$ is any condition. Let $a=M\cap w(q)\in M$, and find a filter $F\in M$ on $RO(P)$ extending the centered system $\{\sum A\colon A$ is $a$-large$\}$. We will show that for every $p\in M\cap RO(P)$, if $p\geq q$ then $p\in F$. This will complete the proof of Y-c.c.\ for $P$.

Let $A=\{r\in P\colon r\leq p\}\in M$. It will be enough to show that $A$ is $a$-large, since $P\subset RO(P)$ is dense, and so $\sum A=p$ and $p\in F$. Suppose for contradiction that $A$ is not $a$-large. A counterexample, a countable set $b$, can be found in the model $M$ by elementarity. But then, the condition $q\in A$ satisfies $w(q)\cap b=a$, contradicting the assumption that $b$ is a counterexample. Thus, the poset $P$ is Y-c.c.
\end{proof}

The first specific example of a Y-c.c.\ forcing is the specialization forcing for a tree without branches of length $\gw_1$. Let $T$ be a such a tree and consider the specialization poset $P(T)$ consisting of all finite functions $p:T\to\gw$ such that for all $s<t$ in $\dom(p)$ the values $p(s), p(t)$ are distinct. The ordering is that of reverse inclusion.

\begin{corollary}
\label{aronszajncorollary}
If $T$ is a tree without branches of length $\gw_1$, then the specialization forcing $P(T)$ satisfies Y-c.c.
\end{corollary}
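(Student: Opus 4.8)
The plan is to verify the three hypotheses of Theorem~\ref{ctheorem} for $P(T)$, taking as weight the \emph{graph} of a condition: for $p\in P(T)$ put $w(p)=p$, regarded as the finite set of pairs $\{\langle t,p(t)\rangle\colon t\in\dom(p)\}$. It is essential to record the values and not merely $\dom(p)$, since two conditions disagreeing on a common node are incompatible; a weight remembering only domains would fail item~(3) as soon as a family of conditions disagreed on the root. Items~(1) and~(2) are then immediate: $w(p)$ is finite by the definition of $P(T)$, and if $p,q$ are compatible then any common extension contains $p\cup q$, so $p\cup q$ is itself a function and a condition, whence $r=p\cup q$ is a lower bound with $w(r)=w(p)\cup w(q)$.

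The substance is item~(3), and I would begin by putting the two given families into a normal form. Since $\{w(p_\ga)\}$ and $\{w(q_\gb)\}$ are $\Delta$-systems of graphs with a common root $a$, every $p_\ga$ and every $q_\gb$ extends the fixed partial specialization $a$; re-applying the $\Delta$-system lemma to the domains and thinning so that all conditions agree on the resulting root, I may assume the petal domains $\dom(p_\ga)\setminus\dom(a)$ are pairwise disjoint, all $p_\ga$ agree with $a$ on $\dom(a)$, and likewise for the $q_\gb$. Passing to uncountable subfamilies I also fix the petal sizes $n,m$, enumerations $s^\ga_1,\dots,s^\ga_n$ and $u^\gb_1,\dots,u^\gb_m$, and constant value patterns $p_\ga(s^\ga_i)=c_i$, $q_\gb(u^\gb_l)=d_l$. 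Because the $p$-petals are pairwise disjoint, each $q$-petal meets only finitely many of them, so for cofinitely many $\ga$ (given $\gb$) the petals are disjoint and $p_\ga\cup q_\gb$ is automatically a function. For such a pair, $p_\ga\cup q_\gb$ fails to be a condition only if some \emph{dangerous} coordinate pair $(i,l)$ with $c_i=d_l$ has $s^\ga_i$ and $u^\gb_l$ comparable; so it suffices to find $\ga,\gb$ for which no dangerous pair is comparable.

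The main obstacle is exactly this last step, which is the combinatorial core of the classical fact that the specialization forcing of a branchless tree is c.c.c.\ I would argue by contradiction, extracting from the assumption that every admissible pair conflicts a configuration forbidden by the absence of branches of length $\gw_1$. The mechanism is already visible when all petals are single nodes of a common value: conflict of every pair then means each $s^\ga_1$ is comparable to each $u^\gb_1$. The family $\{s^\ga_1\}$ cannot be an uncountable chain, as that would be a branch of length $\gw_1$; choosing incomparable members $s^{\ga_0}_1,s^{\ga_1}_1$, a short case analysis shows that each $u^\gb_1$ comparable to both must lie below both, hence below their meet, so the $u^\gb_1$ fill a countable set of predecessors---contradicting that they are uncountably many and distinct. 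For the general case I would iterate this pinning coordinate by coordinate, using the values $c_i,d_l$ to restrict which comparabilities can occur, with an induction on the number of petal nodes. The delicate point, and the reason the hypothesis concerns branches rather than antichains, is that one may not thin any coordinate to an antichain (the tree may be Souslin), so the contradiction must instead be produced by forcing the comparabilities to concentrate along a single chain. Once item~(3) is established, Theorem~\ref{ctheorem} gives that $P(T)$ is Y-c.c.
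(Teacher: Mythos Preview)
Your approach is correct, and your choice of $w(p)=p$ (the graph) is in fact a repair of the paper's argument: the paper takes $w(p)=\dom(p)$, and your worry is on target. With $w=\dom$, item~(3) of Theorem~\ref{ctheorem} can fail outright---take a fixed node $t_0$, uncountably many nodes $s_\ga$ incomparable with $t_0$, and set $p_\ga=\{(t_0,0),(s_\ga,0)\}$, $q_\ga=\{(t_0,1),(s_\ga,0)\}$; the domain families are $\Delta$-systems with common root $\{t_0\}$, yet no $p_\ga$ is compatible with any $q_\gb$. Your weight avoids exactly this, and items~(1)--(2) go through as you say.

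For item~(3), the paper takes a shorter route than yours. Rather than reproving the combinatorial heart of Baumgartner's c.c.c.\ argument, it packages the two $\Delta$-systems into a single uncountable family of auxiliary conditions: choose $\ga_\gg,\gb_\gg$ so that the combined petals $b_\gg=(\dom(p_{\ga_\gg})\cup\dom(q_{\gb_\gg}))\setminus\dom(a)$ are pairwise disjoint and lie above every element of $\dom(a)$, and let $r_\gg$ be the condition assigning value $0$ to every minimal element of $b_\gg$. Now quote the known c.c.c.\ of $P(T)$ to find $\gg\neq\gd$ with $r_\gg,r_\gd$ compatible; this forces every element of $b_\gg$ to be tree-incomparable with every element of $b_\gd$, and hence $p_{\ga_\gg}$ and $q_{\gb_\gd}$ are compatible (they agree on $\dom(a)$ by your choice of $w$, and no petal node from one can be tree-comparable with any node of the other). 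This black-box reduction buys you the general case in one stroke, whereas your direct inductive argument on petal sizes is left as a sketch and would need more care to complete. I would recommend keeping your weight $w(p)=p$ but replacing your hand-rolled argument for item~(3) with the paper's reduction to c.c.c.
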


\begin{proof}
For every condition $p\in P(T)$ let $w(p)=\dom(p)$. It will be enough to show that item (3) of Theorem~\ref{ctheorem} is satisfied. 
Suppose that $\{p_\ga\colon\ga\in \gw_1\}$ and $\{q_\ga\colon\ga\in\gw_1\}\subset P(T)$ are sets such that $\{w(p_\ga)\colon\ga\in \gw_1\}$ and $\{w(q_\ga)\colon\ga\in\gw_1\}$ are $\Delta$-systems with the same root $a$. By transfinite recursion on $\gg\in\gw_1$ find ordinals $\ga_\gg$ and $\gb_\gg$ such that the sets $b_\gg=(\dom(p_{\ga_\gg})\cup \dom(q_{\gb_\gg}))\setminus a$ for $\gg\in\gw_1$ are pairwise disjoint and contain no element of $T$ which is below some element of $a$. 

For each $\gg\in\gw_1$ consider a condition $r_\gg\in P(T)$ whose domain is the set of minimal elements of $b_\gg$ and which assigns to every element of its domain value $0$. Since the poset $P(T)$ is c.c.c.\ (see e.g.~\cite{baumgartner:thesis}) there must be countable ordinals $\gd\neq\gg$ for which $r_\gg$ and $r_\gd$ are compatible, i.e.\ no elements of $b_\gg$ is compatible with any element of $b_\gd$. It is immediate that the conditions $p_{\ga_\gg}$ and $q_{\gb_\gd}\in P$ are compatible as well.
\end{proof}

The usual gap specialization forcing satisfies Y-c.c.\ as well. To this end, recall basic definitions. An $(\gw_1, \gw_1)$-pregap is a
sequence $\langle x_\ga, y_\ga\colon \ga\in\gw_1\rangle$ of subsets of $\gw$ such that $x_\ga\cap y_\ga=0$ and $\gb\in\ga$ implies that $x_\gb\subset^* x_\ga$ and $y_\gb\subset^* y_\ga$, each time up to finitely many exceptional natural numbers. A set $c\subset\gw$ separates the pregap if for every ordinal $\ga\in\gw_1$, $x_\ga\subset^* c$ and $y_\ga\cap c=^*0$ holds.
A gap is a pregap that cannot be separated.

A pregap is a gap if and only if for every uncountable set $D\subset\gw_1$ there are distinct ordinals $\ga, \gb\in D$ such that
$(x_\ga\cap y_\gb)\cup (x_\gb\cap y_\ga)\neq 0$. A gap is special if there is an uncountable set $D\subset\gw_1$ such that for all distinct ordinals $\ga, \gb\in D$
$(x_\ga\cap y_\gb)\cup (x_\gb\cap y_\ga)\neq 0$ holds. For a special gap it is impossible to introduce a separating set without collapsing $\gw_1$.

There is a natural specializing forcing for gaps \cite[Lemma 3.8]{bekkali:set}. Suppose that $H=\langle x_\ga, y_\ga\colon\ga\in\gw_1\rangle$ is a gap. Let $P(H)$ be the poset of all finite sets $p\subset\gw_1$ such that for distinct ordinals $\ga, \gb\in p$ the requirement 
$(x_\ga\cap y_\gb)\cup (x_\gb\cap y_\ga)\neq 0$ holds. It turns out that $P(H)$ is c.c.c. It follows that there is a condition $p\in P(H)$ which forces that the union of the generic filter is uncountable; this is the specializing set.

\begin{corollary}
\label{gapcorollary}
If $H$ is a $(\gw_1, \gw_1)$-gap then the specialization forcing $P(H)$ satisfies Y-c.c.
\end{corollary}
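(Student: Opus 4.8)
The plan is to apply Theorem~\ref{ctheorem} with the weight function $w(p)=p$, recalling that a condition $p\in P(H)$ is itself a finite subset of $\gw_1$. Item~(1) is immediate since conditions are finite. For item~(2), observe that two conditions $p,q$ are compatible in $P(H)$ exactly when $p\cup q$ is again a condition (a subset of a clique is a clique), so $r=p\cup q$ is a common lower bound with $w(r)=p\cup q=w(p)\cup w(q)$. Everything therefore reduces to verifying item~(3), which is the heart of the matter.

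For item~(3) I would start with families $\{p_\ga\}$ and $\{q_\ga\}$ whose weights form $\Delta$-systems with a common root $a$, and thin them so that the tails $p_\ga\setminus a$ and $q_\ga\setminus a$ are pairwise disjoint, of fixed sizes, and arranged into increasing blocks lying above $\sup a$. The only pairs that can prevent $p_\ga\cup q_\gb$ from being a condition are the genuine cross pairs $(\gg,\gd)\in(p_\ga\setminus a)\times(q_\gb\setminus a)$, since pairs meeting the root, or internal to a single condition, already link. After the block arrangement every such cross pair is ordered, $\gg<\gd$, and for ordered indices linking is finitary: since $x_\gg\subset^* x_\gd$ and $y_\gg\subset^* y_\gd$, both $x_\gg\cap y_\gd$ and $x_\gd\cap y_\gg$ are finite, contained in the coherence-error sets. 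So I must produce $\ga<\gb$ for which all finitely many cross pairs link.

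The engine is a separator lemma which I would prove first: in a gap, any two uncountable sets $A,B\subset\gw_1$ contain indices $\mu\in A$, $\nu\in B$ with $(x_\mu\cap y_\nu)\cup(x_\nu\cap y_\mu)\neq 0$; otherwise the set $c=\bigcup_{\mu\in A}x_\mu$ satisfies $x_\xi\subset^* c$ and $y_\xi\cap c=^*0$ for every $\xi$ (using that $A$ and $B$ are cofinal), separating the gap. This is exactly item~(3) when the tails are singletons. For the general case I argue by contradiction: if every $p_\ga\cup q_\gb$ failed to be a condition, then a pressing-down over the finitely many tail-positions yields, for uncountably many indices, a single tail-element that fails to link cofinally much of the gap, hence (by the one-sided version of the separator computation) almost-non-links every index; a secondary $\Delta$-system refinement fixing the finite coherence-errors then upgrades these relations to an uncountable pairwise non-linking family, contradicting the stated characterization of gaps.

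The main obstacle is precisely this last upgrade. The separator lemma hands out linking pairs, but the non-linking data extracted from the failure hypothesis only gives \emph{finite} intersections $x_\gg\cap y_\gd$, whereas a genuine anticlique (equivalently, a separator) requires these intersections to be \emph{empty}. Bridging finite to empty is where the mod-finite coherence of the gap must be used in an essential way—a naive Ramsey-type extraction is unavailable, as $\gw_1\not\to(\gw_1)^2_2$—and it is handled by passing to a cofinal subfamily along which the finite traces of the sets $x_\gz,y_\gz$ are stabilized. This bookkeeping, rather than any conceptual difficulty, is the technical core of item~(3), after which Theorem~\ref{ctheorem} delivers Y-c.c.
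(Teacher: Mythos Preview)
Your reduction to Theorem~\ref{ctheorem} with $w(p)=p$ and your handling of items (1) and (2) match the paper exactly, and you are right that item (3) is the whole content. Your separator lemma is also correct and is essentially the gap characterization the paper invokes. Where you diverge is in the strategy for item~(3): you set up a contradiction and try to manufacture an uncountable pairwise non-linking family via pressing-down, whereas the paper argues directly and never needs that extraction.

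The paper's route is this. First merge the two $\Delta$-systems into a single family of disjoint blocks $b_\gg=(p_{\ga_\gg}\cup q_{\gb_\gg})\setminus a$, all above $\max(a)$. Each block is finite, and the pregap is $\subset^*$-increasing, so for each $\gg$ there is a single $k_\gg$ below which all the coherence errors among $\{x_\gd,y_\gd:\gd\in b_\gg\}$ live; a counting argument fixes $k=k_\gg$ on an uncountable $D$. Now, writing $\gd_\gg=\min(b_\gg)$, the truncated pairs $c_\gg=x_{\gd_\gg}\setminus k$, $d_\gg=y_{\gd_\gg}\setminus k$ form a pregap which is still a gap (a separator for it would separate $H$). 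The gap characterization then hands you $\gg\neq\gg'$ with, say, some $n\in c_\gg\cap d_{\gg'}$. The point of the nesting is that this single $n$ witnesses linking for \emph{every} cross pair: for any $\gd\in b_\gg$ and $\gd'\in b_{\gg'}$ one has $n\in x_\gd\cap y_{\gd'}$, so $p_{\ga_\gg}\cup q_{\gb_{\gg'}}$ is a condition.

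Your contradiction argument is shakier precisely at the step you flag. From ``every $p_\ga\cup q_\gb$ fails'' and pressing-down you get, for each $\ga$, one tail element $\gg_\ga$ that non-links uncountably many $\gd$'s coming from various $q_\gb$'s; but nothing in that data forces the $\gg_\ga$'s themselves to pairwise non-link, which is what you need to contradict the gap characterization. Your ``secondary $\Delta$-system refinement fixing the finite coherence-errors'' is exactly the stabilization the paper performs---but the paper uses it \emph{forward}, to propagate a single linking witness across an entire block, rather than backward to assemble an anticlique. That forward use is what makes the argument short: once the blocks are nested below $k$, the minimum index carries all the information and you never have to touch a contradiction hypothesis at all.
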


\begin{proof}
For every condition $p\in P(H)$ let $w(p)=p$. It will be enough to show that item (3) of Theorem~\ref{ctheorem} is satisfied. 
Suppose that $\{p_\ga\colon\ga\in \gw_1\}$ and $\{q_\ga\colon\ga\in\gw_1\}\subset P(H)$ are sets such that $\{w(p_\ga)\colon\ga\in \gw_1\}$ and $\{w(q_\ga)\colon\ga\in\gw_1\}$ are $\Delta$-systems with the same root $a$. By transfinite recursion on $\gg\in\gw_1$ find ordinals $\ga_\gg$ and $\gb_\gg$ such that the sets $b_\gg=(p_{\ga_\gg}\cup q_{\gb_\gg})\setminus a$ for $\gg\in\gw_1$ are pairwise disjoint and contain no ordinals less or equal to $\max(a)$. 

 Use a counting argument to find an uncountable set $D\subset\gw_1$ and a number $k\in\gw$ such that for every
ordinal $\gg\in D$, the sets $\{x_\gd\setminus k\colon\gd\in b_\gg\}$ and the sets $\{y_\gd\setminus k\colon\gd\in b_\gg\}$
are linearly ordered by inclusion. For each $\gg\in D$, write $\gd_\gg=\min(b_\gg)$ and let $c_\gg=x_{\gd_\gg}\setminus k$ and $d_\gg=y_{\gd_\gg}\setminus k$. The object $\langle c_\gg, d_\gg\colon\gg\in D\rangle$
is a gap since any set separating this gap would also separate the original gap. Therefore, there must be ordinals $\gg\neq\gg'\in D$
such that $(c_\gg\cap d_{\gg'})\cup (c_{\gg'}\cap d_\gg)\neq 0$. It is easy to verify that the conditions $p_{\ga_\gg}, q_{\gb_{\gg'}}\in P(H)$ are compatible as required.
\end{proof}

Todorcevic \cite[Theorem 7.8]{todorcevic:partitions} introduced a partition-type forcing associated with unbounded sequences of functions of length $\gw_1$; this poset satisfies Y-c.c.\ as well. Let $\vec f=\langle f_\ga\colon\ga\in\gw_1\rangle$ be a modulo finite increasing, unbounded sequence of increasing functions in $\baire$.
Let $P(\vec f)$ be the poset of all finite sets $p\subset\gw_1$ such that for all ordinals $\ga\in\gb$ in the set $p$, there is $n$ such that $f_\ga(n)>f_\gb(n)$. The ordering of $P(\vec f)$ is that of inclusion.

\begin{corollary}
\label{unboundedcorollary}
If the sequence $\vec f$ is unbounded then the poset $P(\vec f)$ satisfies Y-c.c.
\end{corollary}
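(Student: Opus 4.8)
The plan is to apply Theorem~\ref{ctheorem} with the weight function $w(p)=p$, exactly as in Corollaries~\ref{aronszajncorollary} and~\ref{gapcorollary}. Items (1) and (2) are immediate: $w(p)=p$ is finite, and if $p,q\in P(\vec f)$ are compatible then any common lower bound contains $p\cup q$, so $p\cup q$ is itself a condition, a lower bound, and satisfies $w(p\cup q)=w(p)\cup w(q)$. Thus everything reduces to item (3): given families $\{p_\ga\}$ and $\{q_\ga\}$ whose weights form $\Delta$-systems with a common root $a$, produce $\ga,\gb$ with $p_\ga\cup q_\gb\in P(\vec f)$, i.e.\ a single mutually crossing pair of conditions.

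First I would run the transfinite recursion of Corollary~\ref{gapcorollary}: choose $\ga_\gg,\gb_\gg$ so that the blocks $b_\gg=(p_{\ga_\gg}\cup q_{\gb_\gg})\setminus a$ are pairwise disjoint and lie strictly above $\max(a)$, and, after a further thinning, are \emph{increasing}, i.e.\ $\max b_\gg<\min b_{\gg'}$ whenever $\gg<\gg'$. A counting (pressing-down) argument then fixes a level $k\in\gw$ together with finitely much side data so that for every $\gg$: the leaves of $b_\gg$ have a fixed number and a fixed shape (which leaves are $p$-leaves, which are $q$-leaves, and in what order); the restrictions $f_\gd\restriction k$ depend only on the slot of $\gd$, not on $\gg$; and beyond $k$ the leaf functions of a single block are pointwise ordered by the ordinal order of the leaves. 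The last point is possible because the finitely many leaf functions of a block are pairwise $\leq^*$-comparable, and the counting argument makes the ordering threshold uniform in $\gg$.

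With this normalization, compatibility of $p_{\ga_\gg}$ and $q_{\gb_{\gg'}}$ for $\gg<\gg'$ (so that every $p$-leaf of $b_\gg$ precedes every $q$-leaf of $b_{\gg'}$) reduces to: for each $p$-leaf $\gd$ of $b_\gg$ and each $q$-leaf $\eps$ of $b_{\gg'}$ there is $n$ with $f_\gd(n)>f_\eps(n)$. I would attack this slot by slot. For a fixed pair of slots the relevant leaves form two \emph{singleton} families of ordinals, and unboundedness applies cleanly: interleave the two singleton families into one increasing $\gw_1$-sequence that alternates sides. If that slot-pair never crossed, all consecutive (cross-side) comparisons would be ``no crossing'', i.e.\ the smaller-index function would lie pointwise below the larger-index one, yielding a pointwise non-decreasing $\gw_1$-sequence of functions; such a sequence is eventually constant in each coordinate (a non-decreasing $\gw_1$-sequence of naturals stabilizes), hence eventually equal to a single function, which, being cofinal in the $\leq^*$-increasing family $\vec f$, would dominate $\vec f$ modulo finite, contradicting unboundedness.

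The main obstacle is to pass from ``each slot-pair crosses for \emph{some} choice of indices'' to ``all slot-pairs cross for a \emph{single} choice of indices.'' In the gap and tree corollaries this difficulty is absent: the relevant relation is monotone, so one reduces a block to its minimal leaf and a single crossing propagates automatically to every pair. Here the relation $f_\gd(n)>f_\eps(n)$ is increasing in $\gd$ but \emph{decreasing} in $\eps$, so no single leaf is a universal witness and the minimal-leaf reduction fails; one must instead secure the crossings of the smallest $p$-leaf against the largest $q$-leaf simultaneously over all slot-pairs, controlling the spread of the leaf functions beyond the uniform level $k$. This is precisely the combinatorial content of Todorcevic's theorem that $P(\vec f)$ is c.c.c.\ for unbounded $\vec f$ (\cite[Theorem 7.8]{todorcevic:partitions}): I would invoke that finer, oscillation-style analysis at this step — thinning the alternating sequence so that the obstructing slot-pair is constant and then extracting a pointwise non-decreasing sequence cofinal in $\vec f$ — exactly as Corollary~\ref{gapcorollary} invokes the defining property of a gap.
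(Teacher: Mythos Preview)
Your setup and normalization match the paper: apply Theorem~\ref{ctheorem} with $w(p)=p$, form the merged increasing blocks $b_\gg$, and thin to an uncountable $D$ and a level $k$ so that within each block the leaf functions are pointwise ordered above $k$. The gap is at the final step, where your diagnosis that ``the minimal-leaf reduction fails'' is incorrect and sends you toward an unnecessary appeal to the oscillation analysis behind Todorcevic's c.c.c.\ proof.

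The crossings need not be found slot-pair by slot-pair with separate existential witnesses; a \emph{single} coordinate $n>k$ handles them all at once. Within a block the function at the minimal leaf $\gd_\gg=\min(b_\gg)$ is the pointwise minimum above $k$, and the family $\langle f_{\gd_\gg}:\gg\in D\rangle$ is still unbounded, so there is one $n>k$ at which the values $\{f_{\gd_\gg}(n):\gg\in D\}$ are unbounded in $\gw$. For each $m\in\gw$ fix $\gg(m)\in D$ with $f_{\gd_{\gg(m)}}(n)>m$; choose $\gg'\in D$ above all the countably many $\gg(m)$ and set $m=\max\{f_\gd(n):\gd\in b_{\gg'}\}$. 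Then for every $\mu\in p_{\ga_{\gg(m)}}\setminus a\subset b_{\gg(m)}$ and every $\nu\in q_{\gb_{\gg'}}\setminus a\subset b_{\gg'}$ one has $\mu<\nu$ and $f_\mu(n)\geq f_{\gd_{\gg(m)}}(n)>m\geq f_\nu(n)$, so $p_{\ga_{\gg(m)}}\cup q_{\gb_{\gg'}}$ is a condition. The monotonicity to exploit is not that of the relation ``$\exists n\ f_\gd(n)>f_\eps(n)$''---which, as you observe, goes the wrong way in $\eps$---but that of ``$f_\gd(n)>f_\eps(n)$'' at a \emph{fixed} $n$, which is monotone in both variables in exactly the direction needed.
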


\begin{proof}
For every condition $p\in P(\vec f)$ let $w(p)=p$. It will be enough to show that item (3) of Theorem~\ref{ctheorem} is satisfied. 
Suppose that $\{p_\ga\colon\ga\in \gw_1\}$ and $\{q_\ga\colon\ga\in\gw_1\}\subset P(\vec f)$ are sets such that $\{w(p_\ga)\colon\ga\in \gw_1\}$ and $\{w(q_\ga)\colon\ga\in\gw_1\}$ are $\Delta$-systems with the same root $a$. By transfinite recursion on $\gg\in\gw_1$ find ordinals $\ga_\gg$ and $\gb_\gg$ such that the sets $b_\gg=(p_{\ga_\gg}\cup q_{\gb_\gg})\setminus a$ for $\gg\in\gw_1$ are pairwise disjoint and contain no ordinals less or equal to $\max(a)$. 

 Use a counting argument to find an uncountable set $D\subset\gw_1$ and a number $k\in\gw$ such that for every
ordinal $\gg\in D$
the functions $\{f_\gd\colon\gd\in b_\gg\}$ are linearly ordered by domination everywhere above $k$.
Let $\gd_\gg=\min(b_\gg)$. The collection $\langle f_{\gd_\gg}\colon\gg\in\gw_1\rangle$ is unbounded, and therefore there is a number
$n>k$ such that for every $m$ there is an ordinal $\gg(m)\in D$ such that $f_{\gd_{\gg(m)}}(n)>m$. Let $\gg'\in D$ be an ordinal larger than all $\gg(m)$ for $m\in\gw$, let $m=\max\{f_\gd(n)\colon \gd\in b_{\gg'}\}$ and observe that the conditions $p_{\ga_{\gg(m)}}, q_{\gg'}$ are compatible as desired.
\end{proof}

Balcar, Paz{\' a}k, and Th{\" u}mmel \cite{bpt:todorcevic}, following Todorcevic \cite{todorcevic:forcing}, defined an ordering $T(Y)$ for every topological space $Y$. Th{\" u}mmel used these orderings to settle an old problem of Horn and Tarski
\cite{thuemmel:horntarski}. There are several closely related definitions of $T(Y)$; we will use the following. $T(Y)$ consists of all sets $p\subset Y$ such that
$p$ is a union of finitely many converging sequences together with their limits. For $p\in T(Y)$, write $w(p)$ for the set of its accumulation points. The ordering is defined by $q\leq p$ if $p\subset q$ and $w(q)\cap p=w(p)$.

The poset $T(Y)$ may or may not be c.c.c., $\gs$-centered etc, depending on the topological space $Y$. However,
if it is c.c.c., then it automatically assumes Y-c.c. This improves a result of Yorioka \cite{yorioka:random}.

\begin{corollary}
\label{todorceviccorollary}
Let $Y$ be a topological space. If $T(Y)$ is c.c.c., then $T(Y)$ has Y-c.c.
\end{corollary}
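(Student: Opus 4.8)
The plan is to verify the three hypotheses of Theorem~\ref{ctheorem} for the function $w$ that assigns to each $p\in T(Y)$ its (finite) set of accumulation points. Item~(1) is immediate, since $p$ is a union of finitely many convergent sequences together with their limits and hence has at most finitely many accumulation points, all of which lie in $p$. The engine for the remaining two items is the elementary topological identity that the accumulation points of a union are the union of the accumulation points, i.e.\ $w(p\cup q)=w(p)\cup w(q)$ for all $p,q\in T(Y)$. In particular $p\cup q$ is again a finite union of convergent sequences whose finitely many limits lie in $p\cup q$, so $p\cup q\in T(Y)$ for all $p,q$, \emph{regardless of compatibility}. From the definition of the ordering one then reads off that $p\cup q$ is a common lower bound of $p$ and $q$ exactly when $w(q)\cap p\subseteq w(p)$ and $w(p)\cap q\subseteq w(q)$; and since any common lower bound $r$ satisfies $p\cup q\subseteq r$ and $w(p\cup q)\subseteq w(r)$, the condition $p\cup q$ is itself a common lower bound whenever $p,q$ are compatible at all. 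This yields at once item~(2) (take $r=p\cup q$, whose weight is $w(p)\cup w(q)$ by the identity) and the working criterion
$$p\text{ and }q\text{ are compatible}\iff w(q)\cap p\subseteq w(p)\text{ and }w(p)\cap q\subseteq w(q).$$

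The substance is item~(3). Suppose $\{p_\ga:\ga\in\gw_1\}$ and $\{q_\ga:\ga\in\gw_1\}$ are given with $\{w(p_\ga)\}$ and $\{w(q_\ga)\}$ forming $\Delta$-systems with common root $a$; write $w(p_\ga)=a\cup s_\ga$ and $w(q_\ga)=a\cup t_\ga$, so the $s_\ga$ are pairwise disjoint, the $t_\ga$ are pairwise disjoint, and $a\subseteq p_\ga\cap q_\ga$. The one feature not controlled by the $\Delta$-systems is that a petal point of one family may accidentally lie in a condition of the other family, either as a petal or, worse, as a mere sequence point. I would first dispose of the petal-on-petal coincidences by thinning. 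Define a graph on $\gw_1$ by joining $\ga\neq\gb$ whenever $s_\ga\cap t_\gb\neq\emptyset$ or $t_\ga\cap s_\gb\neq\emptyset$; since each $s_\ga$ is finite and the $t_\gb$ are pairwise disjoint (and symmetrically), every vertex has finite degree, so a straightforward transfinite recursion produces an uncountable independent set $S\subseteq\gw_1$. For $\ga\neq\gb$ in $S$ we then have $s_\ga\cap t_\gb=\emptyset$ and $t_\ga\cap s_\gb=\emptyset$.

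It remains to kill the sequence-point coincidences, and this is exactly where the c.c.c.\ of $T(Y)$ enters. By the first paragraph, for each $\ga\in S$ the set $r_\ga:=p_\ga\cup q_\ga$ is a legitimate condition of $T(Y)$ with $w(r_\ga)=a\cup s_\ga\cup t_\ga$. Applying c.c.c.\ to $\{r_\ga:\ga\in S\}$ yields $\ga\neq\gb$ in $S$ with $r_\ga$ and $r_\gb$ compatible, and I claim the cross pair $p_\ga,q_\gb$ is then compatible, which is all item~(3) demands. Indeed, compatibility of $r_\ga,r_\gb$ gives $w(r_\gb)\cap r_\ga\subseteq w(r_\ga)$; intersecting with $t_\gb$ and using $t_\gb\subseteq w(r_\gb)$ and $p_\ga\subseteq r_\ga$ gives $t_\gb\cap p_\ga\subseteq(a\cup s_\ga\cup t_\ga)\cap t_\gb$, which is empty because $t_\gb$ avoids $a$, avoids $s_\ga$ (independence of $S$), and avoids $t_\ga$ (the $\Delta$-system). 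Hence $w(q_\gb)\cap p_\ga=a\subseteq w(p_\ga)$, and the symmetric computation from $w(r_\ga)\cap r_\gb\subseteq w(r_\gb)$ gives $w(p_\ga)\cap q_\gb=a\subseteq w(q_\gb)$. By the criterion above $p_\ga$ and $q_\gb$ are compatible, verifying item~(3), so Theorem~\ref{ctheorem} delivers Y-c.c.

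The only real subtlety, and the place I expect to spend the most care, is the tension between the finiteness of $w$ and the fact that the conditions themselves are infinite, so that the $\Delta$-system hypothesis constrains only the accumulation points and leaves the sequence points wild. The two observations that resolve this are that $p\cup q$ is unconditionally a member of $T(Y)$, so that the auxiliary conditions $r_\ga$ always exist even when $p_\ga,q_\ga$ are incompatible, and that feeding the \emph{whole} conditions into $r_\ga,r_\gb$ lets a single application of c.c.c.\ detect the sequence-point coincidences that no amount of $\Delta$-system thinning could reach, root-converging sequences included, since the root $a$ is absorbed harmlessly on both sides. This mirrors the template of Corollaries~\ref{aronszajncorollary}--\ref{unboundedcorollary}, the difference being that here the reduction to c.c.c.\ must be arranged so as to survive the weakness of a $\Delta$-system on accumulation points alone.
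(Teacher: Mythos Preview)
Your proof is correct and follows essentially the same approach as the paper: both verify item~(3) of Theorem~\ref{ctheorem} by forming auxiliary conditions $r=p\cup q\in T(Y)$ with pairwise disjoint petals outside the root $a$, applying the assumed c.c.c.\ of $T(Y)$ to these, and reading off compatibility of the required cross-pair $p_\ga,q_\gb$ from the criterion $w(q)\cap p\subseteq w(p)$ and $w(p)\cap q\subseteq w(q)$. The only cosmetic differences are that the paper arranges the disjoint petals by a direct transfinite recursion picking possibly different indices $\ga_\gg,\gb_\gg$ (rather than your finite-degree graph independence argument on a single index set), and that you spell out items~(1)--(2) and the compatibility criterion where the paper leaves them implicit.
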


\begin{proof}
We will show that the function $p\mapsto w(p)$ has the required properties. It will be enough to show that item (3) of Theorem~\ref{ctheorem} is satisfied. 
Suppose that $\{p_\ga\colon\ga\in \gw_1\}$ and $\{q_\ga\colon\ga\in\gw_1\}\subset T(Y)$ are sets such that $\{w(p_\ga)\colon\ga\in \gw_1\}$ and $\{w(q_\ga)\colon\ga\in\gw_1\}$ are $\Delta$-systems with the same root $a$. By transfinite recursion on $\gg\in\gw_1$ find ordinals $\ga_\gg$ and $\gb_\gg$ such that the sets $b_\gg=(w(p_{\ga_\gg})\cup w(q_{\gb_\gg}))\setminus a$ for $\gg\in\gw_1$ are pairwise disjoint.

For each $\gg\in\gw_1$ consider a condition $r_\gg=p_{\ga_\gg}\cup q_{\gb_\gg}\in T(Y)$. Since the poset $T(Y)$ is c.c.c.\ there must be countable ordinals $\gd\neq\gg$ for which $r_\gg$ and $r_\gd$ are compatible, i.e.\ no element of $w(r_\gg)$ is an isolated point of $r_\gd$ and vice versa. 
It is immediate that the conditions $p_{\ga_\gg}$ and $q_{\gb_\gd}\in P$ are compatible as well.
\end{proof}

As a final remark in this section, note that the OCA partition posets in general do not have Y-c.c.\ by Theorem~\ref{cliquetheorem}.

\begin{question}
Suppose that $I$ is a suitably definable ideal on a Polish space $X$ and let $P_I$ be the quotient Boolean algebra of Borel subsets of $X$
modulo $I$. Are the following equivalent?

\begin{enumerate}
\item $P_I$ is Y-c.c.;
\item $P_I$ is $\gs$-centered.
\end{enumerate}
\end{question}

Note that if a c.c.c.\ poset $P$ is a finite support product of posets, and each factor satisfies the assumptions of Theorem~\ref{ctheorem}, then $P$ also satisfies the assumptions, and is Y-c.c.

\begin{question}
Suppose that $P, Q$ are Y-c.c.\ posets such that $P\times Q$ is c.c.c. Must $P\times Q$ be Y-c.c.?
\end{question}

\section{Y-properness}

The proper variation of Y-c.c.\ yields much greater variety of posets. The basic consequences of Y-properness remain mostly the same
as for Y-c.c.\ and also the proofs of Section~\ref{asection} immediately adapt to give the following:

\begin{theorem}
\label{yproperpreservationtheorem}
Suppose that $P$ is a Y-proper poset.

\begin{enumerate}
\item Whenever $\kappa$ is a cardinal and $f\in \kappa^\kappa$ is a function in the $P$-extension which is not in the ground model,
then there is a ground model countable set $a\subset\kappa$ such that $f\restriction a$ is not in the ground model;
\item $P$ preserves $\gw_1$-covers consisting of $G_\gd$ sets on compact Polish spaces;
\item if $X$ is a second countable topological space and $H\subset X^\gw$ is an open set, then every $H$-anticlique in the extension is covered by countably many ground model anticliques;
\item if $P$ is atomless, then it adds an unbounded real.
\end{enumerate}
\end{theorem}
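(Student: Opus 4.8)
The plan is to reprove the four items by following the corresponding Y-c.c.\ arguments of Section~\ref{asection} almost verbatim, with a single structural substitution. Each of those arguments fixes a countable $M\prec H_\theta$ and, at the key moment, feeds an arbitrary condition $q$ into Definition~\ref{yccdefinition} to extract a filter $F\in M$ on $RO(P)$ absorbing all elements of $RO(P)\cap M$ above $q$. In the Y-proper setting I would instead arrange that $M$ contains the condition $p$ under consideration, pass to a Y-master condition $q_0\le p$, and apply the filter clause of Y-properness to conditions $r\le q_0$; this produces, for each such $r$, a filter $F\in M$ absorbing $\{s\in RO(P)\cap M:s\ge r\}$. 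Since a Y-master condition exists below every $p\in P\cap M$ and $M$ may be chosen to contain any prescribed condition, everything proved below $q_0$ holds on a dense set and is thus forced by $1$.

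The substantive work is to replace the three appeals to the c.c.c.\ of $P$ in the Y-c.c.\ proofs by properties of the master condition $q_0$. Item (1) restates for Y-proper posets the theorem that a Y-c.c.\ poset adds no function in $\kappa^\kappa$ all of whose ground-model countable restrictions already lie in $V$; arguing the contrapositive I assume every such restriction of $\dot f$ lies in $V$ and manufacture a $g\in M$ with $q\Vdash\dot f=\check g$. The c.c.c.\ was used first to see that a value $\gb$ decided for $\dot f(\check\ga)$ with $\ga\in M$ again lies in $M$. Here I would use instead that $q_0$ is master: $\dot f(\check\ga)$ is a name in $M$ for an ordinal, so $q_0$ forces its value into $M[\dot G]\cap\ord=M\cap\ord$, whence any $q\le q_0$ deciding it lands in $M$; to obtain such a deciding $q$ in the first place I feed the hypothesis $\dot f\restriction\check a\in V$ with the ground-model countable set $a=\kappa\cap M$. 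The c.c.c.\ was used a second time to upgrade ``$q$ forces $\dot f\restriction M=\check g\restriction M$'' to ``$q\Vdash\dot f=\check g$''; in its place I would work inside a generic extension $V[G]$ with $q\in G$, where masterhood gives $M[G]\prec H_\theta^{V[G]}$ and $M[G]\cap\kappa=M\cap\kappa$, so a hypothetical disagreement $f\ne g$ would reflect by elementarity to a point of $\kappa\cap M$, contradicting the forced agreement there. Items (2) and (3) require no new idea: (3) is the word-for-word transcription of Theorem~\ref{cliquetheorem}, whose subsidiary claim that $B(\dot A,F)$ is an anticlique never used where $F$ originated, so it suffices to replace the single application of Y-c.c.\ by the filter clause for some $r\le q_0$; and (2) then drops out of (3) by exactly the compactness argument of Corollary~\ref{covercorollary}, with (3) substituted for Theorem~\ref{cliquetheorem}.

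The third appeal to the c.c.c., in item (4), is where I expect the only genuine obstacle. In Theorem~\ref{unboundedtheorem} the c.c.c.\ forced each maximal antichain $A_i\subseteq RO(P)\setminus F_i$ (disjoint from the $i$-th ultrafilter of $M$) to be countable, and this countability was indispensable both for enumerating $A_i$ as $\{a_i^j:j\in\gw\}$ and for letting $\tau(i)=j$ define a genuine element of $\baire$; for a non-c.c.c.\ Y-proper poset $A_i$ may be uncountable and the name $\tau$ is then meaningless. The remedy I would use is again masterhood: since $A_i\in M$ is a maximal antichain, $q_0$ forces $\dot G$ to meet $A_i$ in a point of $M$, so I may enumerate only the countable set $A_i\cap M$ and define $\tau(i)$ from that enumeration, with $q_0\Vdash\tau\in\baire$. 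The rest then follows the original verbatim: if $\tau$ failed to be unbounded, witnessed by some $r\le q_0$, the set $B$ of elements of $A_i$ compatible with $r$ would be a finite subset of $A_i\cap M$, hence an element of $M$ with $\sum B\ge r$, and the filter clause applied to $r$ together with a choice of $F_i\supseteq F$ would give the contradiction $\sum B\in F\subseteq F_i$ while $\sum B\notin F_i$. Atomlessness enters exactly as before, guaranteeing that each ultrafilter $F_i\in M$ is nowhere dense so that the infinite antichains $A_i$ exist at all. One could alternatively derive (4) abstractly from (2), (3) and the dichotomy for bounding posets, but that route still demands a separate verification that an atomless Y-proper poset adds a new real, which the direct construction of $\tau$ supplies for free.
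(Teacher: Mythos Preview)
Your proposal is correct and follows exactly the route the paper indicates: it says only that ``the proofs of Section~\ref{asection} immediately adapt,'' and you have carried out precisely those adaptations, correctly identifying the three appeals to c.c.c.\ and replacing each by the appropriate use of the master condition (values of $\dot f$ land in $M$, elementarity of $M[G]$ for the global agreement, and $q_0\le\sum(A_i\cap M)$ so that conditions below $q_0$ are incompatible with $A_i\setminus M$). The only point worth making explicit in your write-up of (4) is that $B\subset A_i\cap M$ holds because $r\le q_0$ is incompatible with every element of $A_i\setminus M$ by masterhood, not merely because $\tau$ is bounded.
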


The notion of Y-properness is particularly suitable for side condition type proper forcings. We discuss two classes of examples.

\cite{z:keeping} introduced the notion of ideal based forcings. Yorioka showed that ideal based forcings do not add random reals. We will now show that ideal based forcings are Y-proper. This class of forcings includes posets used for destroying S-spaces, forcing a five-element classification of directed partial orders of size $\aleph_1$, and others.

 First, the rather involved definitions must be carefully stated.

\begin{definition}
\label{tripledefinition}
An \emph{ideal based triple} is a triple $\langle U, \pprec, I\rangle$ such that the following are satisfied for $\pprec$:

\begin{enumerate}
\item[(1)] $U$ is a collection of finite subsets of $\gw_1$ and $\pprec$ is an ordering on it refining inclusion;
\item[(2)] whenever $a\in U$ and $\gb\in\gw_1$ then $a\cap\gb\in U$ and $a\cap\gb\pprec a$;
\end{enumerate}

\noindent and the following are satisfied about $I$:

\begin{enumerate}
\item[(3)] $I$ is an ideal on $\gw_1$ including all singletons;
\item[(4)] every $I$-positive set has a countable $I$-positive subset;
\item[(5)] for every $a\in U$ the set $\{\gb\in \gw_1\colon a\pprec a\cup \{\gb\}\}$ is not covered by countably many elements of $I$;
\item[(6)] for every $a\in U$ the set $\{\gb\in\gw_1\colon a\cap\gb\pprec (a\cap \gb)\cup\{\gb\}$ and $a\not\pprec a\cup\{\gb\}\}$ is in $I$.
\end{enumerate}
\end{definition}

\begin{definition}
Given an ideal based triple $\langle U, \pprec, I\rangle$, the associated ideal based forcing
$P$ is defined as follows. A condition $p\in P$ is a finite set of ordered pairs $\langle M, \ga\rangle$ such that $M\prec H_\lambda$ is a countable elementary submodel for some fixed $\lambda$ such that $I\in H_\lambda$, $\ga$ is a countable ordinal which does not belong to $\bigcup (I\cap M)$, and

\begin{itemize}
\item $w(p)=\{\ga\colon\exists M\ \langle M, \ga\rangle\in p\}\in U$;
\item whenever $\langle M, \ga\rangle$ and $\langle N, \gb\rangle$ are distinct elements of $p$ then either
$M\in N$ and $\ga\in N$, or $N\in M$ and $\gb\in M$.
\end{itemize}

\noindent The ordering on the poset $P$ is defined by $p\geq q$ if $p\subset q$ and $w(p)\pprec w(q)$. 
\end{definition}

\begin{theorem}
\label{idealbasedtheorem}
If $P$ is an ideal based forcing then $P$ is Y-proper.
\end{theorem}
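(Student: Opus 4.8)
The plan is to treat $P$ as a side-condition forcing: the Y-master condition is obtained by adjoining (the $H_\lambda$-part of) $M$ itself as a new model, and both clauses of Y-properness are then read off from a single restriction-and-amalgamation lemma. Fix a countable $M\prec H_\theta$ containing $P$ and the triple $\langle U,\pprec,I\rangle$, and a condition $p\in P\cap M$. Put $N=M\cap H_\lambda$, so that $N\prec H_\lambda$ is countable with $\gd:=N\cap\gw_1=M\cap\gw_1$. Since $I\in N$ and $N$ is countable, $\bigcup(I\cap N)$ is a countable union of members of $I$, and as $I$ contains all singletons it contains every ordinal below $\gd$. Applying clause (5) of Definition~\ref{tripledefinition} to $w(p)$, the set $\{\gb\colon w(p)\pprec w(p)\cup\{\gb\}\}$ is not covered by countably many members of $I$, so I would choose $\ga$ in it with $\ga\notin\bigcup(I\cap N)$, whence $\ga\geq\gd$. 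Set $q=p\cup\{\langle N,\ga\rangle\}$. This is a condition with $q\leq p$: $w(q)=w(p)\cup\{\ga\}\in U$ because $w(p)\pprec w(p)\cup\{\ga\}$, the new pair is legitimate as $\ga\notin\bigcup(I\cap N)$, and the coherence clause holds because $p\in N$ forces every pair of $p$ to lie in $N$ with ordinal below $\gd$, so the first disjunct applies against $\langle N,\ga\rangle$.

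The engine of the proof is the following lemma. For $r\leq q$ put $r\restriction N=\{\langle M',\gb\rangle\in r\colon M'\in N\}$; by coherence against $\langle N,\ga\rangle$ these are exactly the pairs of $r$ with $\gb<\gd$, each of which lies in $N$, so $r\restriction N\in P\cap N$, with weight $w(r)\cap\gd\in U$ by clause (2). The lemma asserts that for every $s\in P\cap N$ with $s\leq r\restriction N$ the union $s\cup r$ is again a condition, hence a common lower bound of $s$ and $r$. Model coherence in $s\cup r$ is immediate, since below $N$ one has the $\in$-chain of $s$ (which contains that of $r\restriction N$ because $s\leq r\restriction N$) and above $N$ the $\in$-chain of $r$, glued at $N$. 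The content is that $w(s)\cup w(r)\in U$ with $w(s)\pprec w(s)\cup w(r)$ and $w(r)\pprec w(s)\cup w(r)$, obtained by adjoining the finitely many ordinals of $w(r)\setminus w(s)$ to $w(s)$ and the finitely many ordinals of $w(s)\setminus w(r)$ to $w(r)$ and checking each $\pprec$-step. Clauses (2), (5) and (6) are exactly the tools for these steps, and the reason the steps succeed is that the ordinals of $r$ above $\gd$ are generic over $N$: each such ordinal $\gg$ is paired in $r$ with a model $M''\ni N$ and satisfies $\gg\notin\bigcup(I\cap M'')$, so it avoids every ideal set coded in $M''$, and in particular the obstructions produced by clause (6) for the finite sets built from $w(s)\in N$ and the smaller ordinals of $r$, all of which lie in $M''$. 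I expect this amalgamation—verifying that the two weights fuse into a single member of $U$ lying $\pprec$-above both—to be the main obstacle, as it is the only point where the full strength of clauses (2)--(6) is used.

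Granting the lemma, both requirements of Y-properness follow quickly. For the master condition property, let $D\in M$ be dense open (note $D\in N$) and $r\leq q$; since $r\restriction N\in N$, elementarity of $N$ gives $s\in D\cap N\subseteq D\cap M$ with $s\leq r\restriction N$, and the lemma makes $s\cup r$ a common extension, so $s$ is compatible with $r$. Thus $D\cap M$ is predense below $q$ and $q$ is master for $M$. For the filter clause, fix $r\leq q$ and let $F=\{t\in RO(P)\colon t\geq r\restriction N\}$; as $r\restriction N\in N\subseteq M$ we have $F\in M$, and $F$ is a filter. It remains to check that every $s\in RO(P)\cap M$ with $s\geq r$ satisfies $s\geq r\restriction N$. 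If not, then $(r\restriction N)\wedge(-s)\neq 0$, and since this element lies in $M$, density of $P$ furnishes $u\in P\cap M=P\cap N$ with $u\leq r\restriction N$ and $u\wedge s=0$. By the lemma $u\cup r$ is a nonzero condition below both $u$ and $r$; but then $u\cup r\leq u\leq -s$ while $u\cup r\leq r\leq s$, a contradiction. Hence $\{s\in RO(P)\cap M\colon s\geq r\}\subseteq F$, and $q$ is a Y-master condition for $M$, as required.
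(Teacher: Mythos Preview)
Your amalgamation lemma is the crux, and its proof sketch does not go through. You want to show $w(s)\pprec w(s)\cup w(r)$ by adjoining the ordinals $\gamma_0<\dots<\gamma_{k-1}$ of $w(r)\setminus\gd$ to $w(s)$, invoking clause~(6) at each step and using that $\gamma_i\notin\bigcup(I\cap M_i)$. But observe that $\gamma_i>\max(a_i)$ where $a_i=w(s)\cup\{\gamma_0,\dots,\gamma_{i-1}\}$: every element of $w(s)$ is below $\gd$, and each $\gamma_j$ with $j<i$ lies in $M_i$ while $\gamma_i\geq M_i\cap\gw_1$. Hence $a_i\cap\gamma_i=a_i$, and the hypothesis of clause~(6) for $\beta=\gamma_i$ reads $a_i\pprec a_i\cup\{\gamma_i\}$, which is exactly the conclusion you are after. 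So membership in the obstruction set is automatically false for $\gamma_i$, and clause~(6) tells you nothing. Clause~(5) only asserts that $\{\gamma: a_i\pprec a_i\cup\{\gamma\}\}$ is not covered by countably many $I$-sets; it does not say the complement is in $I$, so the genericity of $\gamma_i$ over $M_i$ does not force $\gamma_i$ into the good set. In particular already for $i=0$ there is no reason $w(s)\pprec w(s)\cup\{\ga\}$ should hold: $\ga$ was chosen for $w(p)$, not for the unforeseeable $w(s)$.

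What your lemma asserts is precisely that $q$ is a \emph{strong} master condition in the sense of Mitchell; were that true, ideal-based forcings would be $\Phi$-proper for every regularity property $\Phi$ by the remark in Section~\ref{generalsection}, and no further argument would be needed. The paper's route is genuinely different and necessary: rather than using the principal filter above $r\cap M$, it defines a set $A\subset P$ to be $(r\cap M)$-large via a game in which Player~II \emph{chooses} the ordinals $\ga_k$ in response to $I$-sets played by Player~I, and proves that $\{\sum A: A\text{ is }(r\cap M)\text{-large}\}$ is centered. The freedom to pick the $\ga_k$'s is exactly what lets one arrange, inside the centeredness proof, that $a_i\cup\{\ga_0,\dots,\ga_{l-1}\}\pprec a_i\cup\{\ga_0,\dots,\ga_l\}$ at each stage (this is where clause~(6) is actually used, with $\ga_l$ selected from an $I$-positive set in $M_{n-i}$). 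The resulting filter $F\in M$ is not principal, and the verification that it contains every $s\geq r$ in $M$ goes by showing that the open dense set below $s$ is $(r\cap M)$-large, witnessed by Player~II replaying the ordinals of $r\setminus M$.
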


\begin{proof}
Fix the ideal based triple $\langle U, \pprec, I\rangle$ generating the poset $P$.
Let $p\in P$ be a condition. Say that a set $A\subset P$ is $p$-large if Player II has a winning strategy in the following game $G(A,p)$.
Player I starts out with a countable set $z\in H_{\lambda}$. Then, Players I and II alternate for $\gw$ many rounds, Player I starts round $k$ with a set $b_k\in I$
and Player II answers with a countable ordinal $\ga_k\notin b_k$ such that $\ga_0\in \ga_1\in\dots$
Player II wins if there is a number $l$ and a condition $q\leq p$
such that $w(q)=w(p)\cup\{\ga_k:k\in l\}$, the $\in$-least model $M$ on $q\setminus p$ contains $z$ as an element and $w(q)\cap M=w(p)$, and there is $r\in A$ such that $r\geq q$. Note that the game is open for Player II and therefore determined.

\begin{claim}
\label{aclaim}
The collection $\{\sum A:A$ is $p$-large$\}\subset RO(P)$ is centered.
\end{claim}

\begin{proof}
Let $\{A_i\colon i\in n\}$ be a collection of $p$-large sets; we must find conditions $p_i\in A_i$ for each $i\in n$ with a common lower bound.

Let $\langle M_i\colon i\in n+1\rangle$ be an $\in$-chain of countable elementary submodels of some large $H_\theta$ with $P, p, I, A_i$ for $i\in n$ all elements of $M_0$. By induction
on $i\in n$, we will construct conditions $p_i\in A_i, q_i$ such that

\begin{itemize}
\item $p, p_i$ are both weaker than $q_i$, $q_i\in M_{n-i}$, and the $\in$-least model on $q_i\setminus p$ contains $M_{n-i-1}\cap H_\lambda$ as an element;
\item for each $i$, $r_i=\bigcup_{j\in i}q_j$ is a condition in $P$ which is a lower bound of all conditions $q_j$ for $j\in i$.
Moreover, $w(r_i)\cap M_{n-i}=w(p)$.
\end{itemize}

Suppose that conditions $p_j, q_j$ have been constructed for $j\in i$. Write $a_i=w(r_i)$. Work in the model $M_{n-i}$. Let $\gs_i$ be a winning strategy for Player II in the game $G(A_i, p)$. We will produce an infinite play of the game such that 

\begin{itemize}
\item the initial move of Player I is $M_{n-i-1}\cap H_\lambda$; 
\item all moves are in the model $M_{n-i}$;
\item writing $e_l=\{\ga_k:k\in l\}$, for every $l$ we have $a_i\pprec a_i\cup e_l$.
\end{itemize}

The play is easy to construct by induction on $l\in\gw$. Suppose the first $l$ moves have been constructed, producing a play $t_l$.
Write $c=\{\ga\in\gw_1:$ for some set $b\in I$ the strategy $\gs_i$ answers the play $t_l^\smallfrown b$ with $\ga\}$.
Thus, $c\in M_{n-i}$. Note that for every ordinal $\ga\in c$, $w(p)\cup e_l\pprec w(p)\cup e_l\cup\{\ga\}$: since there is a play in which Player II wins, producing a condition $r$ such that $w(r)$ contains $w(p)\cup e_l\cup\{\ga\}$ as an initial segment, this follows from (2) of Definition~\ref{tripledefinition}. Also, $c$ is an $I$-positive set: if it were an element of $I$, then Player I could play a set containing $c$, forcing
the strategy $\gs_i$ to answer with an ordinal out of $c$, contradicting the definition of $c$. By (4) of Definition~\ref{tripledefinition}, the set $c$ contains an $I$-positive countable subset $d\subset c$, and this set $d$ can be found in the model $M_{n-i}$.
By (6), there is an ordinal $\ga_l\in d$ such that
$a_i\cup\{\ga_k:k\in l\}\pprec a_i\cup\{\ga_k:k\in l+1\}$. Find a move $b_l\in M_{n-i}$ provoking the strategy $\gs_i$ to answer $\ga_l$ and let $t_{l+1}=t_l^\smallfrown b_l^\smallfrown \ga_l$. This concludes the induction step and the construction of the play.

Now, since $\gs_i$ is a winning strategy for Player II, there is a natural number $l$ and conditions $p_i\in A_i$ and $q_i$
such that $q_i\leq p_i, p$, $q_i\in M_{n-i}$, and $w(q_i)=w(p)\cup e_l$. Consider the set $r_{i+1}=\bigcup_{j\leq i}q_j$.
The set $r_{i+1}$ is a condition in the poset $P$ smaller than $r_i$ by the third item of the construction of the infinite play. Also, $r_{i+1}\leq q_i$ by (2) of Definition~\ref{tripledefinition}. This concludes the induction step of the induction on $i$ and the proof of the claim.
\end{proof}

Now we are ready to verify Y-properness for the poset $P$. Let $M\prec H_\theta$ be a countable elementary submodel containing
$U, \pprec, I$, let $p\in P\cap M$. Find an ordinal $\ga\in\gw_1\setminus \bigcup (I\cap M)$ such that $w(p)\pprec w(p)\cup \{\ga\}$; such ordinal exists by Definition~\ref{tripledefinition}(5). Let $q=p\cup \{\langle  M\cap H_{\aleph_2}, \ga\rangle\}$. It is not difficult to see that $q\leq p$. \cite{z:keeping} shows that $q$ is a master condition for $M$. We shall show that $q$ is a Y-master condition for the model $M$.

 Suppose that $r\leq q$ is an arbitrary condition. Observe that $r\cap M\in P$ is a condition weaker than $r$. Let $F\in M$ be a filter on $RO(P)$
extending the centered system $\{\sum A:A$ is $r\cap M$-large$\}$. We claim that for every condition $s\in RO(P)\cap M$ such that
$s\geq r$, $s\in F$ holds. This will conclude the proof.

Indeed, let $A=\{t\in P:t\leq s\}$. Since $P$ is dense in $RO(P)$, it is clear that $\sum A=s$. To conclude the proof, it will be enough to show that $A$ is $r\cap M$-large. Suppose that it is not. The game $G(A, r\cap M)$ is determined, Player II has no winning strategy, therefore Player I has a winning strategy, and such a strategy $\gs$ has to exist in the model $M$ as $r\cap M, s\in M$. Note that the strategy $\gs$ is in $H_{\aleph_2}$, and so it belongs to all models on $r\setminus M$. The definition of the poset $P$ shows that Player II can defeat the strategy by playing the ordinals in $w(r)\setminus M$ in increasing order, since then the condition $r\leq s$ will witness the defeat of Player I at the appropriate finite stage. This is the final contradiction.
\end{proof}

Another class of Y-proper posets comes from the usual way of forcing the P-ideal dichotomy, PID \cite{todorcevic:pid, todorcevic:bsl}.
Let $X$ be a set and $I\subset [X]^{\leq \aleph_0}$ be a P-ideal containing all singletons. This means that for every countable set
$J\subset I$ there is a set $a\in I$ such that for every $b\in J$, $b\subset^* a$. Suppose that $X$ is not a countable union
of sets $Y_n$ for $n\in\gw$ such that $\power(Y_n)\cap I=[Y_n]^{<\aleph_0}$. Then there is a proper poset $P$ adding
an uncountable set $Z\subset X$ such that $[Z]^{\aleph_0}\subset I$, which we now proceed to define.

For simplicity assume that the underlying set $X$ is a cardinal $\kappa$. Let $K$ be the $\gs$-ideal on $X$ generated by those sets $Y\subset X$ such that $I\cap\power(Y)= [Y]^{<\aleph_0}$.
Thus, the assumptions imply that $X\notin K$.
The poset $P$ consists of conditions $p$, which are finite sets of triples $\langle M, x, a\rangle$ such that
$M\prec H_{\kappa^+}$ is a countable elementary submodel, $x\in X$ is a point which does not belong to $\bigcup (K\cap M)$,
and $a\in I$ is a set which modulo finite contains all sets in $I\cap M$. Moreover, if $\langle M, x,a\rangle$ and $\langle N, y, b\rangle$ are distinct elements of $p$, then either $M, x, a\in N$ or $N, y, b\in M$. The ordering is defined by $q\leq p$ if
$p\subseteq q$ and whenever $\langle M, x, a\rangle\in q\setminus p$ and $\langle N, y, b\rangle\in p$ are such that $M\in N$
then $x\in b$. As in the ideal-based case, for a condition $p\in P$ we write $w(p)=\{x\in X:\exists M, a\ \langle M, x, a\rangle\in p\}$.

\begin{theorem}
\label{pidtheorem}
The PID poset $P$ is Y-proper.
\end{theorem}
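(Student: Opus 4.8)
The plan is to follow the proof of Theorem~\ref{idealbasedtheorem} almost verbatim, replacing the ideal $I$ that governs the admissible points by the $\sigma$-ideal $K$, and replacing the uses of properties (4) and (6) of the ideal based triple by the P-ideal property of $I$. First I would define, for a condition $p\in P$ and a set $A\subseteq P$, what it means for $A$ to be \emph{$p$-large}: Player II has a winning strategy in the game $G(A,p)$ in which Player I opens with a countable set $z\in H_{\kappa^+}$ and thereafter plays sets $b_k\in K$, Player II answers with points $x_k\in X\setminus b_k$ satisfying $x_0\in x_1\in\cdots$, and Player II wins if for some finite $l$ there is a condition $q\le p$ such that $w(q)=w(p)\cup\{x_k:k\in l\}$, the $\in$-least model $M'$ occurring in $q\setminus p$ contains $z$ and satisfies $w(q)\cap M'=w(p)$, and some $r\in A$ satisfies $r\ge q$. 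As before, this game is open for Player II and hence determined.

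Next I would prove the analogue of Claim~\ref{aclaim}: the set $\{\sum A:A\text{ is }p\text{-large}\}\subseteq RO(P)$ is centered. Given finitely many $p$-large sets $A_i$, fix an $\in$-chain $M_0\in\cdots\in M_n$ of countable elementary submodels with $P,p,K,I,A_i$ all in $M_0$, and build conditions $q_i$ together with partial lower bounds by induction, exactly as in the ideal based case: inside the appropriate model one runs Player II's winning strategy against plays whose opening move is the neighbouring model and all of whose moves lie in that model. The set $c$ of legal Player II answers at each round is $K$-positive — were $c$ a member of $K$, Player I could play it as a single move and drive the strategy out of it — so by elementarity a legal answer can be chosen inside the model and the play continues.

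The one genuinely new feature, absent from Theorem~\ref{idealbasedtheorem}, is the ordering clause demanding $x\in b$ whenever a new, lower triple $\langle M,x,a\rangle$ lies below an old triple $\langle N,y,b\rangle$. This is where the P-ideal property enters, and I expect it to be the main obstacle: the points are placed $K$-generically, so they cannot be steered into a prescribed countable $I$-set, while the ordering insists that each lower point land \emph{exactly} in every relevant higher $a$-set. The resolution uses that the winning condition of $G(A,p)$ pins down only the points and the models of $q$, while the $a$-set attached to each model is constrained merely to lie in $I$ and to contain, modulo finite, the members of $I\cap M$. I would therefore choose these $a$-sets along the $\in$-chain from the bottom up, taking each higher model's $a$-set in $I$ so as both to dominate the countably many $I$-sets of that model (possible since $I$ is a P-ideal) and to contain the finitely many points already committed at the lower models. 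Enlarging an $a$-set by finitely many points keeps it in $I$ and preserves domination, so the assembled union is a genuine condition below every $q_i$, which is the common lower bound witnessing centeredness.

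Finally I would deduce Y-properness as in the ideal based case. Given $M\prec H_\theta$ containing $U,\pprec,I$ and $p\in P\cap M$, pick $x\in X\setminus\bigcup(K\cap M)$, which exists because $X\notin K$ while $K\cap M$ is a countable subfamily of the $\sigma$-ideal $K$, and pick $a\in I$ dominating $I\cap M$ by the P-ideal property; then $q=p\cup\{\langle M\cap H_{\kappa^+},x,a\rangle\}$ is a master condition with $q\le p$. For an arbitrary $r\le q$ let $F\in M$ be a filter extending the centered system $\{\sum A:A\text{ is }r\cap M\text{-large}\}$, and for $s\in RO(P)\cap M$ with $s\ge r$ put $A=\{t\in P:t\le s\}$, so that $\sum A=s$; it suffices to show that $A$ is $r\cap M$-large. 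If it were not, determinacy would give Player I a winning strategy in $G(A,r\cap M)$, which can be found in $M$ and lies in $H_{\kappa^+}$, hence belongs to every model occurring in $r\setminus M$. But then Player II defeats it by playing the points of $w(r)\setminus M$ in increasing order, since at the appropriate finite stage the condition $r\le s$ witnesses her victory — the final contradiction.
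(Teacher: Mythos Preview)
Your outline of the game $G(A,p)$, the determinacy argument, and the final verification that $q=p\cup\{\langle M\cap H_{\kappa^+},x,a\rangle\}$ is a $Y$-master condition are all correct and match the paper's proof. The gap is in the centeredness claim, precisely at the point you flag as the main obstacle.

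Your proposed resolution is to leave the points $x_k$ unsteered and then, after the fact, enlarge the $a$-sets of the higher triples so that they swallow the finitely many lower points. This does not work, because the ordering on $P$ demands literal containment $q_i\subseteq s$ of triples, and a triple is the full datum $\langle M,x,a\rangle$. If you replace $a$ by $a\cup\{\text{new points}\}$ you obtain a \emph{different} triple, so the resulting ``assembled union'' $s$ no longer contains $q_i$ as a subset and hence $s\not\leq q_i$. Worse, you cannot simply aim for $s\leq p_i$ directly either: $p_i\in A_i$ is an actual condition whose triples (with their fixed $a$-sets) must sit inside $s$, and the ordering then forces every lower point of $s$ to lie in those fixed $a$-sets of $p_i$. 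No post-hoc enlargement on your side can alter the $a$-sets that $p_i$ brought with it.

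The paper's fix is exactly the move you dismissed: the points \emph{can} be steered into the prescribed set $a_i=\bigcap\{a: a\text{ occurs on }r_i\setminus p\}$. The key observation you are missing is the definition of $K$: a set $c\notin K$ must contain an \emph{infinite} subset $d\in I$ (otherwise $c$ would be a generator of $K$). Choosing such $d$ in the working model $M_{n-i}$ gives $d\in I\cap M_{n-i}$, and since every $a$-set on $r_i\setminus p$ modulo-finite contains every member of $I\cap M_{n-i}$, the intersection $d\cap a_i$ is cofinite in $d$, hence nonempty. A point chosen there is simultaneously a legal answer for the strategy $\sigma_i$ and an element of $a_i$, which is exactly what makes $r_{i+1}=\bigcup_{j\leq i}q_j$ a genuine lower bound of $r_i$. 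So the P-ideal property enters not to adjust the $a$-sets but, via the interplay between $K$-positivity and $I$, to steer the points.
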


\begin{proof}
Suppose that $p\in P$ is a condition and $A\subset P$ is a set. Say that $A$ is $p$-large if Player II has a winning strategy in the following game $G(A, p)$. In the game, Player I starts with a set $z\in H_{\kappa^+}$, and then Player I and II alternate for $\gw$ many rounds. At round $k$, Player I plays a set $Y_k\in K$ and Player II answers with a point $x_k\in X\setminus Y_k$.
Player II wins if at some round $l\in\gw$ there are conditions $q\in P$ and $r\in A$ such that $q$ is a lower bound of $p,r$, $w(q)=w(p)\cup\{x_k\colon k\in l\}$, and the $\in$-first model $M$ on $q\setminus p$ contains the set $z$, and $w(q)\cap M=w(p)$. Note that the game is open for Player II and therefore determined.

\begin{claim}
The set $\{\sum A\colon A$ is $p$-large$\}\subset RO(P)$ is centered.
\end{claim}

\begin{proof}
Let $\{A_i\colon i\in n\}$ be a collection of $p$-large sets; we must find conditions $p_i\in A_i$ for each $i\in n$ with a common lower bound.

Let $\langle M_i\colon i\in n+1\rangle$ be an $\in$-chain of countable elementary submodels of some large $H_\theta$ with $M_0$ containing $X, I, P, p, A_i$ for $i\in n$ as elements. By induction
on $i\in n$, we will construct conditions $p_i\in A_i, q_i$ such that

\begin{itemize}
\item $p, p_i$ are both weaker than $q_i$, $q_i\in M_{n-i}$, and the $\in$-least model on $q_i\setminus p$ contains $M_{n-i-1}\cap H_{\kappa^+}$ as an element;
\item for each $i$, $r_i=\bigcup_{j\in i}q_j$ is a condition in $P$ which is a lower bound of all conditions $q_j$ for $j\in i$,
and $w(r_i)\cap M_{n-i}=w(p)$.
\end{itemize}

Suppose that conditions $p_j, q_j$ have been constructed for $j\in i$. Write $a_i\subset X$ for the intersection of all
sets in the P-ideal $I$ which occur on $r_i\setminus p$. Observe that every set in $I\cap M_{n-i}$ is contained in $a_i$ up to finitely many exceptions. Work in the model $M_{n-i}$. Let $\gs_i$ be a winning strategy for Player II in the game $G(A_i, p)$. We will produce an infinite play of the game such that 

\begin{itemize}
\item the initial move of Player I is $M_{n-i-1}\cap H_{\kappa^+}$; 
\item all moves are in the model $M_{n-i}$;
\item all moves of Player II belong to the set $a_i$.
\end{itemize}

The play is easy to construct by induction on $l\in\gw$. Suppose the first $l$ moves have been constructed, producing a play $t_l$.
Write $c=\{x\in X:$ for some set $b\in K$ the strategy $\gs_i$ answers the play $t_l^\smallfrown b$ with $x\}$.
Thus, $c\in M_{n-i}$. Observe that $c\notin K$: if $c\in K$, then Player I could play the set $c$, forcing
the strategy $\gs_i$ to answer with a point out of $c$, contradicting the definition of $c$. By the definition of the ideal $K$, the set $c$ contains an infinite countable set $d\subset c$ in the P-ideal $I$, and this set $d$ can be found in the model $M_{n-i}$.
Thus, the intersection $a_i\cap d$ is nonempty, containing some element $x\in M_{n-i}$. Find a move $b_l\in M_{n-i}$ provoking the strategy $\gs_i$ to answer with $x$ and let $t_{l+1}=t_l^\smallfrown b_l^\smallfrown x$. This concludes the induction step and the construction of the play.

Now, since $\gs_i$ is a winning strategy for Player II, there is a natural number $l$ and conditions $p_i\in A_i$ and $q_i$
such that $q_i\leq p_i, p$, $q_i\in M_{n-i}$, and all points in $X$ appearing on $q_i\setminus p$ belong to the set $a_i$. 
It is immediate to verify that $r_{i+1}=\bigcup_{j\leq i}q_i$ is a lower bound of $r_i$ and $q_i$. This concludes the induction step of the induction on $i$ and the proof of the claim.
\end{proof}

Now suppose that $M\prec H_\theta$ is a countable elementary submodel containing $X,I$, and let $p\in P\cap M$ be any condition.
We must produce a Y-master condition $q\leq p$ for the model $M$. Let $x\in X$ be some point not in $\bigcup(K\cap M)$, and let
$a\in I$ be some set which modulo finite contains all sets in $I\cap M$; these objects exist by initial assumptions on the ideal $I$.
Let $q=p\cup\{\langle M\cap H_{\kappa^+}, x, a\rangle\}$. \cite{todorcevic:pid} shows that $q$ is a master condition for $M$. We will show that $q$ is a Y-master condition for the model $M$.

Let $r\leq q$ be a condition. Note that $r\cap M\in P$ is a condition weaker than $r$. Let $F\in M$ be any filter on $RO(P)$ extending the centered system $\{\sum A:A\subset P$ is $r\cap M$-large$\}$. We will show that for every condition $s\in RO(P)\cap M$, if $s\geq r$ then $s\in F$. This
will conclude the proof.

Indeed, suppose that $s\in RO(P)\cap M$ is a condition weaker than $r$. Let $A=\{t\in P:t\leq s\}\in M$ and argue that $A$ is $r\cap M$-large.
This will conclude the proof since $P$ is dense in $RO(P)$ and so $\sum A=s$ and $s\in F$. Suppose for contradiction that $A$ is not $r\cap M$-large. Since the game $G(A, r\cap M)$ is determined, there must be a winning strategy $\gs\in M$ for Player I in it. Now, let $\langle M_k, x_k, a_k\rangle:k\in l$ enumerate $r\setminus M$ in $\in$-increasing order and consider the counterplay of Player II
against the strategy $\gs$ in which Player II's moves are $x_k$ for $k\in l$ in this order. Note that the strategy $\gs$ belongs to all
models $M_k$ for $k\in l$ and so this is a legal counterplay. At the end of it, Player II is in a winning position, as witnessed by the condition $r\in A$. This contradicts the assumption that $\gs$ was a winning strategy for Player I.
\end{proof}

It is natural to ask which traditional fusion-type forcings are Y-proper. We do not have a comprehensive answer to this question.
Instead, we prove a rather limited characterization theorem which nevertheless illustrates the complexity of the question well. 
For an ideal $I$ on $\gw$ let $P(I)$ be the poset of all trees $T\subset\gwtree$ which
have a trunk $t$ and for every node $s\in T$ extending the trunk, the set $\{n\in\gw:s^\smallfrown n\in T\}$ does not belong to $I$. The ordering is that of inclusion. Rather standard fusion arguments show that posets of this form are all proper, and they preserve $\gw_1$ covers on compact Polish spaces consisting of $G_\gd$ sets.

\begin{theorem}
\label{firsttheorem}
Let $I$ be an ideal on $\gw$. If $I$ is the intersection of $F_\gs$-ideals, then $P(I)$ is Y-proper.
\end{theorem}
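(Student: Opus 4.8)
The plan is to reproduce, for the tree poset $P(I)$, the architecture of the proofs of Theorems~\ref{idealbasedtheorem} and~\ref{pidtheorem}: equip $P(I)$ with a game-theoretic notion of largeness, show that the Boolean sums of large sets are centered (so that a filter extending them can be found inside the ambient model), produce a fusion master condition, and defeat any Player~I strategy living in the model by reading a branch off an arbitrary extension of that master condition. The first step is to fix the submeasure data supplied by the hypothesis. By a theorem of Mazur an $F_\gs$-ideal is exactly a set of the form $\{A\subseteq\gw\colon\phi(A)<\infty\}$ for a lower semicontinuous submeasure $\phi$, so I would write $I=\bigcap_j I_j$ with $I_j=\{A\colon\phi_j(A)<\infty\}$. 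The decisive feature of this representation is that $I$-positivity is witnessed finitarily: $B\notin I$ exactly when, for some $j$, the finite subsets of $B$ carry unbounded $\phi_j$-mass. I would also record the elementary fact that $B\notin I$ and $Y\in I$ force $B\setminus Y\notin I$; this is what lets Player~II extend a stem legally inside any condition while dodging an $I$-set thrown by Player~I.

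Next I would define the game $G(A,p)$ for $A\subseteq P(I)$ and $p\in P(I)$ with trunk $t$. Player~I opens with a countable $z\in H_\theta$; then, keeping a current node $s_k$ with $s_0=t$, Player~I plays a set $Y_k\in I$ and Player~II answers with some $n_k\notin Y_k$ with $s_k{}^\smallfrown n_k\in p$, setting $s_{k+1}=s_k{}^\smallfrown n_k$. Player~II wins at stage $l$ if there is a condition $q\le p$ with stem $s_l$ realizing the play and some $t\in A$ with $q\le t$, together with the freshness clause relative to $z$ carried exactly as in the two theorems above in case the combining argument must be threaded along an $\in$-chain of models. The game is open for Player~II, hence determined; call $A$ \emph{$p$-large} if Player~II has a winning strategy. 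The substantial step is the centeredness claim, that $\{\sum A\colon A\text{ is }p\text{-large}\}\subseteq RO(P(I))$ is centered: given finitely many large sets $A_i$ with Player~II strategies $\gs_i$, I must interleave the $\gs_i$ into a single fusion tree $q\le p$ below which every $A_i$ is predense. The genuinely nontrivial obligation is to keep each splitting node of $q$ $I$-positive, and this is exactly where the submeasure representation is used: probing $\gs_i$ with all legal $I$-set moves yields its response set $c_i$, which is forced to be $I$-positive (else Player~I covers it and contradicts $\gs_i$), and the lower semicontinuous $\phi_j$-mass estimates are what guarantee that the successor sets assembled from these responses stay outside $I$ as the fusion finalizes node after node.

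Finally I would build the master condition and verify Y-properness. Given $M\prec H_\theta$ containing $I$ and a condition $p\in P(I)\cap M$, a routine fusion against an enumeration of the dense subsets of $P(I)$ lying in $M$ yields a master condition $q\le p$; since the family $\{\sum A\colon A\text{ is }p\text{-large}\}$ is definable from the parameters $p,I\in M$, a filter $F\in M$ extending it exists. For an arbitrary $r\le q$ and any $s\in RO(P(I))\cap M$ with $s\ge r$, set $A=\{u\in P(I)\colon u\le s\}\in M$, so that $\sum A=s$ by density of $P(I)$ in its completion; it then suffices to show $A$ is $p$-large. If it were not, determinacy would hand Player~I a winning strategy $\gs\in M$, which I would defeat by having Player~II descend along a branch of $r$: as $r\le q\le p$ and $r\le s$, the branch is a legal play in $G(A,p)$, and since the branching sets of $r$ are $I$-positive Player~II can dodge every $Y_k$; at the appropriate finite stage the subtree $r_{s_l}$ satisfies $r_{s_l}\le r\le s$, hence $r_{s_l}\in A$ and witnesses the win, contradicting the choice of $\gs$. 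Thus $s\in F$ and $q$ is a Y-master condition.

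I expect the centeredness claim to be the main obstacle, and within it the positivity bookkeeping of the fusion: ensuring that the merged splitting sets remain $I$-positive across the infinitely many refinements is precisely the step that collapses for a general ideal and goes through here only because the $F_\gs$ hypothesis furnishes lower semicontinuous submeasures, turning positivity into a finitely witnessed, fusion-stable condition.
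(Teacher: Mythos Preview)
Your approach has a genuine gap at the final step, and it cannot be repaired without changing the overall architecture. You fix a single filter $F\in M$ extending $\{\sum A\colon A\text{ is }p\text{-large}\}$ and then claim that for \emph{every} $r\le q$ and every $s\in RO(P(I))\cap M$ with $s\ge r$, the set $A=\{u\colon u\le s\}$ is $p$-large. This is false. In your game $G(A,p)$ Player~II starts at the trunk $t$ of $p$ and must choose successors avoiding the $I$-sets Player~I plays; but singletons lie in $I$, so Player~I can simply play $\{t_r(|t|)\},\{t_r(|t|+1)\},\dots$, where $t_r$ is the trunk of $r$, and thereby force Player~II off the unique path from $t$ to $t_r$. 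Once that happens Player~II can never reach a condition below $s$ when $s$ is, say, the Boolean sum of all conditions with trunk extending $t_r$---a condition which is certainly above $r$ and in $M$. Hence $A$ is not $p$-large, $s\notin F$, and the proposed $F$ fails to witness Y-masterness for this $r$. The analogy with Theorems~\ref{idealbasedtheorem} and~\ref{pidtheorem} breaks down precisely here: in those proofs the filter is chosen relative to $r\cap M\in M$, not relative to $p$, and there is no analogue of $r\cap M$ for trees (all nodes are in $M$, so $r\cap M=r\notin M$).

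The paper's proof is organized quite differently and avoids games altogether. For each tree $T$ it builds an ultrafilter $F(T)$ on $RO(P)$ from the centered system of ``thinnings'' $T_f$, and then, using the $F_\gs$ hypothesis at each node to pick an auxiliary ultrafilter on $\gw$ disjoint from a suitable $F_\gs$-superideal, assembles ultrafilters $G(T)$ with the property that membership in $G(T)$ is witnessed by all but finitely many immediate successors. A fusion inside $M$ then produces a master condition $S$ such that for every node $s\in S$ there is an ultrafilter $F(s)\in M$ with the feature that every $p\in F(s)\cap M$ lies above $S\restriction s^\smallfrown i$ for cofinitely many $i$. The filter witnessing Y-masterness for a given $r\le S$ is then $F(s)$ where $s$ is the trunk of $r$---so the filter is allowed to depend on $r$ via its trunk, which is exactly what your single-filter scheme lacks.
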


In particular, Laver forcing is Y-proper. The implication in Theorem~\ref{firsttheorem} cannot be reversed already for $\mathbf{\gS}^0_4$ ideals. The ideal $I$ on $\gw\times\gw$ generated by vertical sections and sets with all vertical sections finite is not the intersection of $F_\gs$-ideals, but the poset $P(I)$ is Y-proper. The simplest example in which we do not know how to check the status of Y-properness is $I=$the ideal of nowhere dense subsets on $\bintree$.

\begin{theorem}
\label{secondtheorem}
Let $I$ be an analytic P-ideal on $\gw$. The following are equivalent:

\begin{enumerate}
\item $I$ is the intersection of $F_\gs$-ideals;
\item $P(I)$ is Y-proper;
\item for every compact Polish space $X$ and every open set $H\subset X^\gw$, every $H$-anticlique in the $P(I)$-extension is covered by countably many
$H$-anticliques in the ground model.
\end{enumerate}
\end{theorem}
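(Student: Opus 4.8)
The plan is to prove the cycle of implications $(1)\Rightarrow(2)\Rightarrow(3)\Rightarrow(1)$. The first implication is exactly Theorem~\ref{firsttheorem}, and the second is the special case of Theorem~\ref{yproperpreservationtheorem}(3) obtained by noting that a compact Polish space is second countable. Thus all the genuine work lies in $(3)\Rightarrow(1)$, which I would prove in contrapositive form: assuming that $I$ is \emph{not} the intersection of $F_\gs$-ideals, I would exhibit a compact Polish space $X$, an open set $H\subseteq X^\gw$, and an $H$-anticlique in the $P(I)$-extension that is not covered by countably many ground model $H$-anticliques.

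The first step is to convert the hypothesis into a usable combinatorial form. Using Solecki's representation I would write $I=\mathrm{Exh}(\phi)$ for a lower semicontinuous submeasure $\phi$, and then establish a dichotomy lemma reading the submeasure analysis behind Theorem~\ref{firsttheorem} in the opposite direction: $I$ is the intersection of $F_\gs$-ideals if and only if the trace $\gs$-ideal associated with $P(I)$ is c.c.c.\ below every condition. Equivalently, the failure of (1) produces, below every condition $p\in P(I)$, an uncountable family of pairwise incompatible refinements whose branch sets are ``spread out'' with respect to $\phi$. This lemma is where the analytic P-ideal structure is genuinely used, and it is the crux of the whole proof.

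Granting the lemma, I would take the ambient compact space to be $X_0=(\gw+1)^\gw$, the countable product of one-point compactifications of $\gw$; this is compact metrizable and contains $\baire$, so it can capture the unbounded, non-dominated generic branch $\dotxgen\in\baire$ of $P(I)$. I then let $X=K(X_0)$ be the hyperspace of compact subsets with the Vietoris topology and take $H\subseteq X^\gw$ to be the open disjointness graph $H=\{\langle K_n\colon n\in\gw\rangle\colon K_0\cap K_1=0\}$. Let $\dot A$ name the set of all ground model compact subsets of $X_0$ containing $\dotxgen$; since all its elements share the point $\dotxgen$, $\dot A$ is forced to be an $H$-anticlique. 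To show it is not covered, I would mirror the non-covering argument from the theorem on bounding posets and c.c.c.\ points above: any ground model $H$-anticlique, being a pairwise intersecting family, contains at most one member of any pairwise disjoint family. The dichotomy lemma supplies, below any condition $q$, an uncountable pairwise disjoint family $\{K_\xi\colon\xi\in\gw_1\}$ of compact sets each forced by some condition compatible with $q$ to contain $\dotxgen$; if countably many ground model anticliques $B_n$ were claimed to cover $\dot A$, then all but countably many $K_\xi$ avoid $\bigcup_n B_n$, and the condition forcing $\dotxgen\in K_\xi$ witnesses $K_\xi\in\dot A\setminus\bigcup_n B_n$, the desired contradiction.

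The \emph{main obstacle} is the dichotomy lemma of the second step, and within it the direction converting the failure of the $F_\gs$-intersection property into the uncountable pairwise disjoint positive families needed below every condition. Two points demand care. First, the submeasure-theoretic translation of ``$I$ is not an intersection of $F_\gs$-ideals'' into the spreading property of $\phi$ on $I$-positive sets. Second, arranging the compact sets $K_\xi=\overline{[p_\xi]}$ so that their closures in $X_0=(\gw+1)^\gw$ are genuinely pairwise disjoint, despite the fact that $I$-positive splitting forces the trees $p_\xi$ to branch unboundedly at every node, which tends to glue closures together at points with coordinate $\infty$. Passing through $(\gw+1)^\gw$ rather than $\baire$ is exactly what allows compact sets to capture the non-dominated generic branch, but one must then separate the $K_\xi$ using the $\phi$-spreading of the trunks rather than relying on their tails.
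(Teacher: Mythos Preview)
Your reduction to $(3)\Rightarrow(1)$ and the contrapositive set-up are correct, and invoking Solecki's representation is the right first move. However, the proposed construction in $X_0=(\gw+1)^\gw$ cannot be completed, and the obstacle you flag at the end is in fact fatal rather than merely delicate. If $T\leq q$ is any condition with trunk $t$, then every node of $T$ past $t$ has $I$-positively many (hence infinitely many) successors, so the closure $\overline{[T]}$ in $(\gw+1)^\gw$ necessarily contains the point $t^\smallfrown(\infty,\infty,\dots)$. Any compact $K\subseteq X_0$ with a condition forcing $\dotxgen\in K$ must contain $\overline{[T]}$ for some such $T$, and hence contains $t^\smallfrown(\infty,\infty,\dots)$ for some trunk $t$ extending the trunk of $q$. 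Since there are only countably many finite sequences, any uncountable family $\{K_\xi\colon\xi\in\gw_1\}$ of such sets has uncountably many members sharing a common trunk $t$, and all of those contain the same point $t^\smallfrown(\infty,\infty,\dots)$; they are therefore not pairwise disjoint. So the uncountable pairwise disjoint family your argument requires simply does not exist in this space, and no amount of ``$\phi$-spreading of the trunks'' can fix this, because trunks live in a countable set.

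The paper sidesteps this by choosing a different compact space: it fixes a countable $M\prec H_\theta$, takes $Y$ to be the Stone space of ultrafilters on $RO(P)\cap M$, sets $X=K(Y)$ and $H=\{\langle K_n\rangle\colon\bigcap_nK_n=0\}$, and lets $\dot A=\{K_T\colon T\in\dot G\}$ where $K_T=\{F\in Y\colon F\supseteq\{p\in RO(P)\cap M\colon p\geq T\}\}$. The key combinatorial input is not a c.c.c.\ dichotomy for a trace ideal but the observation that if $I=\mathrm{Exh}(\mu)$ is not an intersection of $F_\gs$-ideals, then for every $k$ one can partition $\gw$ into finitely many singletons and finitely many pieces of $\mu$-mass ${<}2^{-k}$; this drives a fusion that, given countably many anticliques $B_i$ with chosen ultrafilters $F_i\in\bigcap B_i$, builds a \emph{single} condition $S$ below the given $T$ with $K_S\notin\bigcup_iB_i$. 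No uncountable disjoint family is needed.
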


An analytic P-ideal is an intersection of $F_\gs$-ideals if and only if it is the intersection of countably many $F_\gs$-ideals. A good example of an analytic P-ideal which can be written as such an intersection and yet is not $F_\gs$ is $I=\{a\subset\gw\colon \forall \eps>0\ \sum_{n\in a} n^{-\eps}<\infty\}$. An example of an analytic P-ideal which is not an intersection of $F_\gs$-ideals is the ideal of sets of asymptotic density zero; in fact, the only $F_\gs$ ideal containing the density ideal is trivial, containing $\gw$ as an element.

\begin{proof}[Proof of Theorem~\ref{firsttheorem}]
 We will need a bit of notation. Write $P=P(I)$.  Let $T\in P$ be a tree with trunk $t$. For a function $f\colon\gwtree\to I$ write $T_f=\{s\in T\colon \forall i\in\dom(s\setminus t)\ s(i)\notin f(s\restriction i)\}$, observe that the trees
$\{T_f\colon f\in \gw^{(\gwtree)}\}$ form a centered system in $P$, and pick an ultrafilter $F(T)\subset RO(P)$ extending this centered system.

\begin{claim}
\label{directclaim}
For every element $p\in F(T)$ there is a tree $S\subset T$ with trunk $t$ such that $S\leq p$.
\end{claim}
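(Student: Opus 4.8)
The plan is to reformulate the claim as a single statement about the trunk and then prove it by a rank/amalgamation argument rather than by naive pruning. Call a node $s\in T$ with $t\subseteq s$ \emph{good} if there is a condition $S\in P$ with trunk $s$, consisting of nodes of $T$ above $s$, such that $S\le p$. The claim is then precisely the assertion that the trunk $t$ is good. First I would establish the dichotomy that drives everything: for each node $s$, \emph{either} $s$ is good, \emph{or} the set $G_s=\{n\in\gw\colon s^\smallfrown n\in T \text{ and } s^\smallfrown n \text{ is good}\}$ of its good immediate successors belongs to $I$. To prove it, suppose $G_s\notin I$; for each $n\in G_s$ pick a witness $S_n\le p$ with trunk $s^\smallfrown n$, and form the tree $S$ with trunk $s$ whose branching set at $s$ is exactly $G_s$ and which follows $S_n$ above each $s^\smallfrown n$. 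Since $G_s\notin I$ and each $S_n\in P$, we get $S\in P$; and since $\{S_n\colon n\in G_s\}$ is predense below $S$ with every $S_n\le p$, it follows that $S\le p$, so $s$ is good.

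With the dichotomy available, I would argue by contradiction, assuming $t$ is not good. Define $f\colon\gwtree\to I$ by setting $f(s)=G_s$ whenever $s$ is not good (this lies in $I$ by the dichotomy) and $f(s)=\emptyset$ otherwise, so that $f$ is a legitimate index for the centered system and $T_f$ is defined. A straightforward induction starting from the non-good trunk $t$ shows that every node of $T_f$ above $t$ is again non-good: at a non-good node $s$ the tree $T_f$ retains exactly the successors in $\mathrm{succ}_T(s)\setminus G_s$, which is $I$-positive because $\mathrm{succ}_T(s)\notin I$ and $G_s\in I$; hence $T_f\in P$ and all its nodes are non-good. From this I would deduce $T_f\wedge p=0$: a nonzero meet would produce a condition $r\le T_f$ with $r\le p$, and the trunk $t_r$ of $r$ would be a node of $T_f$ that $r$ itself witnesses to be good, contradicting non-goodness of every node of $T_f$.

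To finish, note that $T_f$ belongs to the centered system generating $F(T)$, so $T_f\in F(T)$; combined with $T_f\le\neg p$ and upward closure this forces $\neg p\in F(T)$, contradicting $p\in F(T)$ since $F(T)$ is a proper filter. Hence $t$ is good, which is the desired conclusion (note the hypothesis that $I$ is an intersection of $F_\sigma$ ideals is not needed here). The main obstacle, and the reason the obvious approach fails, is that pruning $T$ to the successors merely \emph{compatible} with $p$ does not yield a tree below $p$, since a subcondition can still drift into $\neg p$; the goodness rank sidesteps this by assembling the tree from genuine witnesses $S_n\le p$ by amalgamation, so that the global inequality $S\le p$ is secured by predensity and a Boolean join rather than by any local check. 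I expect the only delicate bookkeeping to be the verification that $T_f$ is a bona fide condition and that the amalgamated families are predense below the trees they build.
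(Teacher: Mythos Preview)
Your proposal is correct and follows essentially the same approach as the paper: the paper defines $U$ as the set of what you call non-good nodes, proves the same dichotomy (if $s\in U$ then the set of successors outside $U$ lies in $I$, via the same amalgamation $S=\bigcup_i S_i=\sum_i S_i\le p$), builds the same tree $T_f$ (denoted $V$ there) consisting of non-good nodes, and reaches the contradiction by noting that $p\in F(T)$ forces compatibility with $T_f$, yielding a condition below $p$ whose trunk would have to be good. Your remark that the $F_\sigma$-intersection hypothesis is unused here is also in line with the paper, which invokes it only later.
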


\begin{proof}
Suppose this fails for some $p$. Let $U$ be the set of all nodes $s\in T$ such that $t\subseteq s$ and there is no tree $S\subset T$
with trunk $s$ such that $S\leq p$. Observe that $t\in U$, and if $s\in U$ then the set $\{i\in\gw:
s^\smallfrown i\in T$ and $s^\smallfrown i\notin U\}$ is in $I$. If this failed, then there would be a $I$-positive set $a\subset\gw$
such that for each $i\in a$, $s^\smallfrown i\in T$ and there is a tree $S_i\subset T\restriction s^\smallfrown i$ with trunk $s^\smallfrown i$ which is below $p$. Now, $S=\bigcup_{i\in a}S_i=\sum_{i\in a}S_i\leq p$, contradicting the assumption that $s\in U$.

Now, let $V=\{s\in T:$ if $\dom(t)\leq i\leq\dom(s)$ then $s\restriction i\in U\}$ and use the previous paragraph to see
that $V=T_f$ for some $f:\gwtree\to I$. Since $p\in F(T)$, $p$ is compatible with $V$ and there is some tree $W\subset V$ such that $W\leq p$. Let $s$ be the trunk of $W$ and obtain a contradiction with the fact that $s\in U$.
\end{proof}

The (ultra)filters on $RO(P)$ critical for Y-properness of $P$ will be obtained in the following way. If $T\in P$ is a tree with trunk $t$,
write $a=\{i\in\gw\colon t^\smallfrown i\in T\}\notin I$, use the assumption on the ideal $I$ to find an $F_\gs$-ideal $I(T)$ such that $I\subset I(T)$ and $a\notin I(T)$, and an ultrafilter $U(T)$ on $\gw$ such that $a\in U(T)$ and $I(T)\cap U(T)=0$. Finally, let
$G(T)=\{p\in RO(P)\colon\{i\in a:p\in F(T\restriction t^\smallfrown i)\}\in U(T)\}$. It is not difficult to see that $G(T)$ is an ultrafilter.

Now we are ready for the fusion argument. Let $M\prec H_\theta$ be a countable elementary submodel containing $U, G, F$.

\begin{claim}
\label{ceclaim}
If $T\in M\cap P$ is a tree with trunk $t$, then there is a tree $S\subset T$ with the same trunk such that

\begin{enumerate}
\item for all $i\in\gw$, if $t^\smallfrown i\in S$ then $S\restriction t^\smallfrown i\in M$;
\item for every $p\in G(T)\cap M$, for all but finitely many $i$, if $t^\smallfrown i\in S$ then $S\restriction t^\smallfrown i\leq p$.
\end{enumerate} 
\end{claim}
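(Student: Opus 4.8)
The plan is to take the immediate successors of the trunk of $S$ to be an $I$-positive set $a'\subseteq a=\{i\colon t^\smallfrown i\in T\}$ chosen to be almost contained in all the relevant slices of $G(T)$, and then to graft above each surviving node $t^\smallfrown i$ a subtree $S_i\in M$ extracted from Claim~\ref{directclaim}. Throughout, note that $G(T)$, the ideal $I(T)$, the ultrafilter $U(T)$, and the assignment $F$ all lie in $M$, since $M$ contains $U,G,F$ and $T$.

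First I would enumerate the countable set $G(T)\cap M$ as $\{p_n\colon n\in\gw\}$ (this enumeration need not be in $M$). By the definition of $G(T)$, for each $n$ the set $b_n=\{i\in a\colon p_n\in F(T\restriction t^\smallfrown i)\}$ belongs to $U(T)$; hence the decreasing sets $c_n=b_0\cap\dots\cap b_n$ are all in $U(T)$, and since $U(T)\cap I(T)=0$ they are all $I(T)$-positive.

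The heart of the argument is to collapse the decreasing sequence $\langle c_n\colon n\in\gw\rangle$ into a single $I(T)$-positive pseudo-intersection, and this is exactly the step that consumes the hypothesis that $I$ --- and therefore $I(T)$ --- is $F_\gs$. Using Mazur's representation of an $F_\gs$ ideal as $\{b\colon\gf(b)<\infty\}$ for a lower semicontinuous submeasure $\gf$, positivity of $c_n$ means that its finite subsets have unbounded $\gf$-mass, so I can pick finite $F_n\subseteq c_n$ with $\gf(F_n)\geq n$ and set $a'=\bigcup_n F_n$. Monotonicity gives $\gf(a')=\infty$, so $a'$ is $I(T)$-positive and, as $I\subseteq I(T)$, also $I$-positive; and since the $c_n$ decrease, $a'\setminus c_m\subseteq\bigcup_{n<m}F_n$ is finite, so $a'\subseteq^* c_m$ for every $m$. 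This is the main obstacle: for a merely analytic P-ideal, e.g.\ the density ideal, a decreasing sequence of positive sets can fail to have any positive pseudo-intersection, which is consistent with Theorem~\ref{secondtheorem} being an equivalence.

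Finally I would graft the subtrees. For $i\in a'$ let $m(i)$ be the number of $n$ with $i\in c_n$; as the $c_n$ decrease this is an initial segment $\{n<m(i)\}$, and $a'\subseteq^* c_n$ for all $n$ forces $m(i)\to\infty$ along $a'$. For $k<m(i)$ we have $i\in c_{m(i)-1}\subseteq b_k$, i.e.\ $p_k\in F(T\restriction t^\smallfrown i)$, so the meet $q_i=\prod_{k<m(i)}p_k$ lies in the filter $F(T\restriction t^\smallfrown i)$; Claim~\ref{directclaim} then provides a tree below $T\restriction t^\smallfrown i$ with trunk $t^\smallfrown i$ which is $\leq q_i$. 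Because $T\restriction t^\smallfrown i$, $q_i$, and all the $p_k$ belong to $M$, elementarity lets me take such a witness $S_i\in M$ (for the finitely many $i$ with $m(i)=0$ the meet is trivial and I simply take $S_i=T\restriction t^\smallfrown i$). Let $S$ consist of the stem $t$, the successors $\{t^\smallfrown i\colon i\in a'\}$, and the trees $S_i$ above them. Then $S$ is a condition in $P$ with trunk $t$: its branching at $t$ is the $I$-positive set $a'$, and above each $t^\smallfrown i$ it agrees with the condition $S_i\in P$. Condition (1) holds by construction, and for (2), given $p_n$ every $i\in a'$ with $m(i)>n$ --- all but finitely many --- satisfies $S_i\leq q_i\leq p_n$. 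The remaining verification that $S$ is a legitimate $P$-condition is routine.
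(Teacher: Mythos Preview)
Your approach is essentially identical to the paper's: both enumerate $G(T)\cap M$, use the lower semicontinuous submeasure for the $F_\gs$-ideal $I(T)$ to extract finite subsets of large mass from the decreasing $U(T)$-large sets $c_n$, and then invoke Claim~\ref{directclaim} together with elementarity of $M$ to graft the subtrees $S_i\in M$.

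There is one small slip to repair. Your quantity $m(i)=|\{n:i\in c_n\}|$ can be infinite: nothing prevents a particular $i\in a'$ from satisfying $p_k\in F(T\restriction t^\smallfrown i)$ for \emph{all} $k$. In that case $q_i=\prod_{k<m(i)}p_k$ is an infinite meet, which need not lie in the ultrafilter $F(T\restriction t^\smallfrown i)$ (and your expression $c_{m(i)-1}$ is undefined). The fix is immediate and does not change the idea: replace $m(i)$ by $n(i)=\min\{n:i\in F_n\}$, which is finite, still tends to infinity along $a'$ (since $\bigcup_{n<N}F_n$ is finite), and guarantees $i\in F_{n(i)}\subseteq c_{n(i)}$ so that the \emph{finite} meet $\prod_{k\le n(i)}p_k$ lies in $F(T\restriction t^\smallfrown i)$. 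This is exactly what the paper accomplishes by choosing its finite sets $a_j$ to be pairwise disjoint from the outset.
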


\begin{proof}
Let $\{p_i\colon i\in\gw\}$ be an enumeration of $G(T)\cap M$. Let $\mu$ be a lower semicontinuous submeasure on $\gw$ such that $I(T)=\{a\colon \mu(a)<\infty\}$. By induction on $j\in\gw$ find finite pairwise disjoint sets $a_j\subset\gw$ and trees $S_k\subset T$ for each $k\in a_j$
so that

\begin{itemize}
\item $\mu(a_j)\geq j$;
\item $S_k\in M$ is a tree with trunk $t^\smallfrown k$, below $\bigwedge_{i<j}p_i$.
\end{itemize}

\noindent This is easy to do using Claim~\ref{directclaim} and elementarity of the model $M$ repeatedly. In the end, let $S=\bigcup_jS_j$.
\end{proof}

Now, an obvious fusion argument using Claim~\ref{ceclaim} repeatedly gives the following. For every tree $T\in M\cap P$
there is a tree $S\subset T$ in $P$ with the same trunk such that for every node $s\in S$ there is an ultrafilter $F(s)\in M$
on $RO(P)$ such that for every $p\in F(s)\cap M$ for all but finitely many $i\in\gw$, either $s^\smallfrown i\notin S$ or $S\restriction s^\smallfrown i\leq p$. We will verify that the condition $S\leq T$ is Y-master for the model $M$.

Indeed, let $U\leq S$ be any tree, and let $s$ be its trunk. We claim that for every $p\in RO(P)\cap M$, if $p\geq U$ then $p\in F(s)$. Indeed, if this failed then $1-p\in F(s)$, by the properties of the tree $S$ one can erase finitely many immediate successors
of $s$ in the tree $U$ to get some $V\subset U$ such that $V\leq 1-p$, and then $V$ would be a common lower bound of
$p$ and $1-p$, a contradiction.
\end{proof}

\begin{proof}[Proof of Theorem~\ref{secondtheorem}]
It is enough to show that (3) implies (1).
Suppose that $I$ is an analytic P-ideal which is not an intersection of $F_\gs$-ideals; we must show that there is a condition
in $P=P(I)$ forcing an anticlique which is not covered by countably many ground model anticliques.
Let $a\subset\gw$ be a
set which belongs to every $F_\gs$-ideal containing $I$, yet $a\notin I$. To simplify the notation, assume that $a=\gw$; otherwise, work under the condition $a^{<\gw}\in P$.

Let $M\prec H_\theta$ be a countable elementary submodel containing $I$. Let $Y$ be the compact Polish space of all ultrafilters on $RO(P)\cap M$. Let $X=K(Y)$ and consider the open set $H\subset X^\gw$ consisting of all sequences
$\langle K_n:n\in\gw\rangle\in X^\gw$ such that $\bigcap_nK_n=0$. By compactness, the set $H$ is open. For every condition $T\in P$, let $K_T=\{F\in Y\colon \{p\in RO(P)\cap M\colon p\geq T\}\subset F\}$. This is a compact subset of $Y$, therefore an element of $X=K(Y)$. Let $\dot A=\{K_T\colon T$ is a tree in the $P(I)$-generic filter$\}$. Clearly, this is a $P$-name for an $H$-anticlique. We will show that $\dot A$ is forced not to be covered by countably many $H$-anticliques in the ground model.

Suppose that $\{B_i\colon i\in\gw\}$ is a countable collection of $H$-anticliques and $T\in P$ is a condition. We will find a condition $S\leq T$ such that $K_S\notin \bigcup_iB_i$; this will complete the proof. By compactness, for every $i\in\gw$ there is an ultrafilter $F_i$ on $RO(P)\cap M$ such that $F_i\in\bigcap B_i$. We will find the condition $S\leq T$ so that for every $i\in\gw$
there is $p_i\in F_i$ such that $1-p_i\geq S$. Then, for every $i\in\gw$ $F_i\notin K_S$ and therefore $K_S\notin B_i$
as required.

The construction of the condition $S$ starts with a small claim:

\begin{claim}
\label{littleclaim}
For every tree $U\in P$ with trunk $t$ and every $j\in\gw$ there is a tree $V\leq U$ with the same trunk such that

\begin{enumerate}
\item there is a set $c\subset\gw$ such that $V=\{s\in U\colon s$ is compatible with $t^\smallfrown n\}$ for every $n\in c$;
\item for every $i\in j$ there is a condition $q_i\in F_i$ such that $V\leq 1-q_i$;
\item for every $i\in\gw$ there is a finite set $u$ of immediate successors of $t$ in the tree $V$ and a condition $q_i\in F_i$
such that the tree $V$ with the nodes in $u$ erased is below $1-q_i$.
\end{enumerate}
\end{claim}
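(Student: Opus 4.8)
The plan is to read clauses (1)--(3) as statements about the \emph{first level of branching} of $U$ and then split the work into a ``pruning'' step, which consumes the hypothesis on $I$, and a routine diagonalization using that $I$ is a P-ideal. Write $a_0=\{n\in\gw: t^\smallfrown n\in U\}\notin I$ and $W_n=U\restriction t^\smallfrown n$ for $n\in a_0$. Clause (1) says exactly that $V$ is obtained from $U$ by retaining the full subtrees $W_n$ for $n$ in some set $c\subseteq a_0$ and deleting all other immediate successors of $t$; thus $V=\sum_{n\in c}W_n$ in $RO(P)$, and $V\leq U$ is a condition precisely when $c$ is $I$-positive. For $q\in RO(P)\cap M$ put $E(q)=\{n\in a_0: W_n\text{ is compatible with }q\}$. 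Since $\sum_{n\in b}W_n\leq 1-q$ iff $W_n\wedge q=0$ for every $n\in b$, clause (2) becomes: for each $i<j$ there is $q_i\in F_i$ with $c\cap E(q_i)=\emptyset$; and clause (3) becomes: for each $i\in\gw$ there is $q_i\in F_i$ with $c\cap E(q_i)$ finite (and then one takes $u=c\cap E(q_i)$).

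The heart of the matter is the following \emph{pruning lemma}: for every ultrafilter $F$ on $RO(P)\cap M$ there is $q\in F$ with $E(q)\in I$. Note first that $RO(P)\cap M$ is countable, so $F$ is countable, and since $E(q\wedge q')\subseteq E(q)\cap E(q')$ the family $\{E(q):q\in F\}$ is downward directed; fixing a decreasing cofinal chain $q^{(0)}\geq q^{(1)}\geq\cdots$ in $F$, the lemma asserts that some $E_m:=E(q^{(m)})\in I$. I would prove it by contradiction: assuming every $E_m$ is $I$-positive, I would use a lower semicontinuous submeasure $\phi$ with $I=\mathrm{Exh}(\phi)$ (Solecki) together with the descending compatibility traces $E_m$ to build a lower semicontinuous submeasure $\nu$ whose ideal $\mathrm{Fin}(\nu)=\{b:\nu(b)<\infty\}$ contains $I$ but not $a_0$; as $\mathrm{Fin}(\nu)$ is $F_\gs$, this contradicts the hypothesis that $I$ lies in no nontrivial $F_\gs$-ideal. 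The leverage is that compatibility of a tree $W_n$ with a condition $q\in M$, and hence the rate at which $E_m$ shrinks, is governed by $\phi$ through the $I$-positive branching demanded of conditions in $P(I)$, and it is this that upgrades the merely $F_{\gs\gd}$ bound coming from $\mathrm{Exh}(\phi)$ to an honest $F_\gs$ bound.

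I expect this pruning lemma to be the main obstacle. The $I$-positive ``unprunable'' filters whose nonexistence the lemma asserts are exactly what \emph{does} occur for $F_\gs$ ideals --- for instance the $G(T)$-type ultrafilters attached to $I=\mathrm{Fin}$ in Theorem~\ref{firsttheorem}, where $P(I)$ is Y-proper and anticliques \emph{are} covered --- so the construction of $\nu$ must genuinely spend the hypothesis, and the delicate point is verifying lower semicontinuity of $\nu$, precisely the feature that separates $\mathrm{Fin}$ from $\mathrm{Exh}$.

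Granting the pruning lemma, the assembly is routine. Apply it to each $F_i$ to get $q_i\in F_i$ with $E_i:=E(q_i)\in I$. Using that $I$ is a P-ideal, choose $e\in I$ with $E_i\subseteq^* e$ for all $i$, and set $c=a_0\setminus\big(e\cup\bigcup_{i<j}E_i\big)$. Since $e\cup\bigcup_{i<j}E_i\in I$ while $a_0\notin I$, the set $c$ is $I$-positive, so $V=\sum_{n\in c}W_n\leq U$ is a condition with trunk $t$ and first branching $c$, giving (1). For $i<j$ we have $c\cap E_i=\emptyset$, whence $V\leq 1-q_i$, giving (2); and for every $i$ we have $c\cap E_i\subseteq E_i\setminus e$, which is finite, so erasing $u=c\cap E_i$ puts the remainder of $V$ below $1-q_i$, giving (3).
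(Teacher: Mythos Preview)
Your reduction of the three clauses to statements about the first-level traces $E(q)=\{n\in a_0:W_n\wedge q\neq 0\}$ is correct, and the final assembly from the pruning lemma via the P-ideal property would indeed work if that lemma held. But the pruning lemma is \emph{false}. Take $I=\mathcal{Z}_0$ (density zero), $U=\gw^{<\gw}$, $t=\langle\rangle$, so that $a_0=\gw$ and each $W_n\in M$. For $b\in\mathcal P(\gw)\cap M$ write $q_b=\sum_{n\in b}W_n\in RO(P)\cap M$; then $b\mapsto q_b$ is a Boolean embedding, and for every $q\in RO(P)\cap M$ one has $E(q)\in M$ and $q\leq q_{E(q)}$. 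Since $\gw\notin I$, the dual filter $I^*\cap M$ is proper and extends to an ultrafilter $\mathcal U$ on $\mathcal P(\gw)\cap M$ disjoint from $I$. Extend $\{q_b:b\in\mathcal U\}$ to an ultrafilter $F$ on $RO(P)\cap M$. For every $q\in F$ we get $q_{E(q)}\in F$, hence $E(q)\in\mathcal U$, hence $E(q)\notin I$. Since the ultrafilters $F_i$ in the ambient argument arise from \emph{arbitrary} anticliques $B_i$ and can therefore be any ultrafilter on $RO(P)\cap M$, this counterexample is fatal to your approach; the vague construction of the submeasure $\nu$ cannot succeed.

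The paper does not attempt to push the traces into $I$. It fixes a Solecki submeasure $\mu$ with $I=\mathrm{Exh}(\mu)$ and spends the hypothesis on the following partition fact: for every $k$, $\gw$ decomposes into finitely many singletons and finitely many pieces of $\mu$-mass $<2^{-k}$ (else those pieces would generate a proper $F_\gs$-ideal containing $I$). Choosing $k_i$ with $\sum_i 2^{-k_i}<\eps/2$ where $\eps=\limsup_n\mu(a\setminus n)$, each $F_i$ must pick one piece $b_i\in M$ of such a partition, yielding $q_i\in F_i$ with $E(q_i)\subseteq b_i$ and either $|b_i|=1$ or $\mu(b_i)<2^{-k_i}$. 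Then $c$ is obtained from $a$ by deleting the $b_i$ for $i<j$ and (essentially) the non-singleton $b_i$ for $i\geq j$; the summable $\mu$-masses guarantee $\limsup_n\mu(c\setminus n)>0$, and clauses (2) and (3) follow just as in your assembly. The correct smallness notion is thus ``$\mu$-mass $<2^{-k_i}$'' --- an $F_\gs$ condition, exactly matching the hypothesis --- not membership in $I$; and your P-ideal diagonalization is replaced by a quantitative summability bound in the submeasure.
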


\begin{proof}
Finally, we will use the assumptions on the ideal $I$. By a result of Solecki~\cite{solecki:ideals}, since $I$ is an analytic P-ideal it is possible
to find a lower semicontinuous submeasure $\mu$ on $\gw$ such that $I=\{b\subset\gw:\limsup_n\mu(b\setminus n)=0\}$.
Observe that for every $k\in\gw$ there is a partition of $\gw$ into finitely many singletons and finitely many pieces of $\mu$-mass $<2^{-k}$. If this failed, then the singletons together with sets of $\mu$-mass $<2^{-k}$ generate an $F_\gs$-ideal which contains $I$ as a subset and does not contain $\gw$ as an element, contradicting our assumptions on $I$. By the elementarity of the model $M$,
such partitions exist in the model $M$ as well.

Let $a=\{n\in\gw\colon t^\smallfrown n\in U\}$. Let $\eps=\limsup_n\mu(a\setminus n)>0$. Let $\langle k_i\colon i\in\gw\rangle$ be a sequence of numbers such that $\sum_i{2^{-k_i}}<\eps/2$. The previous paragraph shows that there are sets $b_i\subset\gw$
in the model $M$ such that each $b_i$ is either a singleton or a set of $\mu$-mass $<2^{-k_i}$ such that 
either the Boolean value $q_i=\|$the generic element of $\baire$ does not start with $t\|$ is in $F_i$, or the Boolean value
$q_i=\|$the generic element of $\baire$ starts with $t^\smallfrown n$
for some $n\in b_i\|$ is in the ultrafilter $F_i$.
It is now easy to find a set $c\subset a$ such that $\limsup_n\mu(c\setminus n)>\eps/2$
such that for all $i\in j$, $b_i\cap c=0$, and for every $i\in\gw$ $b_i\cap c$ is finite. The tree $V=\{s\in U:s$ is compatible with some $t^\smallfrown n$ for some $n\in c\}$ clearly works as desired.
\end{proof}

Assume for simplicity that the trunk of the tree $T$ is empty. A standard fusion argument using Claim~\ref{littleclaim} repeatedly yields a tree $S\leq T$ with empty trunk such that for every $i\in\gw$, there is a nonempty finite tree $u_i\subset S$ such that for every node $t\in u_i$
there is an element $q_i^t\in F_i$ such that the tree $S_t$ obtained from $S$ by restricting to $t$ and erasing all immediate successors of $t$ which are in $u_i$, is stronger than $1-q_i^t$. Let $p_i=\prod_{t\in u_i}q_i^t$ and observe that $S, p_i$ work as desired.
\end{proof}

\begin{question}
Does the conjunction of Y-properness and c.c.c.\ imply Y-c.c.?
\end{question}

\begin{question}
Suppose that $I$ is a suitably definable $\gs$-ideal on a Polish space $X$. Suppose that the quotient poset $P_I$ of Borel $I$-positive sets ordered by inclusion is proper. Are the following equivalent?

\begin{enumerate}
\item $P_I$ is Y-proper;
\item for every Polish compact space $Y$ and every open set $H\subset Y^\gw$, every $H$-anticlique in the $P_I$ extension is covered by countably many $H$-anticliques in the ground mode.
\end{enumerate}
\end{question}

\section{General treatment}
\label{generalsection}

Y-c.c.\ and Y-properness are preserved under a suitable notion of iteration, and there are suitable forcing axioms associated with them. The treatment is complicated enough to warrant a more general approach of which Y-c.c.\ and Y-properness are the most important instances.

\begin{definition}
\label{regularitydefinition}
A property $\Phi(F, B)$ of subsets $F$ of complete Boolean algebras $B$ is a \emph{regularity property} if the following is provable in ZFC:

\begin{enumerate}
\item (nontriviality) $\Phi(\{1\}, B)$ for every complete Boolean algebra $B$;
\item (closure up) $\Phi(F, B)\to\Phi(F', B)$ whenever $F'=\{p\in B:\exists q\in F\ q\leq p\}$;
\item (restriction) whenever $p\in B$ then $\Phi(F, B)$ implies $\Phi(F\cap (B\restriction p), B\restriction p)$, and $\Phi(F, B\restriction p)$ implies $\Phi(F, B)$. Here, $B\restriction p$ is the Boolean algebra $\{q\in B:q\leq p\}$ with the usual operations;
\item (complete subalgebras) if $B_0$ is a complete subalgebra of  $B_1$: for every $F\subset B_1$ $\Phi(F, B_1)\to \Phi(F\cap B_0, B_0)$ holds, and for every $F\subset B_0$ $\Phi(F, B_0)\to\Phi(F, B_1)$ holds;
\item (iteration) if $\dot B_1$ is a $B_0$-name for a complete Boolean algebra, $F_0\subset B_0$, $\dot F_1$ a name for a subset of $B_1$,
$\Phi(F_0, B_0)$ and $1\Vdash\Phi(\dot F_1, \dot B_1)$, then $\Phi(F_0*\dot F_1, B_0*\dot B_1)$ where 

$$F_0*\dot F_1=\{\langle p_0, \dot p_1\rangle\in B_0*\dot B_1:
p_0\land \|\dot p_1\in\dot F_1\|\in F_0\}$$.
\end{enumerate}

\noindent If the Boolean algebra $B$ is clear from the context, we write $\Phi(F)$ for $\Phi(F, B)$.
\end{definition}

In the last item, we use the Boolean presentation of the two-step iteration. Let $B_0$ be a complete Boolean algebra and $\dot B_1$ a $B_0$-name for a complete Boolean algebra. Consider the poset of all pairs $\langle p_0, \dot p_1\rangle$ such that
$p_0\in B_0$, $\dot p_1$ is a $B_0$-name for an element of $\dot B_1$, $p_0\neq 0$ and $p_0\Vdash\dot p_1\neq 0$. The ordering is defined
by $\langle q_0, q_1\rangle\leq \langle p_0, \dot p_1\rangle$ if $q_0\leq p_0$ and $q_0\Vdash\dot q_1\leq\dot p_1$. It is not difficult to check that the separative quotient of this partial ordering (together with a zero element) is complete (admits arbitrary suprema and infima) and therefore forms a complete Boolean algebra which we will denote by $B_0*\dot B_1$.

The central example of a regularity property studied in this paper is $\Phi(F)=$``$F$ is a centered set.'' Other possibilities include $\Phi(F)=$``any two elements of $F$ are compatible'' or $\Phi(F)=$``for every collection $\{p_n:n\in\gw\}\subset F$ the Boolean value $\liminf_n p_n$ is nonzero.'' There are many other sensible possibilities.
Items (1--3) of the definition imply that the strongest conceivable regularity property is $\Phi(F)=$``$F$ has a lower bound.''

The class of regularity properties is closed under countable conjunctions. The disjunctions are more slippery but also more rewarding.
To treat them, we introduce an additional notion.

\begin{definition}
Let $G$ be a set with a binary operation $*$.
A property $\Phi(g, F, B)$ of subsets $F$ of complete Boolean algebras $B$ and elements $g\in G$ is a $G$-\emph{regularity property}
if for each $g\in G$, $\Phi(g, \cdot, \cdot)$ satisfies the demands (1--4) of Definition~\ref{regularitydefinition}
and (5) is replaced with

\begin{enumerate}
\item[(5)] if $\dot B_1$ is a $B_0$-name for a complete Boolean algebra, $F_0\subset B_0$, $\dot F_1$ a name for a subset of $B_1$,
$\Phi(g_0, F_0, B_0)$ and $1\Vdash\Phi(\check g_1, \dot F_1, \dot B_1)$, then $\Phi(g_0*g_1, F_0*\dot F_1, B_0*\dot B_1)$.
\end{enumerate}
\end{definition}

\noindent A typical case appears when $G$ is a countable semigroup. If $G$ is clear from context, we omit it from the notation. It is clear that every regularity property is a $G$-regularity property for $G=\{1\}$ with the multiplication operation. Good nontrivial examples include $G=$the rationals in the interval $(0,1]$ with multiplication, and $\Phi(\eps, F, B)=$``there is a finitely additive probability measure $\mu$ on $B$ such that $\mu(p)\geq\eps$ for all $p\in F$." Another example studied by Steprans \cite[Definition~3]{steprans:strong-Q} is obtained when $G\subset\baire$ is a set closed under composition, with the composition operation, and $\Phi(g, F, B)=$``for every $n\in\gw$ and every collection of $g(n)$ many elements of $F$, there are $n$ many elements in the collection with a common lower bound."

\begin{definition}
Suppose that $\langle G, *\rangle$ is a set with a binary operation.
Suppose that $\Phi$ is a $G$-regularity property of subsets of complete Boolean algebras. 

\begin{enumerate}
\item  A poset $P$ is $\Phi$\emph{-c.c.} if for every condition $q\in P$ and every countable elementary submodel $M\prec H_\theta$ containing $P, G$
there is an element $g\in G\cap M$ and a set $F\in M$ such that $\Phi(g, F)$ holds and $F$ contains all elements of $RO(P)\cap M$ weaker than $q$.
\item $P$ is $\Phi$\emph{-proper} if for every countable elementary submodel $M\prec H_\theta$ containing $P, G$ and every condition $p\in P\cap M$
there is a $\Phi$-\emph{master condition} $q\leq p$: this is a condition which is master for $M$ and for every $r\leq q$, there is an element $g\in G\cap M$ and a set $F\in M$ such that $\Phi(g, F)$ holds and $F$ contains all elements of $RO(P)\cap M$ weaker than $q$.
\end{enumerate}
\end{definition}

Clearly, Y-c.c.\ and Y-properness are special cases of $\Phi$-c.c.\ and $\Phi$-properness where $\Phi(F)=$``$F$ is a centered set.''
Certain natural posets may satisfy other variations of $\Phi$-c.c.\ For example, the random poset satisfies $\Phi$-c.c. for $\Phi(F)=$``any two elements of $F$ are compatible'' or $\Phi(F)=$``for every collection $\{p_n:n\in\gw\}\subset F$ the Boolean value $\liminf_n p_n$ is nonzero.''
The notion of strong properness (as defined in~\cite{mitchell:addclub}) corresponds to $\Phi$-properness for $\Phi(F)=$``$F$ has a lower bound.'' Every strongly proper poset is thus $\Phi$-proper for every choice of the regularity property~$\Phi$.

There are many attractive arguments drawing abstract consequences from $\Phi$-c.c.\ and $\Phi$-properness for various regularity properties $\Phi$. We will limit ourselves to several striking consequences of this kind.

\begin{theorem}
Suppose that $\Phi$ is a regularity property such that $\Phi(F)$ implies that $F$ contains no uncountable antichain. Then $\Phi$-c.c.\ implies c.c.c.\ 
\end{theorem}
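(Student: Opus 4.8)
The plan is to imitate the proof of the second implication of Theorem~\ref{ccctheorem} as closely as possible, reducing the statement to the nonexistence of a nonprincipal countably complete ultrafilter on a set of size $\aleph_1$. So I would assume toward a contradiction that $P$ is $\Phi$-c.c.\ but fails c.c.c., fix an antichain $A\subset P$ with $|A|=\aleph_1$, take a countable $M\prec H_\theta$ containing $P,A$, pick $q\in A\setminus M$, and let $F\in M$ be the set furnished by $\Phi$-c.c.\ (with $\Phi(F)$ and $F\supseteq\{p\in RO(P)\cap M:p\geq q\}$). As in Theorem~\ref{ccctheorem} I would then form $G=\{B\subseteq A:\sum B\in F\}\in M$, which is the candidate ultrafilter. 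The forward direction of the crucial biconditional survives unchanged: if $B\in M$ and $q\in B$, then $\sum B\geq q$ and $\sum B\in M$, so $\sum B\in F$ and $B\in G$. Granting the full biconditional $B\in G\liff q\in B$ (for $B\in M$), elementarity shows $G$ is a nonprincipal countably complete ultrafilter on $A$, which is the desired ZFC contradiction.

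The only point where the proof of Theorem~\ref{ccctheorem} genuinely used that $F$ is a \emph{filter} is the reverse direction: if $q\notin B$ then $A\setminus B\in G$, so $\sum B\in G$ would force the two disjoint sums $\sum B$ and $\sum(A\setminus B)$ both into $F$, which a filter forbids. Under the present hypothesis $F$ need only have no uncountable antichain, and two disjoint elements do not immediately contradict that. Hence the heart of the argument is a preliminary lemma: \emph{if $\Phi$ is a regularity property for which $\Phi(F)$ implies that $F$ has no uncountable antichain, then $\Phi(F)$ already implies that $F$ is linked}, i.e.\ any two elements of $F$ are compatible. Once this is available, the reverse direction goes through verbatim ($\sum B$ and $\sum(A\setminus B)$ are incompatible, so they cannot both lie in the linked set $F$), and the ultrafilter contradiction is reached exactly as before.

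The main obstacle is thus proving the linkedness lemma, and this is where I would exploit the iteration clause (5) of Definition~\ref{regularitydefinition}, which is precisely the structural feature distinguishing regularity properties from arbitrary ones. Suppose $\Phi(F_0,B_0)$ holds with incompatible $a,b\in F_0$. Applying item (5) to a name $\dot F_1$ for a copy of $F_0$ in a copy $\dot B_1$ of $B_0$ produces $\Phi(F_0*\dot F_1,\,B_0*\dot B_1)$, and a direct computation shows $F_0*\dot F_1$ contains the four pairwise incompatible conditions $\langle a,\dot a\rangle,\langle a,\dot b\rangle,\langle b,\dot a\rangle,\langle b,\dot b\rangle$; iterating doubles the size of the antichain at each step. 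The delicate part—what I expect to be the real work—is organizing these two-step applications into a transfinite iteration of length $\gw_1$ for which item (5) (together with the iteration theorems of Section~\ref{generalsection}) still yields $\Phi$ of the limit set while the accumulated antichain becomes genuinely \emph{uncountable}; the naive finite-support bookkeeping only manufactures finite antichains, so the support structure of the iteration must be chosen so that an $\aleph_1$-antichain survives into the limit. Any such configuration contradicts $\Phi(F)\to$ ``no uncountable antichain,'' establishing that a two-element antichain in $F$ is already impossible, i.e.\ that $F$ is linked. With the lemma in hand, the remainder of the proof is identical to that of Theorem~\ref{ccctheorem}.
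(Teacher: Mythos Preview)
Your reduction hinges entirely on the ``linkedness lemma'': that for a regularity property $\Phi$, the implication $\Phi(F)\Rightarrow\text{no uncountable antichain}$ forces $\Phi(F)\Rightarrow F\text{ is linked}$. You do not prove this, and the sketch you offer cannot be completed. Clause~(5) of Definition~\ref{regularitydefinition} is a two-step statement; by induction it yields, from a two-element antichain in some $F_0$, sets $F_n$ with $\Phi(F_n)$ containing antichains of size $2^n$ for every \emph{finite} $n$. Nothing in the definition supplies a limit clause, and Theorem~\ref{fsitheorem} (which you invoke) concerns preservation of $\Phi$-c.c.\ for \emph{posets} under finite-support iteration, not preservation of $\Phi(F)$ for a prescribed set $F$ through a transfinite iteration. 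There is no mechanism here that produces a single $F$ with $\Phi(F)$ and an uncountable antichain inside it; indeed the measure-parametrized example following Definition~\ref{regularitydefinition} (finitely additive probability measures with $\mu(p)\geq\eps$) shows that natural regularity-type properties happily accommodate finite antichains of any size while forbidding uncountable ones, so the lemma is almost certainly false and certainly unproved.

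The paper avoids this obstacle by \emph{not} trying to upgrade ``no uncountable antichain'' to ``linked''. With $M$, $q$, and $F\in M$ as you set them up, it forms the $\sigma$-ideal $I$ on $A$ $\sigma$-generated by $\{B\subseteq A:\sum B\notin F\}$. A family of pairwise disjoint $I$-positive subsets of $A$ gives pairwise incompatible elements $\sum B\in F$ (because $A$ is an antichain), so the hypothesis ``$\Phi(F)\Rightarrow$ no uncountable antichain'' says exactly that $I$ is c.c.c.; nontriviality of $I$ follows from elementarity and the choice of $q$. Ulam's theorem (no nontrivial c.c.c.\ $\sigma$-ideal on $\aleph_1$) then gives the contradiction. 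This is the correct weakening of the Theorem~\ref{ccctheorem} argument: where ``filter'' there gave a $\sigma$-complete ultrafilter, ``no uncountable antichain'' here gives a c.c.c.\ $\sigma$-ideal, and the corresponding classical obstruction is Ulam's matrix rather than the nonexistence of a measurable cardinal below $\aleph_1$.
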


\begin{proof}
For contradiction, assume that $P$ is a $\Phi$-c.c.\ poset with an antichain $A$ of size $\aleph_1$. Let $M\prec H_\theta$ be a countable elementary submodel containing $P, A$, and let $q\in A\setminus M$ be any element. Let $F\in M$
be a subset of $RO(P)$ such that $\Phi(F)$ holds and $F$ contains all elements of $RO(P)\cap M$ weaker than $r$.
Let $I$ be the $\gs$-ideal on $A$ $\gs$-generated by the sets $B\subset A$ such that $\sum B\notin F$.

\begin{claim}
$I$ is a nontrivial c.c.c.\ $\gs$-ideal containing all singletons.
\end{claim}

\begin{proof}
For the nontriviality, use the elementarity of the model $M$. If $B_n\subset A$ for $n\in\gw$ are generating elements of the $\gs$-ideal $I$ in the model $M$, then $q\notin B_n$ for each $n$ by the definitions, and so $q\notin \bigcup_nB_n$ and $\bigcup_nB_n\neq A$. Thus, no countable union of generating sets in the model $M$ of the $\gs$-ideal $I$ covers all of $A$,
and by the elementarity of the model $M$ this is true even for generating sets in $V$. 

If the $\gs$-ideal $I$ failed to be c.c.c.\ then there would be an uncountable collection $C$ of pairwise disjoint $I$-positive sets.
As the sets in $C$ are pairwise disjoint and $A$ is an antichain, the Boolean sums $\sum B$ for $B\in C$ are pairwise incompatible. They all must be elements of $F$ by the definition of $I$. However, this contradicts the assumption on the regularity property $\Phi$. 
\end{proof}

However, by a classical theorem of Ulam \cite{ulam:matrix}, in ZFC there are no nontrivial c.c.c.\ $\gs$-ideals on sets of size $\aleph_1$ which contain no singletons. This is a contradiction.
\end{proof}

\begin{theorem}
\label{litheorem}
Let $\Phi$ be a regularity property such that $\Phi(F)$ implies that $F$ contains no infinite antichain. For every $\Phi$-proper
poset $P$, if $H\subset [X]^2$ is an open graph on a second countable space $X$, then every $H$-anticlique in the $P$-extension is covered by countably many anticliques in the ground model.
\end{theorem}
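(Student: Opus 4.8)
The plan is to transcribe the proof of Theorem~\ref{cliquetheorem} into the $\Phi$-proper setting, replacing the use of ``$F$ is a filter'' by the weaker regularity demand and paying for the replacement with one appeal to the open graph dichotomy. First I would fix the open graph $H\subseteq [X]^2$ and a name $\dot A$ forced to be an $H$-anticlique, and for a subset $F\subseteq RO(P)$ define, exactly as before, $B(\dot A,F)=\{x\in X:\|\check O\cap\dot A\neq 0\|\in F$ for every ground model basic open $O\ni x\}$. The crucial structural observation is the following \emph{incompatibility engine}: if $O_0,O_1$ are disjoint basic open sets with $O_0\times O_1\subseteq H$, then the Boolean values $\|\check O_0\cap\dot A\neq 0\|$ and $\|\check O_1\cap\dot A\neq 0\|$ are incompatible in $RO(P)$, since a common lower bound would force two distinct points of $\dot A$, one in $O_0$ and one in $O_1$, to form an $H$-edge, contradicting that $\dot A$ is an anticlique. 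I would also record that $B(\dot A,F)=X\setminus\bigcup\{O:\|\check O\cap\dot A\neq 0\|\notin F\}$ is relatively closed in $X$ and is computed correctly inside any elementary submodel containing $\dot A$, $F$, and the basis.

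The heart of the matter is the claim that $B(\dot A,F)$ is covered by \emph{countably many} $H$-anticliques. I would first show it carries no infinite $H$-clique: if $\{x_n:n\in\gw\}\subseteq B(\dot A,F)$ were such a clique, then using openness of $H$, second countability, and disjointification I can choose pairwise disjoint basic open sets $O_n\ni x_n$ with $O_m\times O_n\subseteq H$ for $m\neq n$; by the engine the values $\|\check O_n\cap\dot A\neq 0\|$ are pairwise incompatible, while each lies in $F$ because $x_n\in B(\dot A,F)$. This is an infinite antichain inside $F$, impossible since $\Phi(F)$ forbids one by hypothesis. Having excluded infinite (in particular perfect) cliques, I would invoke the ZFC open graph dichotomy --- perfect clique versus countable cover by anticliques --- applied to the relatively closed, hence definable, set $B(\dot A,F)$, to present it as a countable union of $H$-anticliques.

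For the $\Phi$-properness step I would take a countable $M\prec H_\theta$ containing $P,X,H,\dot A$ and the basis, fix $p\in P\cap M$, and let $q\leq p$ be a $\Phi$-master condition. The claim is that $q$ forces $\dot A$ into the union of the countably many $H$-anticliques lying in $M$. If this failed, there would be $r\leq q$ and a ground model point $x$ with $r\Vdash\check x\in\dot A$ that lies in no $H$-anticlique of $M$. Applying $\Phi$-properness at $r$ yields $F\in M$ with $\Phi(F)$ and $\{s\in RO(P)\cap M:s\geq r\}\subseteq F$. Now every basic open $O\ni x$ satisfies $\|\check O\cap\dot A\neq 0\|\geq r$ and lies in $M$, hence belongs to $F$, so $x\in B(\dot A,F)$. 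Since $\dot A$, $F$, and the basis all lie in $M$, so does $B(\dot A,F)$ together with its countable anticlique decomposition; therefore $x$ lands in an $H$-anticlique of $M$, the desired contradiction.

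The hard part will be the passage from ``no infinite clique'' to ``countable union of anticliques.'' In the Y-c.c.\ case of Theorem~\ref{cliquetheorem} the stronger hypothesis that $F$ is a filter makes even a single edge produce a two-element antichain, so $B(\dot A,F)$ is outright an anticlique and no dichotomy is needed; under the present weaker hypothesis one only rules out \emph{infinite} cliques and genuinely must decompose. This is exactly where the closedness of $B(\dot A,F)$ and the topological assumptions on $X$ must carry the load, and where one must check that the decomposition is canonical enough to be reflected into $M$. I expect verifying the applicability of the dichotomy to the non-Polish second countable setting, and the absorption of the decomposition into $M$, to be the only nonroutine points.
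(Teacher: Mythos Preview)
Your overall architecture is correct and your final $\Phi$-master-condition paragraph is verbatim the paper's. The divergence is entirely in the claim that $B(\dot A,F)$ is a countable union of anticliques, and there your two-step route (no infinite clique, then a dichotomy) is workable but both steps are shakier than you indicate.

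For the first step, the sentence ``using openness of $H$, second countability, and disjointification I can choose pairwise disjoint basic open sets $O_n\ni x_n$ with $O_m\times O_n\subseteq H$ for $m\neq n$'' is not true as stated: if the clique accumulates on one of its own points (say $x_0=0$, $x_n=1/n$ in $[0,1]$ with $H$ the complete graph), every open $O_0\ni x_0$ meets cofinitely many $x_n$. What is true is that an inductive pigeonhole/thinning argument produces such $O_n$ along an infinite \emph{subsequence}, which suffices; but that construction is essentially the nested $O_n\times U_n\subset H$, $O_{n+1},U_{n+1}\subset U_n$ scheme the paper uses anyway.

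For the second step, the dichotomy you cite (perfect clique versus countable anticlique cover) is Feng's theorem for analytic subsets of Polish spaces, and you are right to worry about the bare second-countable hypothesis. The statement you actually need --- an open graph on a second countable space with no infinite clique admits a countable anticlique cover --- is a ZFC fact, but its proof is precisely the Cantor--Bendixson argument: discard basic opens on which the set is already a countable union of anticliques and show the remainder is empty. That is exactly what the paper does, only it does it \emph{directly on} $B(\dot A,F)$: if the remainder $B$ is nonempty then every nonempty relatively open piece of $B$ contains an $H$-edge, so one builds $O_n,U_n$ with $O_n\times U_n\subset H$, $O_{n+1},U_{n+1}\subset U_n$, $B\cap O_n\neq\emptyset$; then $p_n=\|\check O_n\cap\dot A\neq 0\|\in F$ (since $B\subset B(\dot A,F)$) and the $p_n$ are pairwise incompatible by your own ``engine'', giving the forbidden infinite antichain in one pass. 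So the paper's argument is your two steps fused into one, avoiding both the thinning issue and any appeal to an external dichotomy.
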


\noindent Note that the statement ``$F$ contains no infinite antichains'' in itself is not a regularity property as it does not satisfy the iteration clause of regularity.

\begin{proof}
Suppose that $P$ is a $\Phi$-proper poset and $H\subset [X]^2$ is an open graph on a second countable space. Let $\dot A$ be a $P$-name for an anticlique and let $F\subset RO(P)$ be a set satisfying $\Phi$.

\begin{claim}
The set $B(\dot A, F)=\{x\in X:$ for every open neighborhood $O\subset X$ of $x$, the Boolean value $\|\check O\cap\dot A\neq 0\|$ is in $F\}$
is a union of countably many $H$-anticliques.
\end{claim}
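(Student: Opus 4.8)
The plan is to mimic the single-anticlique argument behind Theorem~\ref{cliquetheorem}, paying for the weaker hypothesis on $F$ (no \emph{infinite} antichain, rather than the filter property) by splitting $B(\dot A,F)$ into countably many pieces indexed by a fixed countable basis. Fix a countable basis $\{O_n:n\in\gw\}$ of open sets for $X$, and for each $n$ put $b_n=\|\check O_n\cap\dot A\neq 0\|\in RO(P)$. The definition of $B(\dot A,F)$ says precisely that whenever $x\in B(\dot A,F)$ and $x\in O_n$, then $b_n\in F$. The basic local fact, proved exactly as in the claim inside Theorem~\ref{cliquetheorem}, is this: if $\{x,y\}\in H$, then by openness of $H$ there are disjoint basic open sets $U\ni x$ and $V\ni y$ with $\{u,v\}\in H$ for all $u\in U,\ v\in V$, and then $\|\check U\cap\dot A\neq 0\|\wedge\|\check V\cap\dot A\neq 0\|=0$, since any condition below both would force $\dot A$ to meet $U$ and to meet the disjoint set $V$, producing an $H$-edge inside $\dot A$ and contradicting that $\dot A$ is an anticlique.

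Next I would reduce the claim to a purely local statement. Call a point $x\in B(\dot A,F)$ \emph{locally free} if some basic $O_n\ni x$ contains no $H$-neighbour of $x$ inside $B(\dot A,F)$, i.e.\ no $y\in O_n\cap B(\dot A,F)$ with $\{x,y\}\in H$. If \emph{every} point of $B(\dot A,F)$ is locally free, the claim follows at once: color each $x$ by the least $n$ such that $O_n$ witnesses local freedom, and let $A_n$ be the $n$-th color class. Then $A_n\subset O_n\cap B(\dot A,F)$, and $A_n$ is an $H$-anticlique, because if $x,x'\in A_n$ then $x'\in O_n\cap B(\dot A,F)$ while $O_n$ contains no $H$-neighbour of $x$, so $\{x,x'\}\notin H$. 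Thus $B(\dot A,F)=\bigcup_n A_n$ is a union of countably many anticliques, and it suffices to show that every point is locally free.

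Finally, suppose toward a contradiction that some $x\in B(\dot A,F)$ is not locally free; then every basic neighbourhood of $x$ contains a point of $B(\dot A,F)$ which is an $H$-neighbour of $x$. From this I would build, by recursion on $k\in\gw$ (starting with $U_{-1}=X$), basic open sets $U_k\ni x$ and open sets $V_k$ with points $y_k\in V_k\cap B(\dot A,F)$ so that $U_k\subset U_{k-1}$, $V_k\subset U_{k-1}$, $U_k\cap V_k=\emptyset$, and every cross pair in $U_k\times V_k$ lies in $H$. At stage $k$ one uses the failure of local freedom inside $U_{k-1}$ to find an edge $\{x,y_k\}$ with $y_k\in U_{k-1}\cap B(\dot A,F)$, applies the local fact to get an $H$-rectangle around that edge, and intersects both of its sides with $U_{k-1}$ to secure the nesting. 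Writing $c_k=\|\check V_k\cap\dot A\neq 0\|$ and $d_k=\|\check U_k\cap\dot A\neq 0\|$, one has $c_k\in F$ (since $y_k\in B(\dot A,F)$ and $V_k\ni y_k$), a decreasing chain $d_0\geq d_1\geq\cdots$ by nesting, $c_k\leq d_{k-1}$ since $V_k\subset U_{k-1}$, and $c_k\wedge d_k=0$ by the local fact. Hence for $j<k$ we get $c_k\leq d_{k-1}\leq\cdots\leq d_j$ and therefore $c_k\wedge c_j\leq d_j\wedge c_j=0$, so $\{c_k:k\in\gw\}$ is an infinite antichain contained in $F$. This contradicts the hypothesis that $\Phi(F)$ forbids infinite antichains, and completes the proof.

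The step I expect to be the main obstacle is exactly this last recursion. Producing the incompatibilities $c_k\wedge d_k=0$ one edge at a time is immediate from the local fact, but each such relation only ties together the two halves of a single rectangle. The genuine work is arranging the nesting $V_k\subset U_{k-1}$ against the decreasing chain of the $d_k$, which is what upgrades these isolated incompatibilities into \emph{pairwise} incompatibility and so yields a single infinite antichain in $F$ rather than merely unboundedly large finite ones; without that bookkeeping one only reproves that $B(\dot A,F)$ has no infinite $H$-clique, which is strictly weaker than the desired countable anticlique decomposition.
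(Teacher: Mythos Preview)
Your argument is correct and shares the paper's core idea: manufacture nested open sets whose associated Boolean values form an infinite antichain inside $F$. The organizational differences are worth noting. The paper reduces via a Cantor--Bendixson step: it removes from $B(\dot A,F)$ every basic open set whose trace on $B(\dot A,F)$ is already a countable union of anticliques, and argues the remainder $B$ is empty. In $B$, every nonempty relatively open piece contains an $H$-edge (not necessarily involving a fixed point), and the recursion builds $O_n,U_n$ with $O_n\times U_n\subset H$ and $O_{n+1},U_{n+1}\subset U_n$; your $V_k,U_k$ play exactly the roles of $O_n,U_n$, and your incompatibility computation $c_j\wedge c_k\leq c_j\wedge d_j=0$ is the explicit form of what the paper states as ``$O_n\times O_m\subset H$ for $n\neq m$''. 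Your local-freedom coloring replaces the Cantor--Bendixson step by a direct decomposition and anchors the recursion at a single bad point $x$; this is marginally more elementary (no transfinite removal) at the cost of a slightly more delicate bookkeeping of the nesting, but neither approach buys anything the other does not.

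One cosmetic point: to keep $U_k$ basic after intersecting with $U_{k-1}$, either assume your countable basis is closed under finite intersections, or simply shrink $U_k$ to a basic subset containing $x$ at each step; the rest of the argument is unaffected.
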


\begin{proof}
Remove all basic open neighborhoods $O$ from the set $B(\dot A, F)$ such that $O\cap B(\dot A, F)$ is a union of countably many $H$-anticliques; it will be enough to show that the remainder $B$ is empty. Suppose not; then for every basic open set $O\subset X$, the set $B\cap O$, if nonempty, is not an $H$-anticlique. This allows us to build by induction on $n\in\gw$ basic open sets $O_n, U_n\subset X$ such that

\begin{itemize}
\item $O_n\times U_n\subset H$;
\item $O_{n+1}, U_{n+1}\subset U_n$;
\item the sets $B\cap O_n$ and $B\cap U_n$ are both nonempty.
\end{itemize}

For each $n\in\gw$, let $p_n\in F$ be the Boolean value of $\|\check O_n\cap\dot A\neq 0\|$. By the assumption on $\Phi$,
there must be numbers $n\neq m$ such that the conditions $p_n, p_m$ are compatible. Denote their lower bound by $q$.
Then $q\Vdash\dot A\cap\check O_n\neq 0$ and $\dot A\cap\check O_m\neq 0$, which together with the fact that $O_n\times O_m\subset H$
contradicts the assumption that $\dot A$ is forced to be an $H$-anticlique.
\end{proof}

Now, let $p\in P$ be a condition, let $M\prec H_\theta$ be a countable elementary submodel containing $P, p,\dot A, H, X$. Let 
$q\leq p$ be a $\Phi$-master condition for the model $M$. We claim that $q$ forces $\dot A$ to be covered by the $H$-anticliques in the model $M$; this will complete the proof.

Suppose that this fails and let $r\leq q$ and $x\in X$ be a point which is not in any anticlique in the model $M$ and yet $r\Vdash\check x\in\dot A$. Let $F\in M$ be a set satisfying $\Phi$ and containing all conditions $s\in RO(P)\cap M$ such that $s\geq r$. Then,
for every basic open set $O\subset X$ containing $x$ it is the case that $\|\check O\cap\dot A\neq 0\|\geq r$, and since the Boolean value
is an element of the model $M$, it is the case that $\|\check O\cap\dot A\neq 0\|\in F$ and so $x\in B(F, \dot A)$. The latter set is a union of $H$-anticliques in the model $M$ as per the claim. This is a contradiction.
\end{proof}

Steprans \cite{steprans:cellularity}and Todorcevic \cite[Theorem 7]{todorcevic:cellularity} produced for every number $k\geq 2$ a poset $P_k$ which is $\gs$-$k$-linked and yet adds an anticlique for an open hypergraph  in dimension $k+1$ which is not covered by countably many anticliques in the ground model. Thus, the various finite dimensions of open hypergraphs do have significance. Once finitely additive measures enter the picture, all finite dimensions are well-behaved:

\begin{theorem}
Suppose that $\Phi$ is a regularity property such that $\Phi(F)$ implies that there is a finitely additive probability measure $\mu$ on $B$ and a real number $\eps>0$ such that $\forall p\in F\ \mu(p)>\eps$.
Then, for every $n\in\gw$, every second countable space $X$, and every open set $H\subset X^n$, every $H$-anticlique in $\Phi$-proper extension is covered by countably many ground model $H$-anticliques. 
\end{theorem}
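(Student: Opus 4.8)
The plan is to follow the proof of Theorem~\ref{litheorem} essentially verbatim at the level of the $\Phi$-master condition, isolating all the new content into a single claim about the auxiliary set $B(\dot A,F)=\{x\in X:\text{for every open }O\ni x,\ \|\check O\cap\dot A\neq 0\|\in F\}$. Precisely, for a name $\dot A$ for an $H$-anticlique and any $F\subset RO(P)$ with $\Phi(F)$, I would prove that $B(\dot A,F)$ is a union of countably many $H$-anticliques; granting this, the second half of the proof of Theorem~\ref{litheorem} transfers without change. One takes $M\prec H_\theta$ containing $P,p,\dot A,H,X$ and a $\Phi$-master condition $q\le p$; if $q$ failed to force $\dot A$ to be covered by the $H$-anticliques of $M$, one obtains $r\le q$ and a point $x$ lying in no $H$-anticlique of $M$ with $r\Vdash\check x\in\dot A$, takes $F\in M$ with $\Phi(F)$ containing every $s\in RO(P)\cap M$ with $s\ge r$, and notes that every basic neighborhood $O\ni x$ has $\|\check O\cap\dot A\neq 0\|\ge r$ and lies in $M$, hence in $F$; thus $x\in B(\dot A,F)$, a union of $H$-anticliques in $M$ by the claim, a contradiction.

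The one genuinely new tool, replacing the ``$F$ has no infinite antichain'' input of Theorem~\ref{litheorem}, is a measure pigeonhole. I would first record: if $\Phi(F)$ yields a finitely additive probability measure $\mu$ on $RO(P)$ and $\eps>0$ with $\mu(p)>\eps$ for all $p\in F$, then for every $m$ and every $N>(m-1)/\eps$, any $N$ elements of $F$ contain $m$ with a common nonzero lower bound. This is a one-line computation inside the finite subalgebra they generate: writing each $\mu(p_i)$ as the $\mu$-mass of the atoms below $p_i$, one has $\sum_{i<N}\mu(p_i)=\sum_a\mathrm{mult}(a)\,\mu(a)$, where $\mathrm{mult}(a)$ counts the $p_i$ above the atom $a$; if every multiplicity were at most $m-1$ the right-hand side would be at most $m-1$, whereas the left-hand side is larger than $N\eps>m-1$, so some atom lies below at least $m$ of the $p_i$. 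Applied with $m=n$ this produces a single condition below $n$ chosen elements of $F$.

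To prove the claim I would imitate the inner claim of Theorem~\ref{litheorem}. Discarding from $B(\dot A,F)$ every basic open $O$ for which $O\cap B(\dot A,F)$ is already a union of countably many $H$-anticliques (only countably many sets are discarded, and their union is again such a countable union), it suffices to show that the remaining core $B$ is empty, where $B$ has the strong property $(\ast)$ that \emph{no} nonempty relatively open piece of it is a union of countably many $H$-anticliques. If $B\neq\emptyset$, the goal is an infinite sequence of basic open sets $O_0,O_1,\dots$, each meeting $B$, that is \emph{box-homogeneous}: for all $k_1<\cdots<k_n$ one has $O_{k_1}\times\cdots\times O_{k_n}\subset H$. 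Given such a sequence, each $p_k=\|\check O_k\cap\dot A\neq 0\|$ lies in $F$ (as $O_k$ meets $B\subset B(\dot A,F)$) and has $\mu(p_k)>\eps$; the measure pigeonhole yields $q\neq 0$ below $n$ of them, say $O_{k_1},\dots,O_{k_n}$, and $q$ forces $\dot A$ to meet each $O_{k_j}$, producing a tuple of $\dot A$ inside the box $O_{k_1}\times\cdots\times O_{k_n}\subset H$ and contradicting that $\dot A$ is an $H$-anticlique.

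The main obstacle is the construction of the box-homogeneous sequence when $n\ge 3$. For $n=2$ the nesting trick of Theorem~\ref{litheorem} suffices, because a single edge supplies one ``forward direction'' and the next neighborhood can simply be placed inside it around the witnessing point of $B$. For $n\ge 3$ a single edge supplies $n-1$ forward directions, and the new point must complete \emph{all} previously recorded $(n-1)$-subconfigurations to boxes simultaneously, i.e.\ lie in the intersection of many forward regions; this intersection need not meet $B$. This is exactly the place where property $(\ast)$ (local non-coverability, strictly stronger than merely being locally non-anticlique) is indispensable: examples of the ``rainbow'' or finite-colouring type show that a merely locally non-anticlique set can fail to carry any box-homogeneous sequence, but such sets are always coverable by their colour classes and hence have empty core---consistent with the Steprans--Todorcevic phenomenon cited in the text, where $\gs$-$k$-linkedness (no measure present) genuinely fails in dimension $k+1$. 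I would therefore carry out the construction as a fusion along a countable base for $X$, repeatedly shrinking all previously chosen neighborhoods and using $(\ast)$ to keep locating edges of $H$ inside $B$ within the shrinking arenas; the crux is a rank/derivative analysis showing that under $(\ast)$ the relevant forward-region intersections do meet $B$ at a small enough scale, so that finite box-homogeneous configurations always extend. Establishing this extension lemma is the technical heart of the argument.
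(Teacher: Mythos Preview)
Your reduction to the claim about $B(\dot A,F)$ and the closing argument via a $\Phi$-master condition match the paper exactly. The divergence is entirely in the proof of the inner claim, and there your proposal has a genuine gap.

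You aim to build a single infinite box-homogeneous sequence of basic open sets meeting the core $B$ and then apply your pigeonhole lemma once at the end. You correctly flag that extending a finite box-homogeneous configuration by one more set is the crux, and you defer this to an unspecified ``rank/derivative analysis'' exploiting property~$(\ast)$. But this extension lemma is never proved, and $(\ast)$ does not obviously yield it: to extend, the new set must land inside the intersection of the forward regions of all previously recorded $(n-1)$-subconfigurations, and the complement of such a forward region is not an $H$-anticlique, so $(\ast)$ gives no leverage on whether $B$ meets that intersection. Your own remarks about the Steprans--Todorcevic examples point in the same direction: the purely combinatorial route (no measure) genuinely fails in higher dimensions, so one should not expect a measure-free construction of the box-homogeneous sequence followed by a single terminal use of $\mu$.

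The paper's argument avoids this obstacle by using the measure \emph{inside} the recursion rather than only at the end. At stage $m$ one works in an arena meeting $B$; since $B$ restricted to the arena is not an $H$-anticlique, one finds a single $n$-box $\prod_{i\in n}O_m^i\subset H$ with each $O_m^i$ meeting $B$. The conjunction $\bigwedge_{i\in n}q(O_m^i)$ is zero (a nonzero value would force an $H$-tuple into $\dot A$), while $\mu(q(O_m^0))>\eps$; by finite additivity some $i_m\neq 0$ satisfies $\mu\bigl(q(O_m^0)\setminus q(O_m^{i_m})\bigr)\geq\eps/n$. One records $q_m=q(O_m^0)\setminus q(O_m^{i_m})$ and takes $O_m^{i_m}$ as the arena for stage $m+1$. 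The nesting gives $q_{m'}\leq q(O_{m'}^0)\leq q(O_m^{i_m})$ for $m'>m$, so the $q_m$ are pairwise incompatible; infinitely many disjoint elements of mass $\geq\eps/n$ contradict $\mu$ being a probability measure. Note that only the single-step fact ``$B$ is locally not an $H$-anticlique'' is ever used, exactly as in Theorem~\ref{litheorem}; the measure supplies what your combinatorial extension lemma was meant to supply.
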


\begin{proof}
Let $P$ be a $\Phi$-proper poset and $\dot A$ a $P$-name for an $H$-anticlique. Let $F\subset RO(P)$ be a set with $\Phi(F)$. Let $B(\dot A, F)=\{x\in X:$ for every open neighborhood $O\subset X$ with $x\in O$,
$\|\check O\cap\dot A\neq 0\|\in F\}$.

\begin{claim}
$B(\dot A, F)\subset X$ is a union of countably many $H$-anticliques.
\end{claim}

\begin{proof}
First, remove from the set $B(\dot A, F)$ all open neighborhoods in which the set is the union of countably many anticliques. We claim that the remainder $B\subset X$ is empty; this will complete the proof of the claim.

Suppose for contradiction that $B\neq 0$. Note that for every open neighborhood $O\subset X$, if $O\cap B\neq 0$ then $O\cap B$ is not an $H$-anticlique. Let $\mu$ be a finitely additive probability measure on $RO(P)$
such that for some fixed $\eps>0$, $\mu(p)\geq\eps$ for every condition $p\in F$. For every open set $O\subset X$, write $q(O)=\|\check O\cap\dot A\neq 0\|$.
By induction on $m\in\gw$
build basic open sets $O_m^i:i\in n$ and numbers $0\neq i_m\in n$ so that

\begin{itemize}
\item for every $i\in n$ it is the case that $B\cap O_m^i\neq 0$;
\item $\prod_iO_m^i\subset H$;
\item $O_{m+1}^i\subset O_m^{i_m}$;
\item writing $q_m=q(O_m^0)-q(O_m^{i_m})$, it is the case that $\mu(q_m)\geq\eps/n$.
\end{itemize}

\noindent This is easy to do: at stage $m$, the set $B\cap O_m^{i_m}$ is nonempty and therefore not an anticlique, which makes it possible to find sets $O_{m+1}^i$ for $i\in n$ satisfying the first three items.
Now, since $\mu(q(O_m^0))>\eps$, if for every number $0\neq i\in n$ it were the case that $\mu(q(O_m^0)-q(O_m^i))<\eps/n$, then the conjunction
$\bigwedge_iq(O_m^i)$ would have positive $\mu$-mass by the finite additivity of $\mu$. This conjunction would force $\dot A$ to contain an $H$-edge, contradicting the initial assumptions.

In the end, the conditions $q_m$ for $m\neq 0$ form an antichain and each of them has $\mu$-mass at least $\eps/n$, a contradiction with the finite additivity of the probability measure $\mu$.
\end{proof}

The rest of the argument follows word by word the conclusion of the proof of Theorem~\ref{litheorem}.
\end{proof}

\begin{theorem}
Suppose that $\Phi$ is a regularity property such that $\Phi(F)$ implies that $F$ contains no infinite antichains. Suppose that $P$ is a $\Phi$-proper poset and $\kappa$ is a cardinal. For every function $f\in\kappa^\kappa$ in the $P$-extension, if $f\restriction a$ is in the ground model for every countable ground model set $a\subset\kappa$, then $f$ is in the ground model.
\end{theorem}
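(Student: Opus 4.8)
The plan is to mirror the Y-c.c.\ argument for the same statement (the theorem preceding Corollary~\ref{branchcorollary}), replacing the two appeals to the c.c.c.\ by appeals to the master condition, and replacing ``$F$ is a filter'' by the weaker ``$F$ has no infinite antichain''. As there, it suffices to show that below every condition $p$ forcing the hypotheses on $\dot f$ there is a condition forcing $\dot f$ equal to a ground model function. Fix such a $p$, let $M\prec H_\theta$ contain $P,p,\dot f,\kappa,\Phi$, and let $q\leq p$ be a $\Phi$-master condition for $M$. The first point is that master condition alone forces the values on $M$ back into $M$: for $\ga\in\kappa\cap M$ the set of conditions deciding $\dot f(\check\ga)$ is dense and lies in $M$, so its trace on $M$ is predense below $q$, whence $q\Vdash\dot f(\check\ga)\in M$. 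Since $\kappa\cap M$ is a ground model countable set, the hypothesis gives $q\Vdash\dot f\restriction(\kappa\cap M)\in V$, so I may pass to $q'\leq q$ deciding this restriction to be a specific ground model function $h\colon\kappa\cap M\to\kappa\cap M$.

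Next I would apply $\Phi$-properness to $q'$ to obtain $F\in M$ with $\Phi(F)$ containing every element of $RO(P)\cap M$ weaker than $q'$, and set $g=\{\langle\ga,\gb\rangle\colon\|\dot f(\check\ga)=\check\gb\|\in F\}\in M$. For fixed $\ga$ the values $\|\dot f(\check\ga)=\check\gb\|$ are pairwise disjoint, so the section $g_\ga=\{\gb\colon\langle\ga,\gb\rangle\in g\}$ indexes an antichain contained in $F$; by the hypothesis on $\Phi$ this antichain, hence $g_\ga$, is finite. Since $h(\ga)\in g_\ga$ for $\ga\in\kappa\cap M$, elementarity gives that every section of $g$ is finite and nonempty. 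The replacement for the second use of the c.c.c., the spreading from $M$ to all of $\kappa$, is again the master condition: applying the master property to the dense set (an element of $M$) consisting of the conditions below $\bigwedge_\ga\|\dot f(\check\ga)\in\check g_\ga\|$ together with the conditions below some $\|\dot f(\check\ga)\notin\check g_\ga\|$ shows that any $r\leq q'$ meeting the $M$-trace must land in the first part, so $q'\Vdash\dot f(\check\ga)\in\check g_\ga$ for \emph{every} $\ga<\kappa$. Thus $q'$ forces $\dot f$ into the ground model product $\prod_{\ga<\kappa}g_\ga$ of finite sets, pinned by $h$ on $\kappa\cap M$.

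The hard part will be the final step: upgrading the finite-valued envelope $g$ to a single ground model function, equivalently pinning $\dot f(\check\ga)$ at the coordinates $\ga\notin M$ at which $g_\ga$ may have more than one element. On $\kappa\cap M$ the values are already forced by $h$, so there I would replace $g_\ga$ by $\{h(\ga)\}$; the genuine difficulty is confined to the finite branching off $M$. This is precisely the place where ``no infinite antichain'' must be brought to bear a second time: a countably closed branch-adding poset produces exactly such a $q'\Vdash\dot f\in\prod_\ga g_\ga$ with $\dot f\restriction a\in V$ for every countable ground model $a$ yet $\dot f\notin V$, and is excluded only because no $F\in M$ as above can exist for it (a no-infinite-antichain $F\in M$ cannot contain the cofinal chain of initial segments of such a master condition). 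Accordingly, the plan to finish is to iterate the construction along an $\in$-increasing chain of models $M=M_0\in M_1\in\cdots$ carrying a descending sequence of $\Phi$-master conditions, intersecting the successive finite envelopes and using the no-infinite-antichain property of the accompanying sets $F$ to prevent the branching from persisting; amalgamating the master conditions through the chain should then yield a condition $q^{*}\leq q'$ and a single function $g^{*}\in V$ with $q^{*}\Vdash\dot f=\check g^{*}$. Making this last amalgamation actually collapse the off-$M$ branching to singletons is the step I expect to require the most care.
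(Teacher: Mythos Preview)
Your setup through step~3 is fine and parallels the paper: fix $M$, a $\Phi$-master $q\leq p$, strengthen to $q'$ deciding $\dot f\restriction(\kappa\cap M)=h$, and extract $F\in M$ with $\Phi(F)$ containing everything in $RO(P)\cap M$ above $q'$. Your relation $g$ with finite sections $g_\ga$ and the argument that $q'\Vdash\dot f(\check\ga)\in\check g_\ga$ for all $\ga$ are correct.

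The gap is in step~4. Your finite-sectioned envelope $g$ has thrown away information: knowing only that each $g_\ga$ is finite does not prevent $\prod_\ga g_\ga$ from carrying $2^\kappa$ many branches, each with all countable restrictions in the ground model. Your proposed remedy, descending through an $\in$-chain of models with successively stronger $\Phi$-master conditions, cannot work in general because $\Phi$-proper posets need not be countably closed, so there is no way to amalgamate an infinite descending sequence of master conditions into a single $q^*$.

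The paper avoids this by extracting more from $F$ than the section-wise data. Instead of $g$, it defines
\[
S=\{g\colon g\text{ is a countable partial function on }\kappa\text{ and }\|\check g\subset\dot f\|\in F\}\in M.
\]
The key feature is that $S$ contains no infinite set of \emph{pairwise incompatible} functions, since incompatible partial functions give incompatible Boolean values and $F$ has no infinite antichain. Together with the observation (via master condition and elementarity, just as you used) that every countable $a\subset\kappa$ is the domain of some $g\in S$, this makes $S$ a ``coherent system''. A purely combinatorial argument (an ultrafilter on $\kappa^\kappa$ for existence, and the no-infinite-incompatible-set property for finiteness) then shows that the set $H=\{e\in\kappa^\kappa\colon e\restriction a\in S\text{ for all countable }a\}$ is finite and nonempty, and lies in $M$. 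Finally one checks that $h$ is extended by some $e\in H$: otherwise a finite $c\subset\kappa\cap M$ separates $h$ from every $e\in H$, but then $\{g\in S\colon g\text{ compatible with }h\restriction c\}$ is again a coherent system in $M$ and produces a new element of $H$ extending $h\restriction c$, a contradiction. So $r\Vdash\dot f=\check e$ for this $e\in M$, and no tower of models is needed.
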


\begin{proof}
We will start with an abstract claim. Let $\kappa$ be an uncountable cardinal. A \emph{coherent system} on $\kappa$ is a collection $S$ of partial countable functions on $\kappa$, closed under subsets, such that for every countable set $a\subset\kappa$ there is $g\in S$ with $\dom(g)=a$, and there is no infinite collection of pairwise incompatible functions in $S$.

\begin{claim}
For every coherent system $S$ on $\kappa$, the set $H=\{f\in\kappa^\kappa$: for every countable set $a\subset \kappa$, $f\restriction a\in S\}$ is nonempty and finite.
\end{claim}

\begin{proof}
To see that the set $H$ is nonempty, consider the sets $H_a=\{f\in\kappa^\kappa\colon f\restriction a\in S\}$ for every countable set $a\subset\kappa$. Intersection of any countable collection of such sets is nonempty by the assumptions on $S$. Let $U$ be an ultrafilter on $\kappa^\kappa$ containing all sets $H_a$ for $a\subset\kappa$ countable. For each such set $a\subset\gw$,
there are only finitely many functions in $S$ with domain $a$, and so one of them, denoted by $g_a$, satisfies $\{f\in\kappa^\kappa\colon g_a\subset f\}\in U$. It is immediate that $\bigcup_ag_a\in H$.

To prove the finiteness of $H$, suppose for contradiction that $f_n$ for $n\in\gw$ are pairwise distinct functions in $H$. Then, there is a countable set $a\subset\kappa$ such that the functions $f_n\restriction a$  for $n\in\gw$ are pairwise distinct. They all belong to the set $S$, contradicting the coherence assumption on $S$. 
\end{proof}

Now suppose that $P$ is a $\Phi$-proper poset and $\dot f$ is a $P$-name for a function from $\kappa$ to $\kappa$.
Let $p\in P$ be a condition forcing $\dot f\restriction a\in V$ for every countable set $a\subset\kappa$; we must produce a function $e\in\kappa^\kappa$ and a stronger condition forcing $\check e=\dot f$. Let $M\prec H_\theta$ be a countable elementary submodel containing $P, p, \dot f$, and let $q\leq p$ be a $\Phi$-master condition for $M$. Find a condition $r\leq q$ deciding all values of $\dot f\restriction M$, yielding a function $h\colon M\to\kappa$. We will find a function $e\in M\cap \kappa^\kappa$ such that
$h\subset e$. Then, since $r$ is a master condition for $M$ and $r\Vdash\check e\restriction M=\dot f\restriction M(=\check h)$, it must be the case that $r\Vdash \check e=\dot f$. This will complete the proof.

Towards the construction of the function $e$, let $F\subset RO(P)$ be an upwards closed set in the model $M$ such that $\Phi(F)$ holds and $F$ contains all elements of $RO(P)\cap M$ weaker than $r$.  Let $S=\{g\colon g$ is a partial function from $\kappa$ to $\kappa$ with countable domain and the Boolean value $\|\check g\subset\dot f\|$ belongs to $F\}$. We claim that $S\in M$ is a coherent system. Closure of $S$ under subsets is clear from the definitions. $S$ contains no infinite set of pairwise incompatible functions since $F$ contains no infinite antichain. For every countable set $a\in M$ the function $h\restriction a$ is in $M\cap S$ since $r\Vdash \dot f\restriction a=h\restriction a\in V$ and $r$ is a master condition for the model $M$. By the elementarity of the model $M$, the coherence of the system $S$ follows.

Now, let $H\in M$ be the finite set of functions from $\kappa$ to $\kappa$ obtained by the application of the claim to the coherent system $S$. We claim that the function $h$ is a subset of one element of $H$. Indeed, if this was not the case, then there would be a finite set $c\subset\kappa\cap M$ such that $h\restriction c$ is not a subset of any function in the finite set $H$.
Let $T=\{g\in S\colon g$ is a function compatible with $h\restriction c\}$. Just as in the previous paragraph, $T\in M$ is a coherent system,
and there is a function $e\in\kappa^\kappa$ such that every restriction of $e$ to a countable set is in $T$. This function must appear on the finite list $H$ while $e\restriction c=h\restriction c$. Contradiction!
\end{proof}

\section{Iteration theorems}

As with most forcing properties, the point of the properties introduced in the previous section is that they are preserved under suitable iterations and their associated forcing axioms can be forced with a poset in the same category. 

\begin{definition}
Suppose that $\Phi$ is a $G$-regularity property of subsets of complete Boolean algebras. 

\begin{enumerate}
\item if $\kappa$ is a cardinal then $\Phi$-$\mathrm{MA}_\kappa$ is the statement that for every c.c.c.\ $\Phi$-c.c.\ poset $P$ and every list of open dense subsets of $P$ of size $\kappa$ there is a filter on $P$ meeting them all;
\item $\Phi$-PFA is the statement that for every $\Phi$-proper poset $P$ and every list of $\aleph_1$ many open dense subsets of $P$ there is a filter on $P$ meeting them all.
\end{enumerate}
\end{definition}

In the important special case of $\Phi(F)=$``$F$ is centered'', we will write $\mathrm{YMA}_\kappa$ and YPFA for $\Phi$-$\mathrm{MA}_\kappa$ and $\Phi$-PFA.

\begin{theorem}
\label{fsitheorem}
Let $\Phi$ be a $G$-regularity property. Then the conjunction of c.c.c.\ and $\Phi$-c.c.\ is preserved under

\begin{enumerate}
\item restriction to a condition;
\item complete subalgebras;
\item the finite support iteration.
\end{enumerate}
\end{theorem}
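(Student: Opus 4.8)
The plan is to handle the three closure operations separately, reading (1) and (2) almost directly off the regularity axioms and reserving the real work for (3). At the outset note that $\mathrm{c.c.c.}$ is preserved by restriction and by complete subalgebras for trivial reasons (an antichain in $P\restriction p$ or in a complete subalgebra of $RO(P)$ is an antichain in $P$), and by finite support iterations of $\mathrm{c.c.c.}$ posets by the classical Solovay--Tennenbaum theorem; so in every case only $\Phi$-c.c.\ remains to be checked, and in (3) we may freely use that $P_\alpha$ is already known to be $\mathrm{c.c.c.}$ Throughout I verify $\Phi$-c.c.\ for models $M$ containing all the relevant parameters (the poset at hand, $G$, and the data naming it), the convention already in force in the proof of Theorem~\ref{ctheorem}.

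For (1), let $p\in P$ and let $M\prec H_\theta$ contain $P,p,G$. Given $q\leq p$, apply $\Phi$-c.c.\ of $P$ to $M$ and $q$ to get $g\in G\cap M$ and $F\in M$ with $\Phi(g,F,RO(P))$ and $F\supseteq\{s\in RO(P)\cap M:s\geq q\}$. Put $F'=F\cap(RO(P)\restriction p)\in M$; since $RO(P\restriction p)=RO(P)\restriction p$, the restriction axiom (clause (3) of Definition~\ref{regularitydefinition}) gives $\Phi(g,F',RO(P\restriction p))$, and any $s\in RO(P\restriction p)\cap M$ above $q$ lies in $F$ and below $p$, hence in $F'$. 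For (2), let $B_0$ be a complete subalgebra of $B_1=RO(P)$, let $M\ni B_0,P,G$, and let $q\in B_0$; choosing $q'\in P$ below $q$ and applying $\Phi$-c.c.\ of $P$ to $M,q'$ yields $g\in G\cap M$ and $F\in M$ with $\Phi(g,F,B_1)$ and $F\supseteq\{s\in B_1\cap M:s\geq q\}$, whereupon $F'=F\cap B_0\in M$ satisfies $\Phi(g,F',B_0)$ by the complete-subalgebra axiom (4) and plainly contains every $s\in B_0\cap M$ above $q$.

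For (3) I induct on the length $\alpha$, proving $P_\alpha$ is $\Phi$-c.c.; since $P_\alpha\in M$ we always have $\alpha\in M$. The successor step $P_{\gb+1}=P_\gb*\dot Q_\gb$ is exactly the province of the iteration axiom. Given $q=\langle q_0,\dot q_1\rangle$ and $M$, work first in $V^{P_\gb}$: applying the forced $\Phi$-c.c.\ of $\dot Q_\gb$ to $M[\dot G]$ and $\dot q_1$ and reflecting into $M$, fix $\dot g_1,\dot F_1\in M$ with $1\Vdash\Phi(\dot g_1,\dot F_1,\dot B_1)$ and $\dot F_1\supseteq\{s\in\dot B_1\cap M[\dot G]:s\geq\dot q_1\}$. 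Using $\mathrm{c.c.c.}$ and elementarity pick $a\in M$ deciding $\dot g_1$ to a value $g_1\in G\cap M$ and compatible with $q_0$; redefining $\dot F_1=\{1\}$ off $a$ (legitimate since the nontriviality axiom (1) holds for every $g$) we may assume $1\Vdash\Phi(\check g_1,\dot F_1,\dot B_1)$ while preserving the containment below $a$. Now apply the induction hypothesis to $P_\gb$, $M$ and $q_0'=q_0\wedge a$ to get $g_0\in G\cap M$ and an upward closed $F_0\in M$ (by closure up, axiom (2)) with $\Phi(g_0,F_0,B_0)$ and $F_0\supseteq\{s\in B_0\cap M:s\geq q_0'\}$. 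The iteration axiom (5) then yields $\Phi(g_0*g_1,F_0*\dot F_1,B_0*\dot B_1)$, with $F_0*\dot F_1\in M$ and $g_0*g_1\in G\cap M$. To check containment, let $s=\langle s_0,\dot s_1\rangle\in RO(P_{\gb+1})\cap M$ with $s\geq q$: then $s_0\geq q_0\geq q_0'$, and below $q_0'$ one has $\dot s_1\geq\dot q_1$ with $\dot s_1\in M[\dot G]$, so $q_0'\leq s_0\wedge\|\dot s_1\in\dot F_1\|$; this meet lies in $B_0\cap M$ and is above $q_0'$, hence in $F_0$, so $s\in F_0*\dot F_1$.

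The limit step is the crux. Fix $M$ (so $\alpha\in M$) and $q$, and set $\gd=\sup(M\cap\alpha)$. Because each $s\in RO(P_\alpha)\cap M$ is, by $\mathrm{c.c.c.}$ and elementarity, the sum of a countable antichain lying in $M$, its support is a countable subset of $M\cap\alpha$ and so $s\in RO(P_\gd)$; as $RO(P_\gd)$ is a complete subalgebra of $RO(P_\alpha)$ and $s\geq q\liff s\geq\bar q$ for the reduct $\bar q$ of $q$, axiom (4) reduces the task to producing $F\in M$ with $\Phi(g,F,RO(P_\gd))$ covering $\{s\in RO(P_\gd)\cap M:s\geq\bar q\}$ (then lifted back to $RO(P_\alpha)$ by (4)). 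For each $\gb\in M\cap\alpha$ the induction hypothesis applies to $P_\gb\in M$, giving stage-witnesses $F_\gb\in M$; along a cofinal $\omega$-chain $\langle\gb_n\rangle$ these can be spliced by the two-step axiom into $F_{\gb_{n+1}}=F_{\gb_n}*\dot F_n$, where $RO(P_{\gb_{n+1}})=RO(P_{\gb_n})*\dot C_n$ and the tail $\dot C_n$ is a shorter finite support iteration, hence forced $\Phi$-c.c., so $\Phi$ holds at every finite level of the tower. The genuine difficulty, and the step I expect to be the main obstacle, is to assemble from this tower a single set $F\in M$ that also captures the elements $s$ whose countable support is cofinal in $\gd$ (these lie in no single $RO(P_\gb)$ with $\gb<\gd$) and to verify $\Phi(g,F,RO(P_\gd))$ for it, given that the axioms supply only a two-step, not an $\omega$-step, iteration clause. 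The natural remedy is to define $F$ directly by the finite support iterated $*$-operation along the cofinal $\omega$-chain and to prove the corresponding $\omega$-iteration instance of (5) by a secondary induction, taking particular care in the case $\mathrm{cf}(\alpha)>\omega$, where $\gd\notin M$ and the chain must be chosen so that $F$ is still definable inside $M$. In the central case $\Phi(F)=$``$F$ is centered'' this final verification is painless, since centeredness is finitary: any finite subset of $F$ is captured at one level of the tower and there inherits a lower bound.
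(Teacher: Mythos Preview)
Your handling of (1), (2), and the successor case of (3) is correct and essentially matches the paper.

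The limit step, however, is on the wrong track. You split at $\gd=\sup(M\cap\alpha)$ and attempt to build an $\omega$-tower of witnesses along a cofinal chain in $M\cap\alpha$, hoping to splice them into a single $F$. As you yourself note, this runs into two genuine obstacles: the regularity axioms supply only a two-step iteration clause, not an $\omega$-step one, and when $\mathrm{cf}(\alpha)>\omega$ the ordinal $\gd$ need not lie in $M$, so neither the chain nor the resulting $F$ can obviously be placed in $M$. You do not actually carry this program out, and for a general $\Phi$ it is not clear that it can be carried out.

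The paper sidesteps all of this with a much simpler idea that exploits the finite support of the \emph{condition} rather than the cofinality of the model. Given $q\in P_\alpha$ and $M$, put $\gamma=\max(M\cap\dom(q))$; since $\dom(q)$ is finite this is an ordinal of $M$ strictly below the limit $\alpha$ (and the case $M\cap\dom(q)=\emptyset$ is trivial). A \emph{single} two-step split $P_\alpha=P_0*\dot P_1$, with $P_0$ the iteration up through coordinate $\gamma$, already suffices: the induction hypothesis at length $\gamma+1<\alpha$ yields $F_0,g\in M$ for $P_0$ and $q_0=q\restriction P_0$. The key point is that $\dom(q)\setminus(\gamma+1)$ is disjoint from $M$ by the choice of $\gamma$, so for any $p=\langle p_0,\dot p_1\rangle\in RO(P_\alpha)\cap M$ with $p\geq q$ one must have $q_0\Vdash\dot p_1=1$: otherwise (using c.c.c.) a condition $\dot r_1\in M$ incompatible with $\dot p_1$ would have support contained in $M$, hence disjoint from the support of $\dot q_1$, hence compatible with $\dot q_1$, contradicting $\dot p_1\geq\dot q_1$. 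Consequently the set $F=\{\langle p_0,\dot p_1\rangle:p_0\wedge\|\dot p_1=1\|\in F_0\}\in M$, which satisfies $\Phi$ by the nontriviality clause together with one use of the iteration clause, already contains every element of $RO(P_\alpha)\cap M$ above $q$. No tower and no $\omega$-step iteration are needed.
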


\begin{proof}
The first two items follow easily from the subalgebra and restriction clauses of regularity. The two-step iteration part of (3) follows
just as easily from the iteration clause of regularity. If $P_0$ has $\Phi$-c.c.\ and $\dot P_1$ is a $P$-name such that $P_0\Vdash\dot P_1$ has $\Phi$-c.c., we must show that $P_0*\dot P_1$ has $\Phi$-c.c.

Let $M\prec H_\theta$ be a countable elementary submodel containing $P_0, \dot P_1$ and let $\langle q_0, \dot q_1\rangle$ be an arbitrary condition in the iteration.
We must find a set $F\in M$ in $RO(P_0)*RO(\dot P_1)$ and $g\in G\cap M$ such that $\Phi(g, F)$ holds, and for every condition $\langle p_0, \dot p_1\rangle\in RO(P_0)*RO(\dot P_1)$ in the model $M$, if $\langle p_0, \dot p_1\rangle\geq \langle q_0, \dot q_1\rangle$ then $\langle p_0, \dot p_1\rangle\in F$. To this end, write $\dot G_0$ for the canonical $P_0$-name for its generic filter and $M[\dot G_0]$ for the $P_0$-name for the set $\{\tau/\dot G_0:\tau\in M$ is a $P_0$-name$\}$. It is well known that $M[\dot G_0]$ is forced to be a countable elementary submodel of $H_\theta$ of the generic extension ${V[\dot G_0]}$ and its intersection with the ground model is equal to $M$.
Strengthening $q_0$ if necessary and using $\Phi$-c.c.\ of the poset $P_1$ in the extension, we may find a name $\dot F_1\in M$  for a subset of $RO(\dot P_1)$ and $g_1\in G\cap M$ such that $1\Vdash\Phi(\check g_1, \dot F_1)$, and 
$q_0\Vdash \{p\in RO(\dot P_1)\cap M[\dot G_0]\colon p\geq \dot q_1\}\subset\dot F_1$. Use the $\Phi$-c.c.\ of $P_0$ to find some $g_0\in G\cap M$ and $F_0\in M$ such that $F_0\subset RO(P_0)$, $\Phi(g_0, F_0)$ and $\{p\in RO(P_0)\cap M:p\geq q_0\}\subset F_0$.
By the iteration clause of regularity, $\Phi(g_0*g_1, F_0*\dot F_1)$ holds. We claim that $F=F_0*\dot F_1$ witnesses $\Phi$-c.c.\ for the iteration.

Indeed, suppose that $\langle p_0, \dot p_1\rangle\in M$ is a condition in the iteration weaker than $\langle q_0, \dot q_1\rangle$.
Thus, $q_0\Vdash\dot p_1\geq\dot q_1$, $\dot p_1\in M[\dot G_0]$, and so $\dot p_1\in\dot F_1$. The Boolean value $\|\dot p_1\in\dot F_1\|$ is in the model $M$ and it is weaker than $q_0$, so the conjunction $p_0\land \|\dot p_1\in\dot F_1\|\in M$ is still weaker than $q_0$ and so belongs to the set $F_0$. Thus, $\langle p_0, \dot p_1\rangle\in F_0*\dot F_1$ as desired.

The general proof proceeds by induction on $\gb=$the length of the iteration. The case $\gb$ successor is handled by the two-step iteration case. Suppose that
$\gb$ is limit, $M$ is a countable elementary submodel of $H_\theta$, and $q$ is any condition in the iteration. The domain of $q$ is a finite subset of $\gb$; let $\ga=\max(M\cap\dom(q))$. 
Write $P$ for the whole iteration, $P_0$ for the initial segment of the iteration up to $\ga$ inclusive, and $\dot P_1$
for the remainder of the iteration; thus, $\dot P_1$ is a $P_0$-name. The condition $q$ can be viewed as a pair $\langle q_0, \dot q_1\rangle$ where $q_0\in P_0$ and $q_0\Vdash\dot q_1\in\dot P_1$.
Since $\ga\in\gb$, the induction hypothesis guarantees the existence of a subset $F_0\in M$ of $RO(P_0)$ and an element $g\in M\cap G$ such that $\Phi(g, F_0)$ holds and for every condition $p\in RO(P_0)$ in the model $M$, weaker than
$q_0$, belongs to the set $F_0$. Let $F\in M$ be the subset of $RO(P)$  consisting of pairs $\langle p_0, \dot p_1\rangle\in RO(P_0)*RO(\dot P_1)$
where $p_0\land \|\dot p_1=1\|\in F_0$. By the nontriviality and the finite iteration clauses of regularity, $\Phi(g*h, F, RO(P))$ holds for every $h\in G$; we claim that the set $F$ works as desired.
Suppose that $p\geq q$ is a condition in the model $M$ in $RO(P)$; we must show that $p\in F$.
 The condition $p$ can be viewed as a pair
$\langle p_0, p_1\rangle$ such that $p_0\in RO(P_0)$ and $p_0\Vdash\dot p_1\in RO(\dot P_1)$. Since $p\geq q$,
it is the case that $p_0\geq q_0$ and $q_0\Vdash\dot p_1\geq \dot q_1$. 

The important point is that the latter formula means that $q_0\Vdash\dot p_1=1$.
 If this were not the case, by the c.c.c.\ of $P_1$ there would be a strengthening $r_0\leq q_0$ and a condition $\dot r_1\in M\cap P_1$ such that $r_0\Vdash \dot r_1$ is incompatible with $\dot p_1$. Now, by c.c.c.\ of $P_0$, $P_0$ forces the domain of $\dot r_1$ to be a subset of $M$ and therefore disjoint from $\dom(\dot q_1)$. Thus, $r_0\Vdash\dot r_1, \dot q_1$ are compatible, contradicting the assumption that
$q_0\Vdash\dot p_1\geq\dot q_1$.

Now, the Boolean value $\|\dot p_1=1\|\in RO(P_0)$ is an element of $M$ and it is weaker than $q_0$. The same is true of $p_0$. Therefore, the conjunction $p_0\land \|\dot p_1=1\|$ must belong to the set $F_0$, and so $\langle p_0, \dot p_1\rangle\in F$ as desired.
\end{proof}

As an abstract consequence of Theorem~\ref{fsitheorem}, it is possible to force Martin's Axiom for c.c.c.\ $\Phi$-c.c.\ posets
with a $\Phi$-c.c.\ poset. The following simple general theorem does not seem to appear in the literature:

\begin{theorem}
Suppose that $\Psi$ is a property of complete Boolean algebras which provably in ZFC implies c.c.c.\ and is preserved under complete subalgebras and the finite support iteration. Let $\kappa$ be an uncountable regular cardinal and suppose that $\Diamond_{\cof(\kappa)\cap\kappa^+}$ holds. There is a complete Boolean algebra satisfying $\Psi$ forcing $\mathrm{MA}_\kappa$ for posets satisfying $\Psi$.
\end{theorem}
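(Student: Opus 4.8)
The plan is to run a Solovay--Tennenbaum style finite support iteration of length $\kappa^+$, using the guessing sequence both to secure the cardinal arithmetic and to anticipate every relevant poset together with its $\kappa$ dense sets. First I would reduce to small posets: it is enough to force $\mathrm{MA}_\kappa$ for those $\Psi$-posets whose underlying set is a subset of $\kappa$. Given any $B$ with $\Psi(B)$ and dense sets $\langle D_i\colon i<\kappa\rangle$, choose inside each $D_i$ a maximal antichain, which is countable since $\Psi$ implies c.c.c.; the subalgebra $Q'$ generated by the union of these antichains then has size at most $\kappa$, a filter meeting those antichains in $Q'$ induces a filter meeting the $D_i$ in $B$, and $RO(Q')$ is a complete subalgebra of $B$, hence satisfies $\Psi$ by the closure hypothesis. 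Thus the objects to be handled are targets $\langle Q,\langle D_i\colon i<\kappa\rangle\rangle$ with $Q$ a $\Psi$-poset on a subset of $\kappa$.

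Next I would record the arithmetic. A $\Diamond$-sequence on $\cof(\kappa)\cap\kappa^+$ guesses every $A\subseteq\kappa^+$ correctly at stationarily many points of cofinality $\kappa$; evaluating this at such points $\gd>\kappa$ for $A\subseteq\kappa$ shows that every subset of $\kappa$ already occurs on the sequence, so $2^\kappa=\kappa^+$ and hence $(\kappa^+)^\kappa=\kappa^+$. With this in hand, a routine induction counting c.c.c.\ nice names (each coded by $\kappa$ countable antichains, so at most $(\kappa^+)^\kappa=\kappa^+$ of them at every stage) keeps $|P_\ga|\le\kappa^+$ throughout the iteration. I would then build a finite support iteration $\langle P_\ga,\dot Q_\ga\colon\ga<\kappa^+\rangle$: fixing a $\Diamond$-sequence $\langle A_\gd\colon\gd\in\cof(\kappa)\cap\kappa^+\rangle$ and a coding of $P_\gd$-names for targets by subsets of $\gd$ (arranged so that, on a club of $\gd$, $P_\gd$-names correspond exactly to subsets of $\gd$), I set $\dot Q_\gd$ to be the poset part of the target decoded from $A_\gd$ whenever $P_\gd$ forces that decoding to be a $\Psi$-poset, and trivial otherwise. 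Since each iterand then has completion satisfying $\Psi$, the hypothesis that $\Psi$ is preserved under two-step and finite support iteration yields $\Psi(RO(P_\ga))$ for all $\ga$, in particular c.c.c., so $P_{\kappa^+}$ is a $\Psi$-algebra preserving cardinals.

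The heart of the argument is the capturing step. Given a target $\langle\dot Q,\langle\dot D_i\colon i<\kappa\rangle\rangle$ in the final extension, each name is, by c.c.c., a small object with finite supports, so the set $C$ of $\gd<\kappa^+$ for which $\dot Q$ and every $\dot D_i$ are $P_\gd$-names is closed unbounded: closure is immediate, unboundedness holds because the union of the relevant supports has size at most $\kappa<\mathrm{cf}(\kappa^+)$, and intersecting the $\kappa$ many clubs (one per $\dot D_i$) again yields a club since $\kappa<\kappa^+$ and $\kappa^+$ is regular. Coding the whole target as a single $A\subseteq\kappa^+$, the diamond produces stationarily many $\gd\in\cof(\kappa)\cap\kappa^+$ with $A\cap\gd=A_\gd$; intersecting with $C$ and with the club on which the coding behaves, I pick such a $\gd$. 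As $\gd>\kappa$, the entire index set $\kappa$ of dense sets lies below $\gd$, so all $\kappa$ names are captured by the single guess $A_\gd$, which therefore decodes to exactly this target as a $P_\gd$-name; hence $\dot Q_\gd=\dot Q$. The generic filter added at stage $\gd$ is then a filter on $Q$ meeting every $D_i$, all of which are realized already in $V[G_\gd]$, and this filter survives to $V[G_{\kappa^+}]$, witnessing the desired instance of $\mathrm{MA}_\kappa$.

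The main obstacle is precisely this last coordination. One must design the coding of names so that membership ``$A_\gd$ decodes to a genuine $P_\gd$-name'' is correctly recognized on a club, and one must guess on points of cofinality exactly $\kappa$ so that a \emph{single} guess simultaneously delivers the poset and all $\kappa$ of its dense sets at a stage where they are already realized; the reflection (club) computation together with the arithmetic $2^\kappa=\kappa^+$ are exactly what guarantee that the club $C$ meets the diamond's stationary set. Everything else---preservation of $\Psi$, c.c.c.\ of the limit, and the $|P_\ga|\le\kappa^+$ bound---follows mechanically from the stated closure hypotheses on $\Psi$ and the name count.
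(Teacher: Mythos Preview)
Your outline is the standard Solovay--Tennenbaum bookkeeping and matches the paper's architecture, but you have skipped the one genuinely nontrivial step, and that step is precisely where the hypotheses on $\Psi$ earn their keep.

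The gap is the sentence ``hence $\dot Q_\gd=\dot Q$.'' For this equality to hold your iteration must see, at stage $\gd$, that $P_\gd\Vdash\Psi(\dot Q)$. You only know that $\Psi(Q)$ holds in the \emph{final} extension $V[G_{\kappa^+}]$; nothing in the hypotheses says $\Psi$ is downward absolute between forcing extensions, so $\Psi(Q)$ in $V[G_\gd]$ does not follow from the capture alone. Without it, your diamond may well guess the target perfectly at $\gd$, yet your iteration is obliged to take $\dot Q_\gd$ trivial, and no filter is produced. The paper fills this gap by the following argument (which in your setup reads as): work in $V[G_\gd]$, let $R^\gd$ be the remainder of the iteration, and observe that $R^\gd$ has $\Psi$ (finite support closure) and $R^\gd\Vdash\Psi(\check Q)$ (this is exactly the assumption that $\Psi(Q)$ holds in the final extension, after arranging the name so that $\Psi(\tau)$ is forced outright). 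Hence $R^\gd*\check Q$ has $\Psi$ by the iteration hypothesis. Now the map $q\mapsto\langle 1,\check q\rangle$ embeds $Q$ completely into $R^\gd*\check Q$ --- this is where one must check that maximal antichains of $Q$ in $V[G_\gd]$ stay maximal in the final model, and the paper secures this via an elementary submodel $N$ with $N^\gw\subset N$. Closure of $\Psi$ under complete subalgebras then gives $\Psi(Q)$ in $V[G_\gd]$, as required. All three closure hypotheses on $\Psi$ are used here, and nowhere else in the proof; your write-up invokes only the iteration closure.

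A secondary issue: in your reduction step, the assertion that $RO(Q')$ is a complete subalgebra of $B$ is not correct as stated --- the Boolean subalgebra generated by the antichains need not have its completion sit completely inside $B$. The paper instead passes to $N\cap P$ for a $\mathfrak c$-sized elementary $N$ closed under $\gw$-sequences, which guarantees regularity directly; alternatively you could take the \emph{complete} subalgebra generated by the antichains, but then the size bound needs $\kappa^{\aleph_0}=\kappa$, which you would have to justify.
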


\begin{corollary}
\label{ymatheorem}
Let $\Phi$ be a $G$-regularity property of sets of Boolean algebras. Let $\kappa$ be an uncountable regular cardinal and suppose that $\Diamond_{\cof(\kappa)\cap\kappa^+}$ holds. There is a c.c.c.\ $\Phi$-c.c.\ poset forcing $\Phi$-$\mathrm{MA}_\kappa$.
\end{corollary}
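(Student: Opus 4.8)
The plan is to deduce the corollary directly from the general theorem stated immediately above it, by instantiating $\Psi$ as the conjunction of c.c.c.\ and $\Phi$-c.c. Concretely, I would set $\Psi(B)$ to be the assertion that the complete Boolean algebra $B$ is c.c.c.\ and $\Phi$-c.c. Since $\Phi$-c.c.\ is already formulated in terms of $RO(P)$, it is invariant under passage to the completion, so $\Psi$ is genuinely a property of complete Boolean algebras; and because c.c.c.\ is one of its two conjuncts, $\Psi$ provably implies c.c.c.\ in ZFC, the first hypothesis of the preceding theorem.

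Next I would verify the two preservation requirements. Both are furnished verbatim by Theorem~\ref{fsitheorem}, applied to the $G$-regularity property $\Phi$: clause (2) yields preservation of the conjunction of c.c.c.\ and $\Phi$-c.c.\ under complete subalgebras, and clause (3) yields its preservation under the finite support iteration. Thus $\Psi$ satisfies every hypothesis of the general theorem. Note in passing that the witnessing element $g\in G$ is absorbed inside the iteration clause of the $G$-regularity property exactly as in the proof of Theorem~\ref{fsitheorem}, so the ambient semigroup $G$ never obstructs treating $\Psi$ as a plain property of Boolean algebras.

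Applying the preceding theorem with this $\Psi$ and the given $\kappa$, $\Diamond_{\cof(\kappa)\cap\kappa^+}$, produces a complete Boolean algebra $B$ satisfying $\Psi$ which forces $\mathrm{MA}_\kappa$ for posets satisfying $\Psi$. It then remains only to unwind definitions: $B$ satisfying $\Psi$ means precisely that $B$, viewed as a poset, is c.c.c.\ and $\Phi$-c.c., and a poset satisfies $\Psi$ if and only if it is c.c.c.\ and $\Phi$-c.c., so $\mathrm{MA}_\kappa$ for $\Psi$-posets is literally the statement $\Phi$-$\mathrm{MA}_\kappa$. Hence $B$ is the desired c.c.c.\ $\Phi$-c.c.\ poset forcing $\Phi$-$\mathrm{MA}_\kappa$, and the corollary follows.

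I do not expect the corollary itself to present any real obstacle; its proof is a bookkeeping-free application of the general theorem together with the preservation content already isolated in Theorem~\ref{fsitheorem}. All of the genuine difficulty sits one level up, in the preceding theorem, whose proof I anticipate to be a $\Diamond_{\cof(\kappa)\cap\kappa^+}$-guided Solovay--Tennenbaum construction: a finite support iteration of length $\kappa^+$ whose diamond sequence anticipates, and then catches in the iteration, every $\Psi$-poset of size less than $\kappa$ together with a $\kappa$-sized family of its dense sets, with the preservation of $\Psi$ along the way guaranteeing that the final algebra again satisfies $\Psi$. Since that theorem is taken as given here, the present argument reduces to the verification and definitional bookkeeping above.
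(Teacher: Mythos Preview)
Your proposal is correct and matches the paper's intent: the corollary is deduced from the preceding general theorem by taking $\Psi$ to be the conjunction of c.c.c.\ and $\Phi$-c.c., with the required preservation properties supplied by Theorem~\ref{fsitheorem}. In fact the paper gives no separate argument for the corollary at all---the proof block following it is the proof of the general theorem (it works with $\Psi$ throughout), so your write-up actually spells out more than the paper does.
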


\begin{proof}
Let $\langle E_\ga:\ga\in\kappa^+\rangle$ be a diamond sequence for $\cof(\kappa)\cap\kappa^+$. This specifically means the following. Fix a wellordering $\prec$ of the set $H_{\kappa^+}$ of ordertype $\kappa^+$. Each set $E_\ga$ is of hereditary cardinality $\kappa$ and whenever $A\subset H_{\kappa^+}$ is a set, then the set 
$$\{\ga\in\cof(\kappa)\cap\kappa^+\colon E_\ga=\{x\in H_{\kappa^+}\colon x\in A\text{ and the rank of }x\text{ in }\prec\text{ is less than }\ga\}\}$$ 
is stationary.

In the following, for a poset $P$ we will write $\Psi(P)$ for the statement $\Psi(RO(P))$.
Consider the finite support iteration $R=\langle R_\ga, \dot Q_\ga\colon \ga\in\kappa^+\rangle$ obtained by the following rule: if $\ga$ is an ordinal such that $E_\ga$ codes an $R_\ga$-name for a poset, and in the $R_\ga$-extension $\Psi(\dot E_\ga)$ holds, then $\dot Q_\ga=E_\ga$. Otherwise, let $\dot Q_\ga=$the $R_\ga$-name for the trivial poset. We claim that the iteration works as required.

Suppose that in the $R$-extension, $P$ is a poset, $\Psi(P)$ holds, and $\langle D_\gb\colon \gb\in\kappa\rangle$ are open dense subsets
of it. We must produce a filter meeting them all. First of all, without loss of generality, we may assume that $\|P\|\leq\fc\leq\kappa^+$.
If this were not the case, let $N\prec H_\theta$ be an elementary submodel of size $\fc$ containing $P,\tau, \langle D_\gb:\gb\in\kappa\rangle$ as elements, $\kappa$ as a subset,
and such that $N^\gw\subset N$. Then, $N\cap P$ is a regular subposet of $P$, therefore $\Psi(N\cap P)$ holds by the closure of $\Psi$ under complete subalgebras, 
it has size $\leq\fc$ and all sets $D_\gb\cap N$ for $\gb\in\kappa$ are dense in it.
If there is a filter $G\subset N\cap P$ meeting all the sets $D_\gb\cap N$ for $\gb\in\kappa$, then we are done.

Thus, without loss of generality assume that $\|P\|=\kappa^+$, $P\subset H_{\kappa^+}$ and use the c.c.c. of $R$ find an $R$-name $\tau\subset H_{\kappa^+}$ for it so that $R\Vdash\Psi(\tau)$. Back in the ground model,
find an elementary submodel $N\prec H_\theta$ of size $\kappa$ containing $P,\tau, \langle D_\gb:\gb\in\kappa\rangle$ as elements, $\kappa$ as a subset,
such that $N^\gw\subset N$ and, writing $\ga= N\cap\kappa^+$, it is the case that $\tau\cap N=E_\ga$. We claim that $R_\ga\Vdash E_\ga$ is a poset satisfying $\Psi$,
thus $\dot Q_\ga=E_\ga$, and the generic filter added by the $\ga$-th stage of the iteration generates a filter on $P$
meeting all the dense subsets as required.

Let $G_\ga\subset R_\ga$ be a generic filter and for the remainder of the proof work in $V[G_\ga]$. Let $R^\ga$ be the remainder of the iteration,
so $\Psi(R^\ga)$ holds. Write $P_\ga=E_\ga/G_\ga$; thus $R^\ga\Vdash P_\ga\subset P$. The elementarity of the model $N$ has an important consequence:

\begin{claim}
The map $\pi\colon P_\ga\to R^\ga*\dot P$ given by $\pi(p)=\langle 1, \check p\rangle$ is a regular embedding.
\end{claim}

\begin{proof}
We must verify that if $A\subset P_\ga$ is a maximal antichain, then $\pi''A\subset R^\ga*\dot P$
is a maximal antichain as well, or equivalently $R^\ga\Vdash\dot A\subset\dot P$ is maximal.
To prove this, suppose that $G^\ga\subset R^\ga$ is a filter generic over the model $V[G_\ga]$, and let
$G\subset R$ be the concatenation of $G_\ga$ and $G^\ga$. Then in $V[G]$ the following holds:

\begin{itemize}
\item $N[G]$ is an elementary submodel of $H_\theta[G]$;
\item $P\cap N[G]=P_\ga$;
\item $(P_\ga)^\gw\cap V[G_\ga]\subset N[G]$.
\end{itemize}

\noindent For the last item, return to the ground model for a moment
and observe that every $R_\ga$-name $\gs$ for an element of $(\dot P_\ga)^\gw$ is at the same time an $R$-name for
an element of $(\dot P)^\gw$. At the same time, $N\cap R=R_\ga$ and $N$ is closed under countable sequences, therefore $N$ contains $\gs$ as an element.

It follows that
$A\in N[G]$. Since $A\subset P_\ga$ is a maximal antichain and $P\cap N[G]=P_\ga$, $N[G]\models A\subset P$ is a maximal antichain. Since $N[G]$ is elementary in $H_\theta[G]$, $A\subset P$ must be a maximal antichain as desired.
\end{proof}

Now, still arguing in the model $V[G_\ga]$, both steps in the iteration $R^\ga*\dot P$ satisfy $\Psi$ and so does the iteration.
$P_\ga$ is a regular subposet of this iteration and therefore satisfies $\Psi$ as well. Therefore, at stage $\ga$ of the iteration the poset $P_\ga$ is forced with,
and the resulting filter on $P_\ga\subset P$ meets all the open dense subsets on the list $\langle D_\gb:\gb\in\kappa\rangle$.
\end{proof}

Now, let us move to the proper variations.
$\Phi$-properness is not preserved under the countable support iteration. To provide a trivial example, consider the countable support iteration of an atomic poset with two atoms, of length $\gw_1$. Clearly, each poset in the iteration is Y-proper, and the iteration
is isomorphic to adding a subset of $\gw_1$ with countable approximations. This poset is not Y-proper by Theorem~\ref{yproperpreservationtheorem}(1). Even so, it is possible to force the forcing axiom for Y-proper posets with an Y-proper poset using the technology of \cite{neeman:twotypes}. This is the contents of the following theorem.

\begin{theorem}
\label{ypfaforcingtheorem}
Suppose that $\Phi$ is a $G$-regularity property and there is a supercompact cardinal. Then there is a $\Phi$-proper forcing $P$ forcing $\Phi$-PFA.
\end{theorem}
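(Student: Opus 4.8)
The plan is to build $P$ by Neeman's method of forcing with finite sequences of models of two types \cite{neeman:twotypes}, using a supercompact cardinal $\kappa$ together with a Laver function $\ell\colon\kappa\to H_\kappa$ that guesses (names for) $\Phi$-proper posets. A condition in $P$ is a pair $\langle s, f\rangle$ where $s$ is a finite $\in$-increasing sequence of models of two types---countable elementary submodels $M\prec H_\theta$ and \emph{large} models $W$ (say $W=V_\beta$ for $\beta<\kappa$ in a fixed class on which $\ell$ is defined), subject to Neeman's coherence requirements---and $f$ is a finite working part assigning, to each large model $W$ on $s$ for which $\ell$ guesses an appropriate $\Phi$-proper poset $Q_W$, a condition in $Q_W$. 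The ordering refines both the side-condition order (enlargement of the model sequence) and the working-part order. As usual, $P$ has the $\kappa$-c.c., it is proper, and it forces $\kappa=\aone^+=\fc$.

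First I would verify that $P$ forces $\Phi$-PFA. Suppose in the extension $\dot Q$ is a $\Phi$-proper poset and $\langle \dot D_\xi\colon\xi<\gw_1\rangle$ are names for open dense subsets of it. By the $\kappa$-c.c.\ and a standard reflection argument using supercompactness of $\kappa$, there are cofinally many large models $W$ on the generic model-sequence at which $\ell$ guesses a name for $\dot Q$ together with its dense sets; at such a stage the working part forces with $Q_W$, and the countable models on the sequence below $W$ supply the master conditions that guarantee genericity across all $\aone$ dense sets. The upward closure of the working coordinate at such a $W$ then yields, in the final extension, a filter on $\dot Q$ meeting every $\dot D_\xi$.

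The main obstacle, and the heart of the proof, is to show that $P$ itself is $\Phi$-proper. Given a suitable countable $M\prec H_\theta$ and $p\in P\cap M$, the candidate $\Phi$-master condition $q$ is obtained by adjoining $M$ (more precisely $M\cap H_\chi$ for the relevant $\chi$) to the model-sequence of $p$; Neeman's theory gives that this $q$ is an ordinary master condition and that every $r\leq q$ factors, through the model $M$, into a part lying in $M$ and a part sitting above $M$. What must be checked is the regularity clause: for every $r\leq q$ one needs $g\in G\cap M$ and $F\in M$ with $\Phi(g, F)$ such that $F$ contains all elements of $RO(P)\cap M$ weaker than $r$. Here I would decompose $r$ along the finitely many large models it mentions into consecutive intervals; each interval behaves like a two-step iteration whose active factor is a $\Phi$-proper working-part poset $Q_W$, for which a local witness $\langle g_W, F_W\rangle\in M$ exists by $\Phi$-properness of $Q_W$ in the appropriate submodel. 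I then amalgamate these finitely many local witnesses, exactly as in the finite support iteration argument of Theorem~\ref{fsitheorem}, by repeated application of the iteration clause of the $G$-regularity property (Definition~\ref{regularitydefinition}): setting $g=g_{W_0}*\cdots*g_{W_{n-1}}$ and $F=F_{W_0}*\cdots*F_{W_{n-1}}$ yields $\Phi(g, F)$, and a computation with Boolean values shows that any $s\in RO(P)\cap M$ weaker than $r$ lands in $F$.

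I anticipate two principal difficulties. First, one must match Neeman's combinatorial factorization of conditions in $P$ to the algebraic two-step-iteration picture required by the iteration clause of regularity; in particular the pure side-condition coordinates (the models, carrying no working data) must be absorbed without disturbing the witness, which is why one pads the inactive factors using the nontriviality clause $\Phi(\{1\})$, just as in the limit step of Theorem~\ref{fsitheorem}. Second, the amalgamation must be carried out inside $M$ so that the resulting $\langle g, F\rangle$ genuinely lies in $M$; this is ensured by choosing all intermediate models and local witnesses by elementarity within $M$, and by the fact that the decomposition of $r$ into intervals is itself coded by an object of $M$.
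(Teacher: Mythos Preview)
Your overall architecture matches the paper's: Neeman's two-type side-condition forcing, Laver function on a supercompact $\kappa$, working parts at transitive nodes where the Laver function guesses a $\Phi$-proper iterand. The paper organizes the $\Phi$-properness proof as a transfinite induction on the iteration length $\beta$ rather than decomposing $r$ into all its intervals at once, but that is a stylistic difference; unfolding the induction gives roughly your picture.

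The genuine gap is the sentence ``a computation with Boolean values shows that any $s\in RO(P)\cap M$ weaker than $r$ lands in $F$.'' Your appeal to the limit step of Theorem~\ref{fsitheorem} is misleading: there the condition has finite support, so beyond $\max(M\cap\dom(q))$ it is literally $1$ and padding by the nontriviality clause is legitimate. In the Neeman poset, the condition $r$ has nontrivial structure at and above $M$ (the node $M$ itself, and possibly further transitive and countable nodes with nontrivial working parts), so you cannot just pad with $1$ above the largest transitive node $V_\delta$ below $M$. What is actually needed is a projection statement: for every $s\in RO(P_\beta)\cap M$ with $s\geq r$, the lower projection of $s$ to $RO(P_{\delta+1})$ is $\geq r\restriction\delta+1$. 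The paper isolates exactly this as a separate compatibility lemma (conditions in ``$\Delta$-position'' are compatible): if $q\in M$ is a condition below $\bar p$ whose restriction to $\delta+1$ is compatible with $r\restriction\delta+1$ and whose model sequence contains the countable nodes of $m(r)$ between $V_\delta$ and $M$, then $q$ is compatible with $r$ in $P_\beta$. This is a nontrivial amalgamation argument in Neeman's poset (one must splice the model sequences, close under intersections, and extend the working parts to be $\Phi$-master for the new countable nodes that appear), and it is the real engine of the proof. Without it you cannot conclude that your amalgamated $F$ captures every $s\in RO(P)\cap M$ above $r$.

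A smaller point: ``the decomposition of $r$ into intervals is itself coded by an object of $M$'' is not correct as stated, since $r\notin M$. What is true, and what the paper uses, is that $V_\delta\in M$ (because $m(r)$ is an $\in$-chain with $V_\delta$ below $M$), so $P_{\delta+1}\in M$ and the witness $\langle g,F_0\rangle$ for $r\restriction\delta+1$ can be found in $M$; the lift of $F_0$ to $RO(P_\beta)$ is then in $M$ as well.
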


\begin{proof}
We first verify the preservation of $\Phi$-properness under two-step iteration. This follows immediately from the iteration clause of regularity:

\begin{claim}
\label{itclaim}
If $P$ is $\Phi$-proper and $P\Vdash\dot Q$ is $\Phi$-proper, then $P*\dot Q$ is $\Phi$-proper. If $M\prec H_\theta$ is a countable elementary submodel containing $P$, $\dot Q$, and $p\in P$ is a $\Phi$-master condition for $M$ in $P$ and $p\Vdash\dot q$ is a $\Phi$-master condition for $M[G]$ in $\dot Q$, then $\langle p, \dot q\rangle$ is a $\Phi$-master condition for $M$ in $P*\dot Q$.
\end{claim}

Let $\kappa$ be an inaccessible cardinal and $f:\kappa\to V_\kappa$ be a function. Let $I\subset\kappa+1$ be the set of all inaccessible cardinals $\gb$ such that $\langle V_\gb, f\restriction\gb\rangle\prec \langle V_\kappa, f\rangle$; in particular, $\kappa\in I$.

For every ordinal $\gb\in I$ define the orders $Q_\gb$ by $m\in Q_\gb$ if $m$ is a finite $\in$-chain whose elements are either countable elementary submodels of $V_\gb$ (the \emph{countable nodes})
or sets $V_\gd$ for some $\gd\in I\cap\gb$ (the \emph{transitive nodes}). Moreover, the chain $m$
must be closed under intersections. The ordering is that of reverse inclusion. Observe that if $\gd\in\gb$ are elements of $I$ then $Q_\gd\subset Q_\gb$.
It is proved in~\cite{neeman:twotypes} that $Q_\beta$ is strongly proper and hence also $\Phi$-proper.

By transfinite recursion on $\gb\in I$ we will define partial orders $\langle P_\gb, \leq_\gb\rangle$. The canonical names for their respective generic filters will be denoted by
$\dot G_\gb$. The elements of $P_\gb$ will be certain pairs $p=\langle m(p), w(p)\rangle$,
where $m(p)\in Q_\gb$ and $w(p)$ is a function on $m(p)$. For such a pair $p$, if $\gd\in I$ is such that $V_\gd\in m(p)$, write $p\restriction\gd$ for the pair $\langle m(p)\cap V_\gd, w(p)\restriction V_\gd\rangle$.
The posets $P_\gb$ are defined by the following recursive formula.

A set $p$ is an element of $P_\gb$ if $p=\langle m(p), w(p)\rangle$ where $m(p)\in Q_\gb$ and $w(p)$ is a function with $\dom(w(p))=m(p)$ such that
for every transitive node $V_\gd\in m(p)$, $p\restriction\gd\in P_\gd$. Moreover, $w(p)(M)$ is equal to $\trash$ for all nodes $M\in m(p)$ except
possibly some transitive nodes $M=V_\gd$ such that $f(\gd)$ is a $P_\gd$-name, $P_\gd\Vdash f(\gd)$ is a $\Phi$-proper forcing, $w(p)(V_\gd)$ is a $P_\gd$-name for an element 
of $f(\gd)$, and for 
every countable node $N\in m(p)$ such that $\{ P_\gd, f(\gd)\}\in N$,  $p\restriction\gd\Vdash_{P_\gd}$ the condition $w(p)(V_\gd)$ is $\Phi$-master for the model $N[\dot G_\gd]$.

Note that due to the closure of $m(p)$ under intersections and to the fact that the set $I$ consists of inaccessible cardinals, it is sufficient to verify the last condition for all countable nodes $N$
which are between $V_\gd$ and the next transitive node on $m(p)$.

The ordering is defined by $q\leq_\gb p$ if $m(q)\leq m(p)$ and for every transitive node $V_\gd\in m(p)$, if $w(p)(V_\gd)=\trash$ then $w(q)(V_\gd)=\trash$, and if $w(q)(V_\gd)\neq\trash$ then $q\restriction\gd\leq_\gb p\restriction\gd$ and $q\restriction\gd\Vdash m(q)(V_\gd)\leq m(p)(V_\gd)$.

\begin{claim}
$\leq_\gb$ is a transitive relation.
\end{claim}

\begin{proof}
This is an elementary argument by transfinite induction on $\gb$.
\end{proof}

Suppose that $\gd,\gb$ are elements of $I$ such that $\gd\in\gb$. Define $p_\gd^0$ to be the condition in $P_\gb$ which is $\langle \{V_\gd\}, \{\langle V_\gd, \trash\rangle\}\rangle$. In the event that $f(\gd)$ happens to be a $P_\gd$-name and $P_\gd\Vdash f(\gd)$ is $\Phi$-proper,
then also define $p_\gd^1$ to be the condition in $P_\gb$ which is $\langle \{V_\gd\}, \{\langle V_\gd, 1_{f(\gd)}\rangle\}\rangle$.

\begin{claim}
Suppose that $\gd, \gb\in I$ are ordinals such that $\gd\in\gb$, and $p\in P_\gb$ is a condition below $p_\gd^0$ or $p_\gd^1$. Suppose that $q\in P_\gd$ and $q\leq_\gd p\restriction\gd$. Then $r=\langle m_q\cup m_p, w(q)\cup (w(p)\setminus V_\gd)\rangle$ is a condition in
$P_\gb$ and $r\leq_\gb p$.
\end{claim}

\begin{proof}
This is another elementary argument by transfinite induction on $\gb$.
\end{proof}

If $\gd\in I$ is an ordinal less than $\kappa$ such that $f(\gd)$ is an $P_\gd$-name for an $\Phi$-proper forcing, we will write $P_{\gd+1}$ for the two-step iteration $P_\gd*f(\gd)$. For an ordinal $\gb>\gd$ and a condition $p\in P_\gb$ such that
$V_\gd\in m(p)$ and $w(p)(V_\gd)\neq\trash$, we will write $p\restriction\gd+1$ for the condition in $P_{\gd+1}$ which is the pair $\langle p\restriction\gd, w(p)(V_\gd)\rangle$. The following claim is now easy to show:

\begin{claim}
\label{ittclaim}
Let $\gd, \gb$ be ordinals in $I$ such that $\gd\in\gb$.

\begin{enumerate}
\item The conditions $p_\gd^0$ and $p_\gd^1$ both force the filter $G_\gb\cap P_\gd$ to be $P_\gd$-generic;
\item if $\gd$ is such that $f(\gd)$ is an $P_\gd$-name for a $\Phi$-proper forcing, then $p_\gd^1\Vdash G_\gb\cap P_{\gd+1}$ is $P_{\gd+1}$-generic.
\end{enumerate}
\end{claim}

Let $p,q\in P_\gb$ be conditions. Say that $p, q$ are in $\Delta$\emph{-position} if there is a countable node $M\in m(p)$ such that the model $M$ contains $q$ as well as $P_\gg$ and $f(\gg)$ for all $V_\gg\in m(q)$ as elements, and
writing $V_\gd$ for the largest transitive node on $m(p)$ below $M$, it is the case that $V_\gd\in m(q)$, $q\restriction\gd+1$ is compatible with $p\restriction\gd+1$ and all countable nodes of $m(p)$ between $V_\gd$ and $M$ belong to $m(q)$.
If there are no transitive nodes in $m(p)$ below $M$, then we require just that all countable nodes of $m(p)$ below $M$ belong to $m(q)$.

\begin{claim}
\label{deltaclaim}
If $p, q$ are in $\Delta$-position then they are compatible in $P_\gb$.
\end{claim}

\begin{proof}
Let $M\in m(p)$ be the model witnessing the $\Delta$-position of $p,q$.
Let us treat the case that there is a largest transitive node on $m(p)$ below $M$, denote it by $V_\gd$, and assume that $w(p)(V_\gd)\neq\trash$. Since $p\restriction\gd+1$ and $q\restriction\gd+1$ are compatible, there is a condition $r\in P_\gd$
below both $p\restriction\gd+1$ and $q\restriction \gd+1$, and a $P_\gd$-name $\tau$ such that $r\Vdash_\gd\tau\leq w(p)(V_\gd), w(q)(V_\gd)$ in the poset $f(\gd)$. To construct the lower bound $s$ of $p,q$, we must define $m(s)$ and $w(s)$.

Let $m(s)=m(r)\cup m(q)\cup m(p)\cup n$, where $n$ is the set of all intersections of the form $N\cap V_\gg$, where $V_\gg$ is a transitive node on $m(q)$ and $N=M$ or else $M$ is one of the countable nodes on $m(p)$ above $M$ such
that there is no transitive node between $M$ and $N$. First, we must verify that $m$ is a condition in $Q_\gd$. This is a mechanical checking of the clauses of the definition of $Q_\gd$; we will outline only the nontrivial points in the argument. For the closure of $m(s)$ under intersection, the only nontrivial case is to check is that if $N_0\in m(q)\setminus V_\gd$ and $N_1\in m(p)\setminus V_\gd$ are countable nodes, then $N_0\cap N_1\in m(r)$. To see this, note that $N_0\in M$ and so $N_0\subset M$, $N_0\cap N_1=N_0\cap (N_1\cap M)$, the countable node $N_1\cap M$ is in $m(p)$ by the closure of $m(p)$ under intersections, and there are two cases.

\textbf{Case 1.} Either $N_1\cap M\in V_\gd$. Then $N_0\cap N_1= (N_0\cap V_\gd)\cap (N_1\cap M\cap V_\gd)$. Now, the model $N_0\cap V_\gd\in m(q)$ by the closure of $m(q)$ under intersections, $N_1\cap M\cap V_\gd\in m(p)$ by the closure of $m(p)$ under intersection, so both of them are in $m(r)$ since $r\leq p\restriction\gd, q\restriction\gd$, and so $N_0\cap N_1\in m(r)$ by the closure of $m(r)$ under intersections.

\textbf{Case 2.} Or $N_1\cap M\notin V_\gd$. Then $N_1\cap M$ must be one of the models on $m(p)$ between $V_\gd$ and $M$, or it may be equal to $M$ itself. In the former case $N_1\cap M\in m(q)$ as $p,q$ are in $\Delta$-position, and so $N_0\cap N_1=N_0\cap N_1\cap M\in m(q)$ by the closure of $m(q)$ under intersections. In the latter case, $N_0\cap N_1=N_0\cap M=N_0\in m(q)$, and we are finished.

To verify that $m(s)$ forms an $\in$-chain, first observe that $m(r)\cup m(q)\cup m(p)$ is a concatenation of three $\in$-chains ($m(r)$, $m(q)\setminus V_\gd$, and $m_p\setminus M$, since $m(r)\in V_\gd$ and $m(q)\in M$) and so an $\in$-chain. Now inspect the models in the set $n$. Let $V_\gg$ be a transitive node in $m(q)$ and $K$ its predecessor
in $m(q)$. Then $K, V_\gg\in M$. Since the countable nodes between $M$ and the next transitive node in $m(p)$ above $M$ form an $\in$-chain and all contain $M$ as an element, they also contain $K, V_\gg$ and so their intersections with $V_\gg$ form an $\in$-chain whose nodes all contain $K$ and are contained in $V_\gg$. This immediately implies that $m(s)$ forms an $\in$-chain.

The definition of $w(s)$ breaks into  several cases, all of which except for one are trivial.

\noindent \textbf{Case 1.} For $V_\gg\in m(r)$, let $w(s)(V_\gg)=w(r)(V_\gg)$. 

\noindent \textbf{Case 2.} The value $w(s)(V_\gd)$ will be equal to $\tau$. This condition is forced to be $\Phi$-master for all the relevant models on $m(s)$ above $V_\gd$: these models come either from $m(p)$ or $m(q)$ or from intersections with transitive nodes, and $\tau$ is stronger than both $w(p)(V_\gd)$ and $w(q)(V_\gd)$.

\noindent \textbf{Case 3.} Now suppose that $\gg$ is such that $\gd\in \gg$ and $V_\gg\in m(q)$,
$f(\gg)$ is a $P_\gg$-name for an $\Phi$-proper poset, and $w(q)(V_\gg)\neq\trash$.
To define $w(s)(V_\gg)$, consider the set $n_\gg$ of all countable nodes in $m(s)$ below the next transitive node $V_{\gg^*}$ above $V_\gg$ (if such a node does not exist, just take all countable nodes
on $m(s)$) which contain $P_\gg, f(\gg)$. Observe that this
set is linearly ordered by $\in$, starts with (perhaps) some models on $m(q)$, after which comes $M\cap V_{\gg^*}$ and then (perhaps) some other models. By the definition of $P_\gb$, the condition $q\restriction\gg$ forces in $P_\gg$ that $w(q)(V_\gg)$ is a $\Phi$-master condition for $K[G_\gg]$ for all models $K\in n_\gg\cap m(q)$ and the poset $f(\gg)$. Moreover, $P_\gg, f(\gg)$ and $w(q)(V_\gg)$ all belong to the next model $M\cap V_{\gg^*}$
on the set $n_\gg$ beyond the models from $m(q)$. Therefore, using the definition of $\Phi$-master condition repeatedly, gradually strengthening the $P_\gg$-name for the condition $w(q)(V_\gg)$, it is possible to find a name $w(s)(V_\gg)$ forced by $q\restriction\gg$
to be $\Phi$-master for all models $N[G_\gg]$ where $N\in n_\gg$.

\noindent\textbf{Case 4.} To define $w(s)(V_\gg)$ for the transitive nodes $V_\gg$ on $m(s)$ above the model $M$, just let $w(s)(V_\gg)=w(p)(V_\gg)$.

It is not difficult to verify now that $s=\langle m(s), w(s)\rangle$ is a condition, and it is a lower bound of $p, q$ in $P_\gd$.
\end{proof}

For every countable elementary submodel $M\prec H_\theta$ containing $\kappa, f$ and an ordinal $\gb\in I$ let $p_M\in P_\gb$ be the unique condition with $m(p)=\{M\cap V_\gb\}$.

\begin{claim}
Let $\gb\in I$ be an ordinal. The poset $P_\gb$ is $\Phi$-proper, and for every countable elementary submodel $M\prec H_\theta$ containing the ordinal $\gb$ as well as $\kappa, f$, the condition $p_M$ is a $\Phi$-master condition for $M$ in the poset $P_\gb$.
\end{claim}

\begin{proof}
This is proved by induction on $\gb\in I$. Suppose that $\gb\in I$ is an ordinal below which the statement has been verified, and let $M\prec H_\theta$ be a countable elementary submodel containing $\gb$.

To verify that $p_M$ is a master condition $P_\gb$, suppose that $p\leq p_M$ is an arbitrary condition and $D\in M$ is an open dense subset of $P_\gb$; we must produce a condition $q\in D\cap M$ compatible with $p$.
Strengthening $p$ if necessary, we may assume that $p\in D$. For definiteness assume
that there are some transitive nodes in $m(p)$ below $M\cap V_\gb$, and let $V_\gd$ denote the largest one of them. For definiteness assume that $w(p)(V_\gd)\neq\trash$,
the other cases are simpler.

By the closure of $m(p)$ under intersections, $V_\gd\cap M\in m(p)$ holds, and by the induction hypothesis, $p\restriction\gd$ is a master condition for the model $M$ in the poset $P_\gd$.
By the definition of the poset $P_\gb$, $p\restriction\gd\Vdash w(p)(V_\gd)$ is a master condition for $M[\dot G_\gd]$. Therefore, $p\restriction\gd+1$ is a master condition for $M$ in the poset $P_{\gd+1}$.
Now, $p\restriction\gd+1$ forces in $P_{\gd+1}$ that there is a condition $q\in D$ such that $q\restriction\gd+1$ is in the generic filter $G_{\gd+1}$ and $m(q)$ contains all countable nodes of $m(p)$ between $V_\gd$
and $M$. This is clear since $q=p$ will work. Since $p\restriction\gd+1$ is $M$-master, it forces that there must be such a condition $q$ in the model $M$. In other words, there must be a condition $q\in M\cap D$ such that
$p\restriction\gd+1$ and $q\restriction\gd+1$ are compatible and $m(q)$ contains all countable nodes of $m(p)$ between $V_\gd$
and $M$. But then, $p,q$ are in $\Delta$-position and therefore compatible. The proof that $p_M$ is a master condition for the model $M$ is complete.

To verify that $p_M$ is a $\Phi$-master condition, suppose that $p\in P_\gb$ is a condition below $p_M$; we must find an element $g\in G\cap M$ and a set $F\in M$ on $RO(P_\gb)$ such that $\Phi(g, F, RO(P_\gb))$ holds and such that for every condition $q\in M\cap RO(P)$, if $q\geq p$ then $q\in F$. 
For definiteness assume that there are some transitive nodes in $m(p)$ below $M\cap V_\gb$, and let $V_\gd$ denote the largest one of them. For definiteness, also assume that $w(p)(V_\gd)\neq\trash$,
the other cases are simpler.

Let $\bar p=\langle m(\bar p), w(\bar p)\rangle$ be the condition defined in the following way: $m(\bar p)$ contains $V_\gd$, all the countable nodes of $m(p)$ between $V_\gd$ and $M$, and the intersections of these nodes with $V_\gd$. The map $w(\bar p)$ returns only one nontrivial value, at $V_\gd$, where it indicates the sum of all conditions in $f(\gd)$ which are $\Phi$-master for all models on $m(\bar p)$ containing $P_\gd$ and $f(\gd)$. It is clear that $\bar p\in M$ is a condition weaker than $p$. By the restriction clause of regularity, it will be enough to find the requested set $F$ in $RO(P_\gb\restriction\bar p)$.

By Claim~\ref{ittclaim}, the algebra $A=RO(P_{\gd+1}\restriction \bar p)$ can be naturally viewed as a complete subalgebra of $B=RO(P_\gb\restriction\bar p)$. By the induction hypothesis applied at $\gd$ and the two-step iteration Claim~\ref{itclaim},
the condition $p\restriction\gd+1$ is $\Phi$-master for $M$ and $P_{\gd+1}$. Thus, there are a set $F_0\subset A$ and an element $g\in G$ in the model $M$ such that $\Phi(g, F_0, A)$ holds and
$F_0$ contains all elements of $A\cap M$ weaker than $p\restriction\delta+1$. We will show that the set $F$, obtained as the upwards closure of
$F_0$ in the algebra $B$, has the requested properties.

Certainly $\Phi(g, F, B)$ holds by the subalgebra and closure clauses of regularity, and $F\in M$. 
We must verify that if $b\in B\cap M$ is weaker
than $p$ then $b\in F$. To this end, consider the lower projection function $\proj\colon B\to A$ defined by $\proj(b)=\sum\{a\in A\colon a\leq b\}\leq b$. We claim that if $b\in B\cap M$ is weaker than $p$, then $\proj(b)\in A\cap M$ is weaker than $p\restriction\gd+1$. This will complete the proof as then $\proj(b)\leq b$ must be an element of $F_0$ and so $b\in F$.

Suppose for contradiction that $p\restriction\gd+1$ is not stronger than $\proj(b)$. Then $p\restriction\gd+1$ must be compatible in $B$ with $1-b$ by the definition of projection. Since $p\restriction\gd+1$ is a master condition for $M$ by the first part of the proof of the claim, this means that there must be a condition $q\leq \bar p$ in the poset $P_\gb$ and the model $M$ such that $q\restriction\gd+1$ is compatible with $p\restriction\gd+1$, and $q$ is below $1-b$.
But then, $p$ and $q$ are in $\Delta$-position, as $m(q)$ contains all nodes on $m(\bar p)$ and so all countable nodes on $m(p)$ between $V_\gd$ and $M$. The conditions $p,q$ are therefore compatible by Claim~\ref{deltaclaim}. Their common lower bound will be below both
$p$ and $1-b$, contradicting the assumption that $p\leq b$.
\end{proof}

Now, suppose that $\kappa$ is a supercompact cardinal and $f\colon\kappa\to V_\kappa$ is the Laver prediction function. Let $P=P_\kappa$ be the $\Phi$-proper forcing obtained from the function $f$ using the scheme above. A routine argument now shows that $P$ forces $\Phi$-PFA to hold.
\end{proof}

The iteration theorem allows us to finally prove some interesting consistency results.

\begin{theorem}
YPFA implies

\begin{enumerate}
\item PID;
\item there are only five cofinal types of directed posets of size $\aleph_1$;
\item all Aronszajn trees are special;
\item $\mathfrak{c}=\aleph_2$.
\end{enumerate}

\noindent YPFA does not imply

\begin{enumerate}
\item[(5)] OCA;
\item[(6)] c.c.c.\ is productive.
\end{enumerate}
\end{theorem}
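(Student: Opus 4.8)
The plan is to dispatch the four positive clauses by producing, for each, a Y-proper poset to which YPFA applies, and to dispatch the two non-implications by preserving a fixed ground-model witness through the Y-proper forcing of Theorem~\ref{ypfaforcingtheorem}. The first preliminary observation I would record is that every Y-c.c.\ poset is Y-proper: a c.c.c.\ poset makes every condition a master condition for every countable $M\prec H_\theta$ containing it, and the filters provided by Y-c.c.\ then verify the Y-master clause. Granting this, clause (3) is immediate: for an Aronszajn tree $T$ the specialization poset $P(T)$ is Y-c.c.\ by Corollary~\ref{aronszajncorollary}, hence Y-proper, and applying YPFA to the $\aone$ dense sets $D_t=\{p\in P(T)\colon t\in\dom(p)\}$ $(t\in T)$ yields a total specializing function, so $T$ is special. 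For clause (1) I would, given a P-ideal $I$ with all singletons on a set $X$, split on the PID alternatives: if $X$ is a countable union of sets $Y_n$ with $\power(Y_n)\cap I=[Y_n]^{<\anought}$ the second alternative holds, and otherwise the associated PID poset is Y-proper by Theorem~\ref{pidtheorem}, so applying YPFA to the natural $\aone$ dense sets forcing the generic set $Z$ to be uncountable produces an uncountable $Z\subseteq X$ with $[Z]^{\anought}\subseteq I$ already in the ground model. Clause (2) is the Todorcevic five-types classification; here I would run his PFA proof verbatim, noting that the posets adding the needed Tukey reductions are ideal based and hence Y-proper by Theorem~\ref{idealbasedtheorem}, so that YPFA replaces PFA.

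For clause (4) the lower bound $\fc\geq\atwo$ is soft: $\gs$-centered posets are Y-c.c.\ by Theorem~\ref{ccctheorem}, hence Y-proper, so YPFA yields $\mathrm{MA}_{\aone}$ for $\gs$-centered posets, i.e.\ $\mathfrak{p}\geq\atwo$, whence $\fc\geq\mathfrak{p}\geq\atwo$. The upper bound $\fc\leq\atwo$ is the crux of the whole theorem. I would adapt the standard proof that the bounded forcing axiom BPFA implies $\fc=\atwo$: fix a ladder system together with the walks and oscillation machinery on $\gw_1$, and for a hypothetical list of $\atwo$ many reals use the finite-condition forcing, with countable elementary submodels as side conditions, that codes each real into the behaviour of a generic subset of $\gw_1$ at a single ordinal below $\atwo$; applying YPFA to the $\aone$ dense sets that read off one real then shows every real is already coded, bounding the reals by $\atwo$. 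The essential obstacle is that this coding forcing must be shown to be Y-proper. This is plausible precisely because it is of the finite-side-condition form for which the present framework is designed, and I expect to verify its Y-properness by the methods of Section~\ref{generalsection}; but that verification is the real content of (4).

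For the two non-implications I would realize YPFA as the extension by the Y-proper forcing $P$ of Theorem~\ref{ypfaforcingtheorem} over a ground model $V$ carrying a supercompact cardinal and satisfying GCH, so that CH-constructions are available in $V$. Since $P$ is Y-proper, the anticlique-preservation of Theorem~\ref{yproperpreservationtheorem}(3) applies to it. The one structural point to respect is that OCA for open graphs on \emph{Polish} spaces is a theorem of ZFC (Feng), so any $\neg$OCA witness must live on a non-Polish second countable space; I would therefore use a fixed uncountable subspace rather than a Polish space, which has the further benefit that such a space acquires no new points in $V^P$, so the ground-model anticliques literally cover the extension anticliques. Concretely, under CH fix an entangled set $E\subseteq\R$ of size $\aone$. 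For clause (5) it yields a clopen graph $H$ on the $\aone$-sized second countable space $E$ with neither an uncountable clique nor an uncountable independent set; the latter already forces $H$ not to be countably chromatic, and Theorem~\ref{yproperpreservationtheorem}(3) keeps $H$ not countably chromatic in $V^P$, while applying the same theorem to the complementary clopen graph $\overline H$ shows that $V^P$ contains no uncountable $H$-clique. Thus $H$ witnesses $\neg$OCA in $V^P$.

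For clause (6) the subtlety is that YPFA implies Suslin's Hypothesis: by clause (3) every Aronszajn tree is special, and a special Aronszajn tree has an uncountable antichain, so no Suslin tree survives and a Suslin tree cannot be the witness. Instead I would take from the same entangled set the two homogeneity posets $P_0,P_1$ of finite $c$-homogeneous subsets of $E$ for the two colours of the associated clopen partition; these are c.c.c.\ while $P_0\times P_1$ is not, since the diagonal conditions $\langle\{x\},\{x\}\rangle$ $(x\in E)$ are pairwise incompatible. Presenting the incompatibility relation of each $P_i$ as an open graph on the second countable space $[E]^{<\gw}$ of its conditions, Theorem~\ref{yproperpreservationtheorem}(3) keeps $P_i$ c.c.c.\ in $V^P$ (an uncountable antichain there would be covered by countably many ground-model antichains, one of them uncountable), whereas the ground-model uncountable antichain of $P_0\times P_1$ persists because incompatibility of finite conditions is absolute. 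Hence in $V^P$ the posets $P_0,P_1$ are c.c.c.\ with non-c.c.c.\ product, so c.c.c.\ is not productive. The two remaining obstacles here are the entangled-set constructions themselves---producing under CH a clopen graph on an $\aone$-sized subspace with no uncountable clique and no uncountable independent set, together with the associated pair of homogeneity posets whose product is not c.c.c.---and the verification that in each case the relevant incompatibility or non-edge relation is open on the chosen second countable space, so that Theorem~\ref{yproperpreservationtheorem}(3) can be applied.
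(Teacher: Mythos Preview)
Your treatment of (1)--(3), (5), and (6) is essentially the paper's. For (5) and (6) the paper is terser: it simply notes that entangledness of a set $A\subset\R$ is a statement about the nonexistence of uncountable anticliques in certain open graphs on $A^n$, so Theorem~\ref{yproperpreservationtheorem}(3) preserves entangledness outright through the Y-proper iteration, and then quotes Todorcevic's result that an entangled linear order directly yields both $\neg$OCA and a c.c.c.\ non-productive pair. Your more explicit unpacking (preserving the individual graphs $H$ and $\bar H$, and the c.c.c.\ of the two homogeneity posets separately) is correct in spirit and arrives at the same conclusion by the same mechanism.

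The substantive divergence, and the one genuine gap, is in (4). You propose to run the BPFA-style walks/oscillation coding forcing and to check that it is Y-proper ``by the methods of Section~\ref{generalsection}''; but that section supplies no new verification tools---it only axiomatizes the iteration machinery---and the finite-side-condition coding posets of Moore or Caicedo--Veli\v{c}kovi\'c do not fall under the ideal-based or PID templates of Theorems~\ref{idealbasedtheorem} and~\ref{pidtheorem}. So the step you flag as ``the real content'' is left entirely open. The paper avoids this verification altogether by a different decomposition: from (1), YPFA implies PID, and PID gives $\mathfrak{b}\leq\aleph_2$, so (together with your lower bound) $\mathfrak{b}=\aleph_2$. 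To get $\mathfrak{c}=\mathfrak{b}$ one fixes an unbounded $\gw_2$-scale $\vec y$ and, for each $x\in\cantor$, applies YPFA to the two-step iteration $P_x^0*\dot P_x^1$, where $P_x^0$ is the $\in$-collapse of $\aleph_2$ to $\aleph_1$ by finite $\in$-chains of countable models (strongly proper, hence Y-proper, and preserving unboundedness of $\vec y$), and $\dot P_x^1$ is the Baumgartner-style c.c.c.\ poset coding $x$ into a club in $\gw_2^V$ via $\vec y$, which is Y-c.c. Both steps thus land in classes whose Y-properness is already established, so no new verification is needed.
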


\noindent Items (4) and (6) are due to Todorcevic.

\begin{proof}
YPFA implies PID since the PID posets are Y-proper by Theorem~\ref{pidtheorem}. YPFA implies the five cofinal types statement since the posets used for it  are ideal-based \cite{z:keeping} 
and the ideal-based posets are Y-proper by Theorem~\ref{idealbasedtheorem}. Todorcevic remarked that the classification of cofinal types of size $\aleph_1$ is a consequence of the conjunction of PID and $\mathfrak{p}>\aleph_1$, which are both consequences of YPFA.

The proof of $\mathfrak{c}=\aleph_2$ follows the PFA argument with a small change. PID implies that $\mathfrak{b}\leq\aleph_2$ \cite{todorcevic:bsl}, and so YPFA implies $\mathfrak{b}=\aleph_2$. Similarly to the oldest proof that PFA implies $\mathfrak{c}=\aleph_2$, the argument is concluded by showing that $\mathfrak{c}=\mathfrak{b}$. This is quite involved and we only outline the main points. Fix a modulo finite increasing, unbounded sequence $\vec y=\langle y_\ga\colon\ga\in\gw_2\rangle$ of functions in the Baire space. For every $x\in\cantor$, consider a poset $P_x$ which is the iteration $P_x^0*\dot P_x^1$. The first step of the iteration is the $\in$-collapse of $\aleph_2$ to $\aleph_1$; the main point is that it is Y-proper and preserves unbounded sequences. The second step of the iteration uses $\vec y$ and the fact that it remains unbounded to code the point $x\in\cantor$ into a closed unbounded subset of $\gw_2^V$ via a c.c.c.\ poset; the main point is that it is in fact even Y-c.c. Thus, the iteration $P_x$ is Y-proper.
An application of YPFA to the poset $P_x$ yields coding of  the point $x$ by an ordinal $<\gw_2$, proving that $\mathfrak{c}=\aleph_2$. Details (except for showing that $\dot P_x^1$ is forced to be Y-c.c.) can be found in \cite[Theorem 3.16]{bekkali:set}.

For (5) and (6), Todorcevic supplied an argument using entangled linear orders. Suppose that there is a supercompact cardinal and
the continuum hypothesis holds. Then, there is a set $A$ of reals of size $\aleph_1$ which forms an entangled linear ordering \cite[Theorem 1]{todorcevic:entangled}. Note that entangledness is a statement about nonexistence of uncountable anticliques in a certain graph on $A^n$ and the graph in the case that $A$ is a set of reals is open. Force YPFA with a Y-proper ordering.  In the resulting model, $A$ is still entangled, thus OCA fails, and also by \cite[Theorem 6]{todorcevic:entangled} c.c.c.\ is not productive.
\end{proof}

\bibliographystyle{plain}
\bibliography{red-bib}

\end{document}